\renewcommand{\epsilon}{\varepsilon}
\newcommand{\newsection}[1]
{\subsection{#1}\setcounter{theorem}{0} \setcounter{equation}{0}
\par\noindent}
\newtheorem{theorem}{Theorem}
\newtheorem{lemma}[theorem]{Lemma}
\newtheorem{corr}[theorem]{Corollary}
\newtheorem{proposition}[theorem]{Proposition}
\newtheorem{deff}[theorem]{Definition}
\newcommand{\bth}{\begin{theorem}}
\newcommand{\ble}{\begin{lemma}}
\newcommand{\bcor}{\begin{corr}}
\newcommand{\bdeff}{\begin{deff}}
\newcommand{\bprop}{\begin{proposition}}
\newcommand{\ele}{\end{lemma}}
\newcommand{\ecor}{\end{corr}}
\newcommand{\edeff}{\end{deff}}
\newcommand{\eprop}{\end{proposition}}
\newcommand{\cd}{\, \cdot\, }
\newcommand{\Rn}{{\mathbb R}^n}
\newcommand{\la}{\lambda}
\newcommand{\e}{\varepsilon}
\newcommand{\supp}{\text{supp }}
\renewcommand{\Pi}{\varPi}
\renewcommand{\Re}{\rm{Re} \,}
\renewcommand{\epsilon}{\varepsilon}
\newcommand{\dist}{{{\rm dist}}}
\newcommand{\R}{{\mathbb R}}
\newcommand{\1}{{\rm 1\hspace*{-0.4ex}%
\rule{0.1ex}{1.52ex}\hspace*{0.2ex}}}
\newcommand{\tb}{\widetilde \beta}
\begin{document}

\title
{Uniform Sobolev estimates in $\mathbb{R}^{n}$ involving singular potentials}
%
%
%
%
%
%
\keywords{Schr\"odinger equation, uniform Sobolev estimates, quasimodes}
\subjclass[2010]{58J50, 35P15}

\thanks{The authors were supported in part by the NSF (NSF Grant DMS-1953413). }

\author[]{Xiaoqi Huang}
\address[X.H.]{Department of Mathematics,  Johns Hopkins University,
Baltimore, MD 21218}
\email{xhuang49@math.jhu.edu}

\author[]{Christopher D. Sogge}
\address[C.D.S.]{Department of Mathematics,  Johns Hopkins University,
Baltimore, MD 21218}
\email{sogge@jhu.edu}

\begin{abstract}
We generalize the Stein-Tomas~\cite{TomasRestriction} $L^2$-restricition theorem and the
uniform Sobolev estimates of Kenig, Ruiz and the second author~\cite{KRS} by allowing critically
singular potential.  We also obtain Strichartz estimates for Schr\"odinger and wave operators with
such potentials.  Due to the fact that there may be nontrivial eigenfunctions we are required to make
certain spectral assumptions, such as assuming that the solutions only involve sufficiently large frequencies.
 \end{abstract}

\maketitle
\setcounter{secnumdepth}{3}
   
\newsection{Introduction and main results}

The main purpose of this paper is to extend the uniform Sobolev inequalities in $\mathbb{R}^n$ of Kenig, Ruiz and the second author \cite{KRS}, as well as the $L^2$-restricition theorem of Stein and Tomas~\cite{TomasRestriction} to include Schr\"odinger operators,
\begin{equation}\label{def0}
H_V=-\Delta+V(x)
\end{equation}
with critically singular potentials $V$, which are assumed to be real-valued and 
\begin{equation}\label{def}
V\in L^{n/2}(\mathbb{R}^n).
\end{equation}

In a recent work of the authors with Blair and Sire \cite{blair2020uniform}, the same problem was considered for compact Riemannian manifolds, which generalizes results in
\cite{DKS}, \cite{BSSY} and \cite{Shya}. Also, in an earlier work of Blair, Sire and the second author \cite{BSS}, quasimode and related spectral projection estimates were discussed in
 $\R^n$ under the additional assumption that $V\in \mathcal{K}$, the Kato class. The spaces $L^{n/2}$ and ${\mathcal K}$ have the same
scaling properties, and both obey the scaling law of the Laplacian, which accounts for their criticality. 
It is natural to assume that $V\in {\mathcal K}$ when dealing with large exponents, and, as was shown in \cite{BSS}, this assumption is
needed to obtain optimal sup-norm estimates for quasimodes.
However, motivated by the results in \cite{blair2020uniform}, for the smaller exponents arising in uniform Sobolev inequalities, it is natural to merely assume that $V$ in $L^{n/2}$, which we shall discuss below in Theorem~\ref{unifSob}.

As was shown in the appendix of \cite{blair2020uniform}, if $V\in L^{n/2}$ then $H_V$ is essentially self-adjoint and bounded from below. As a result, we shall assume throughout that 
\begin{equation}\label{below}
\text{Spec } H_V\subset [-N_0, +\infty), \,\,\text{i.e.},\,\, H_V+N_0\ge 0,
\end{equation}
for some fixed positive number $N_0$ which depends on $V$.

The uniform Sobolev estimates and quasimode estimates that we can obtain are the following.
\begin{theorem}\label{unifSob}
Let $n\ge 3$ and suppose that
\begin{equation}\label{1.3}
\min\bigl(q, \, p(q)'\bigr)> \tfrac{2n}{n-1},
\quad \text{and } \, \tfrac1{p(q)}-\tfrac1q=\tfrac2n.
\end{equation}
 Then
if $V\in L^{n/2}(\mathbb{R}^n)$ is real-valued and $\Lambda$, $\delta>0$ are fixed constants with $\Lambda=\Lambda(q, n, V)$ sufficiently large, we have the uniform bounds
\begin{equation}\label{1.4}
\|\bigl(H_V-\zeta\bigr)^{-1}u\|_q\lesssim \bigl\|u\bigr\|_{p(q)}, \quad \text{if } \, 
\, \, \zeta\in \Omega_\delta,
\end{equation}
where
\begin{equation}\label{1.5}
 \Omega_\delta = \{\zeta \in {\mathbb C}\,\setminus\,[-N_0,+\infty): \,\,
\mathrm{dist }(\zeta, [-N_0,+\infty))
 \ge \delta
\, \, 
\text{if } \, \, \Re \zeta < \text{$\Lambda^2$} \}.
\end{equation} 
Also, suppose that 
\begin{equation}\label{1.6}
\tfrac{2(n+1)}{n-1}\le q\leq \tfrac{2n}{n-4}, \,\,\,\text{if}\,\,\, n\ge 5,\,\,\,\text{or}\,\,\,\tfrac{2(n+1)}{n-1}\le q< \infty,\,\,\ \text{if}\,\,\, n=3, 4.
\end{equation}
Then if $u\in\text{Dom}(H_V)$, for any $0<\e<\la/2$, we have
\begin{equation}\label{qm}
\|u\|_q\lesssim \la^{n(1/2-1/q)-3/2}\e^{-1/2}\|(H_V-\la^2+i\la\e) u\|_2,
\, \, \, \text{if } \, \la\ge \Lambda.
\end{equation}
\end{theorem}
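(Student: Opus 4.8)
The plan is to derive \eqref{1.4} and \eqref{qm} from the corresponding estimates for $H_0=-\Delta$ by a Birman--Schwinger perturbation. The free inputs are (i) the Kenig--Ruiz--Sogge uniform Sobolev inequalities $\|(H_0-\zeta)^{-1}\|_{L^{p(q)}\to L^q}\lesssim 1$, uniformly in $\zeta\in\mathbb C$, which hold exactly on the range \eqref{1.3}; and (ii) the Stein--Tomas restriction theorem, which by almost-orthogonality in the spectral variable gives the free quasimode bound $\|(H_0-\la^2+i\la\e)^{-1}\|_{L^2\to L^q}\lesssim\la^{n(1/2-1/q)-3/2}\e^{-1/2}$ for all $q\ge\tfrac{2(n+1)}{n-1}$ and $0<\e<\la/2$. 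Writing $V=w\bar w$ with $w=|V|^{1/2}\in L^n$ and $\bar w=(\sgn V)\,w$, and setting $R_0(\zeta)=(H_0-\zeta)^{-1}$, $M(\zeta)=\bar w R_0(\zeta)w$, the identity
\begin{equation*}
(H_V-\zeta)^{-1}=R_0(\zeta)-R_0(\zeta)\,w\,\bigl(I+M(\zeta)\bigr)^{-1}\bar w\,R_0(\zeta)
\end{equation*}
reduces everything to (a) free estimates for the outer factors $R_0(\zeta)w$ and $\bar wR_0(\zeta)$, and (b) a bound on $\|(I+M(\zeta))^{-1}\|_{L^2\to L^2}$ uniform over $\zeta\in\Omega_\delta$ (and over $\zeta=\la^2-i\la\e$, $\la\ge\Lambda$).

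I would treat the outer factors by Hölder with $w\in L^n$: at the Sobolev endpoint $q=\tfrac{2n}{n-2}$ this gives $w:L^2\to L^{2n/(n+2)}$ and $\bar w:L^{2n/(n-2)}\to L^2$, which combine with the endpoint Kenig--Ruiz--Sogge bound to control $R_0(\zeta)w:L^2\to L^{2n/(n-2)}$ and $\bar wR_0(\zeta):L^{2n/(n+2)}\to L^2$, and hence prove \eqref{1.4} at $q=\tfrac{2n}{n-2}$. The remaining $q$ in \eqref{1.3} I would obtain by running the same scheme with $\bigl(I+VR_0(\zeta)\bigr)^{-1}$ acting on $L^{p(q)}$ --- here $\|VR_0(\zeta)\|_{L^{p(q)}\to L^{p(q)}}\le\|V\|_{L^{n/2}}\|R_0(\zeta)\|_{L^{p(q)}\to L^q}$ by Hölder --- together with the self-duality $q\leftrightarrow p(q)'$ of \eqref{1.4}, which is an involution of the interval $\bigl(\tfrac{2n}{n-1},\tfrac{2n}{n-3}\bigr)$ fixing $\tfrac{2n}{n-2}$, and interpolation. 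For \eqref{qm} the same identity is used with $\zeta=\la^2-i\la\e$: the factor $\bar wR_0(\zeta):L^2\to L^2$ is controlled using $\bar w\in L^n$ and $\|R_0(\la^2-i\la\e)\|_{L^2\to L^{2n/(n-2)}}\lesssim\la^{-1/2}\e^{-1/2}$ (a case of the free quasimode bound), while $R_0(\zeta)w:L^2\to L^q$ carries the Stein--Tomas factor; the ceiling $q\le\tfrac{2n}{n-4}$ in \eqref{1.6} for $n\ge5$ is exactly what makes the Hölder bound $\|Vu\|_2\lesssim\|V\|_{L^{n/2}}\|u\|_{2n/(n-4)}$, and hence the absorption disposing of the correction term, available --- for $n=3,4$ one uses $\|Vu\|_2\lesssim\|V\|_{L^{n/2}}\|u\|_{2n/(n-2)}$ instead, together with the case $q=\tfrac{2n}{n-2}$ of \eqref{qm}.

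The hard part will be (b). By Hardy--Littlewood--Sobolev, $M(\zeta)$ is a compact operator on $L^2$ with $\|M(\zeta)\|_{L^2\to L^2}\lesssim\|V\|_{L^{n/2}}$ uniformly, and $\zeta\mapsto M(\zeta)$ extends continuously, in operator norm, to the boundary values $\zeta=\tau\pm i0$, $\tau>0$ --- the limiting absorption principle for the free resolvent conjugated by $L^n$-weights, which I would prove by density from bounded compactly supported potentials. It then remains to show $I+M(\zeta)$ is injective on $L^2$ for every $\zeta\in\Omega_\delta$. For $\zeta\notin[0,\infty)$ this is, via the Birman--Schwinger principle, precisely the hypothesis $\text{Spec}\,H_V\subset[-N_0,+\infty)$ from \eqref{below} together with the $\delta$-separation built into $\Omega_\delta$; a Neumann series disposes of $\Re\zeta$ very negative or $|\Im\zeta|$ very large, leaving a compact set of $\zeta$'s on which $(I+M(\zeta))^{-1}$ is continuous, hence bounded. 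For $\zeta$ approaching $[\Lambda^2,\infty)$, where $\Omega_\delta$ imposes no separation, I would split $V$ (equivalently $w$) into a bounded compactly supported part and an $L^{n/2}$-small (resp. $L^n$-small) part; the local-smoothing gain $\la^{-1}$ for the first and the endpoint Kenig--Ruiz--Sogge bound for the second then give $\limsup_{\tau\to\infty}\|M(\tau\pm i0)\|_{L^2\to L^2}<\tfrac12$, so that for $\Lambda=\Lambda(q,n,V)$ sufficiently large $\|M(\zeta)\|_{L^2\to L^2}<\tfrac12$ on $\{\Re\zeta\ge\Lambda^2\}$ and invertibility is immediate (in particular $H_V$ has no eigenvalues or resonances in $[\Lambda^2,\infty)$). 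For general, non-small $V\in L^{n/2}$ I would reduce --- by the same splitting and one further absorption step --- to the case of a bounded compactly supported potential, where compactness of the relevant operators (after discarding the harmless smooth off-diagonal part of the resolvent kernel) is transparent and a standard Fredholm alternative --- compactness, norm-continuity in $\zeta$, injectivity by unique continuation --- closes the estimate; crucially, all constants depend on $V$ only through $\|V\|_{L^{n/2}}$ (and, in the separated region, through the spectral data already fixed by \eqref{below}), which is what legitimizes the final absorption.

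Finally, granting these uniform resolvent bounds, \eqref{qm} follows at $q=\tfrac{2n}{n-2}$ directly from the Birman--Schwinger identity, at the Stein--Tomas endpoint $q=\tfrac{2(n+1)}{n-1}$ from the perturbed Stein--Tomas bound, and for the remaining exponents in \eqref{1.6} by interpolation (the power $n(1/2-1/q)-3/2$ being affine in $1/q$), in the spirit of \cite{blair2020uniform}. The single genuinely delicate point throughout is the \emph{uniform} invertibility of $I+M(\zeta)$ on $L^2$ over all of $\Omega_\delta$ at once --- reconciling the regime of bounded $\zeta$ separated from the spectrum, the regime $|\zeta|\to\infty$ off the positive axis, and the regime $\zeta\to[\Lambda^2,\infty)$ --- with every constant independent of $\zeta$; this is also exactly where the largeness of $\Lambda$ and the spectral hypothesis \eqref{below} enter.
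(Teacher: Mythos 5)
Your proposal takes a genuinely different route from the paper. The paper works directly from the second resolvent formula $(H_V-\zeta)^{-1}=R_0(\zeta)-R_0(\zeta)V(H_V-\zeta)^{-1}$ and closes a bootstrap on $L^q$ via the key smallness estimate (Theorem~\ref{lemm}): $\|R_0(\la^2-i\e\la)Vf\|_q\le\tfrac12\|f\|_q$ for $\la\ge\Lambda$, which is proved by splitting $V=V_{\le N}+V_{>N}$ and decomposing the free resolvent kernel into dyadic annuli, with oscillatory-integral bounds (essentially Stein's theorem via Lemma 5.4 of \cite{sogge86}) for the far pieces. Your scheme instead symmetrizes: $V=w\bar w$, the Birman--Schwinger factorization, and uniform control of $(I+M(\zeta))^{-1}$ on $L^2$. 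Both are legitimate; the paper's approach is more concrete, avoids the limiting absorption principle and Fredholm theory, and (as the authors observe in the Remark after Theorem~\ref{lemm}) extends naturally to $V$ in $L^{n/2}_{\text{loc}}$ under Ionescu--Jerison type conditions. Your approach is in the spirit of Mizutani's work cited in the paper, and front-loads the spectral-theoretic content into the single operator inequality $\sup_{\Omega_\delta}\|(I+M(\zeta))^{-1}\|_{L^2\to L^2}<\infty$.

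That said, there are concrete gaps in the sketch that would need closing. First, in the high-frequency regime you need $\|M(\zeta)\|_{L^2\to L^2}<\tfrac12$ for $\Re\zeta\ge\Lambda^2$; the endpoint KRS bound only gives $\|M(\zeta)\|\lesssim\|V\|_{n/2}$, which is \emph{scale invariant} on the Sobolev line $\tfrac1p-\tfrac1q=\tfrac2n$ and does not decay in $\la$. You would therefore need the splitting $w=w_1+w_2$ spelled out with the weighted local-smoothing bound $\|w_1 R_0(\la^2\pm i0)w_1\|_{L^2\to L^2}\lesssim_{w_1}\la^{-1}$ for the bounded compactly supported piece, plus smallness of the cross terms; that is essentially the content of the paper's dyadic kernel argument in a different guise, not a free lunch. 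Second, your ``Neumann series'' claim for $\Re\zeta\to-\infty$ runs into the same scale-invariance: $\|\bar w(-\Delta+K)^{-1}w\|_{L^2\to L^2}$ is not \emph{a priori} small as $K\to\infty$ on the Sobolev line; you need the density/approximation argument you gesture at for the LAP, applied here as well. Third, your claim that \eqref{qm} ``for the remaining exponents in \eqref{1.6} follows by interpolation'' between $q=\tfrac{2(n+1)}{n-1}$ and $q=\tfrac{2n}{n-2}$ only covers $\tfrac{2(n+1)}{n-1}\le q\le\tfrac{2n}{n-2}$; the upper range $\tfrac{2n}{n-2}<q\le\tfrac{2n}{n-4}$ (for $n\ge5$) cannot be reached by interpolation downward --- this is precisely what the paper's Theorem~\ref{big} is for, using the Littlewood--Paley localization $\beta(P/\la)$ together with Sobolev embedding off the annulus $|\xi|\approx\la$ and a fresh absorption; your one-line mention of $\|Vu\|_2\lesssim\|V\|_{n/2}\|u\|_{2n/(n-4)}$ is pointing at the right Hölder exponent but the argument itself is missing. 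Finally, the injectivity of $I+M(\zeta)$ on $L^2$ for $\zeta$ in the $\delta$-separated region needs more than invoking ``unique continuation'': one needs to verify that a nontrivial kernel element $\phi$ yields $\psi=R_0(\zeta)w\phi$ which actually lies in $\text{Dom}(H_V)\subset L^2$ (the KRS map lands in $L^{2n/(n-2)}$, not $L^2$, when $\zeta$ approaches $[0,\infty)$), before \eqref{below} can be deployed.
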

Here, $\text{Dom}(H_V)$ denotes the domain of $H_V$ and $N_0$ is as in \eqref{below}.   Also, $r'$ denotes the conjugate exponent for $r$, i.e.,
the one satisfying $1/r+1/r'=1$.  
Additionally, we are using the notation that $A\lesssim B$ means that $A$ is bounded from above by a constant times $B$.  The implicit constant might depend on the parameters involved, such as $n$, $q$ and $V$, but not on $\zeta$, $\la$ or $\e$ in \eqref{1.4} and \eqref{qm}.

The condition \eqref{1.3} on the range of exponents was shown to be
be sharp in \cite{KRS}.
The gap condition in \eqref{1.3} that $ \tfrac1{p(q)}-\tfrac1q=\tfrac2n$, follows from scaling considerations,
while the necessity of the first part of \eqref{1.3}  is related to the fact that the Fourier transform of surface measure on the
sphere in $\Rn$ is not in $L^q(\Rn)$ if $q\le \tfrac{2n}{n-1}$. The condition \eqref{1.6} on the exponents is also sharp, since it agrees with conditions in the standard quasimode estimates when $V\equiv 0$ except for the case $n=3,\, q=\infty$. In that case, \eqref{qm} may not be valid if we only assume $V\in L^{n/2}(\R^n)$ due to the possible existence of unbounded eigenfunctions for the operator $H_V$.  See e.g., \cite{BSS} for more details.

\eqref{1.4}--\eqref{1.5} imply that we have uniform $L^{p(q)}\to L^q$ operator bounds for
$(H_V-\zeta)^{-1}$ if $\text{Re }\zeta$ is large and $\text{Im }\zeta\ne0$, which is a natural analog of the uniform Sobolev
estimates of Kenig, Ruiz and the second author \cite{KRS}, and, in the special case where $V\equiv0$ is equivalent
to the results in \cite{KRS}. Inequalities of this type, as well as weighted $L^2$-estimates for $(H_V-\zeta\bigr)^{-1}$, have been extensively studied for different types of potentials; see, e.g, \cite{jensen1979spectral}, \cite{rodnianski2015effective}, \cite{bouclet2018uniform}, \cite{mizutani2020uniform}. In particular, in \cite{mizutani2020uniform}, it is proved by Mizutani that \eqref{1.4}--\eqref{1.5} hold for $V\in L^{n, \infty}_0 (\R^n)$, where $L^{n, \infty}_0$ denotes the completion of $C_0^\infty$ functions under the $ L^{n, \infty}_0$ norm. Although $L^{n/2}	\hookrightarrow  L^{n, \infty}_0$  , the proof of \eqref{1.5} is based on a different method and we shall discuss at the end of section 2 that how we can modify the proof there to further weaken the conditions on $V$.

By results in \cite{SoggeZelditchQMNote} the bounds in 
\eqref{qm} are equivalent to the following spectral projection bounds
\begin{equation}\label{1.9}
\|\chi^V_{[\la, \la+\e]}\|_{L^2(\R^n)\to L^q(\R^n)}
\lesssim \e^{1/2}\la^{n(1/2-1/q)-1/2}, \quad \forall \,\,0<\e<1,\,\,
\la \ge \Lambda,
\end{equation}
for some $\Lambda$ large enough and $q$ as in \eqref{1.6}, if 
$\chi^V_{[\la, \la+\e]}$ denotes the spectral projection operator which projects onto the part of the spectrum of $H_V$
 in the corresponding shrinking intervals 
$[\la^2, (\la+\e)^2]$.
If in addition to \eqref{def} we assume that $V$
is in the Kato class then we 
also have \eqref{1.9}, as in the
$V\equiv 0$ case 
for all $p\ge\tfrac{2(n+1)}{n-1}$, which is equivalent to the Stein-Thomas restriction theorem for the sphere (see, i.e., Chapter 5 in \cite{SFIO2}).

In \cite{BSS}, analogs of \eqref{1.9} were obtained for all $\la\ge0$ under the assumption that the $L^{n/2}$ norm of $V$ is small. In that case, using a simple argument involving Sobolev estimates, it is not hard 
to show that $H_V=-\Delta+V$ itself defines a positive self-adjoint operator which can not have negative spectrum. We generalize the results in \cite{BSS} by removing the smallness condition at the cost 
of ignoring lower part of the spectrum.

By letting $\e\rightarrow 0$, inequalities like \eqref{1.9} certainly implies the absence of embedded eigenvalues in the corresponding region of spectrum. As far as the problem of absence of 
eigenvalues is considered, the $L^{n/2}$ norm here is critical, since in \cite{koch2002}, Koch and Tataru showed that if $q<n/2$ there are
examples of compactly supported potentials $V\in L^q$ and smooth compact supported functions $u$ such that $H_Vu=0$. In the opposite direction, Ionescu and Jerison \cite{2003absence} proved that $H_V$ does not admit positive eigenvalues for $V\in L^{n/2}_{loc}$ with certain decay conditions as $|x|\rightarrow \infty$.

Since $H_V$ does not admit large eigenvalues, if $E^\prime(\la)$ denotes the density of the spectral measure associated to the operator $H_V$, with $\int E^\prime(\la)d\la$ being the resolution of identity, by Stone's formula,
$$E^\prime(\la)=\frac{1}{2\pi i}\lim_{\e\rightarrow 0} \big((H_V-\la-i\e)^{-1}-(H_V-\la+i\e)^{-1}\big), \,\,\,\text{if}\,\,\,\la>\Lambda^2,
$$
which is equivalent to 
$$E^\prime(\la)=\frac{1}{\pi}\lim_{\e\rightarrow 0} \,\e\cdot(H_V-\la-i\e)^{-1}(H_V-\la+i\e)^{-1}.
$$
Thus, for $\la, \,\e$ defined as above, by applying \eqref{qm} with $\la^\prime=\la^{1/2}$, $\e^\prime=\e/\la^{1/2}$ and duality, we have the following restriction type estimates
\begin{corr}
Suppose that $q$ satisfies \eqref{1.6}, then we have 
\begin{equation}
\|E^\prime(\la) f\|_{L^q(\R^n)}\le C\la^{\frac{n}{2}(\frac{1}{q^\prime}-\frac{1}{q})-1} \|f\|_{L^{q^\prime}(\R^n)}\,\,\,\text{if}\,\,\,\la>\Lambda^2.
\end{equation}
\end{corr}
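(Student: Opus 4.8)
The plan is to read the corollary off from the quasimode bound \eqref{qm} together with Stone's formula, exactly as set up in the paragraph preceding the statement. First I would rescale \eqref{qm}: fix $\la>\Lambda^2$ and apply \eqref{qm} with $\la'=\sqrt\la$ and $\e'=\e/\sqrt\la$, which is admissible (i.e.\ $0<\e'<\la'/2$ and $\la'\ge\Lambda$) once $\e>0$ is small. Since $\la'^2=\la$ and $\la'\e'=\e$, and since
$(\sqrt\la)^{\,n(1/2-1/q)-3/2}(\e/\sqrt\la)^{-1/2}=\la^{\alpha}\e^{-1/2}$ with
$\alpha:=\tfrac n2\bigl(\tfrac12-\tfrac1q\bigr)-\tfrac12$,
taking $u=(H_V-\la+i\e)^{-1}g$ in \eqref{qm} (note $u\in\mathrm{Dom}(H_V)$ as $\la\pm i\e\notin\mathrm{Spec}\,H_V$) yields the $L^2\to L^q$ operator bound
\[
\bigl\|(H_V-\la+i\e)^{-1}\bigr\|_{L^2(\R^n)\to L^q(\R^n)}\lesssim \la^{\alpha}\e^{-1/2}.
\]
Because $H_V$ has real coefficients, complex conjugation gives the same bound for $(H_V-\la-i\e)^{-1}$, and taking adjoints (using that $H_V$ is self-adjoint, so $((H_V-z)^{-1})^*=(H_V-\bar z)^{-1}$, and that $1<q<\infty$ in the range \eqref{1.6}) furnishes in addition
$\bigl\|(H_V-\la\mp i\e)^{-1}\bigr\|_{L^{q'}(\R^n)\to L^2(\R^n)}\lesssim \la^{\alpha}\e^{-1/2}$.

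Next I would exploit the factorization in Stone's formula,
\[
E'(\la)=\tfrac1\pi\lim_{\e\to0}\,\e\,(H_V-\la-i\e)^{-1}(H_V-\la+i\e)^{-1},\qquad \la>\Lambda^2 .
\]
Composing along $L^{q'}\xrightarrow{(H_V-\la+i\e)^{-1}}L^2\xrightarrow{(H_V-\la-i\e)^{-1}}L^q$ and inserting the prefactor $\e$ gives, uniformly for all small $\e>0$,
\[
\bigl\|\,\e\,(H_V-\la-i\e)^{-1}(H_V-\la+i\e)^{-1}f\bigr\|_q\;\lesssim\; \e\cdot\la^{\alpha}\e^{-1/2}\cdot\la^{\alpha}\e^{-1/2}\,\|f\|_{q'}\;=\;\la^{2\alpha}\,\|f\|_{q'}.
\]
Since $2\alpha=n(\tfrac12-\tfrac1q)-1=\tfrac n2\bigl(\tfrac1{q'}-\tfrac1q\bigr)-1$, letting $\e\to0$ and using that the right-hand side is independent of $\e$ (so the bound passes to the limit, e.g.\ by weak lower semicontinuity of the $L^q$-norm applied to the operators converging to $E'(\la)$ on a dense class) produces exactly
$\|E'(\la)f\|_{L^q(\R^n)}\le C\la^{\frac n2(\frac1{q'}-\frac1q)-1}\|f\|_{L^{q'}(\R^n)}$ for $\la>\Lambda^2$.

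There is no substantial obstacle: the corollary is essentially a dyadic-rescaled, squared, and dualized restatement of \eqref{qm}. The only points that need a little care are the exponent bookkeeping in the substitution $\la'=\sqrt\la,\ \e'=\e/\sqrt\la$, the check that this substitution is admissible for small $\e$ when $\la>\Lambda^2$ (which uses $\la'\ge\Lambda$ and $\e'<\la'/2$), and the justification that the uniform-in-$\e$ bound survives the limit defining $E'(\la)$ — which is immediate because the estimate holds with a constant independent of $\e$. One should also remark that the hypothesis that $q$ lie in the range \eqref{1.6} is precisely what makes \eqref{qm} available, so no further restriction on $q$ is incurred.
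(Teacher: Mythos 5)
Your proposal is correct and follows exactly the route the paper sketches in the paragraph preceding the corollary: rescale \eqref{qm} via $\la'=\sqrt\la$, $\e'=\e/\sqrt\la$ to bound $(H_V-\la\pm i\e)^{-1}$ on $L^2\to L^q$ and, by adjointness, on $L^{q'}\to L^2$, then compose through the factored Stone's formula and let $\e\to0$. The exponent bookkeeping ($2\alpha=\tfrac n2(\tfrac1{q'}-\tfrac1q)-1$) and the admissibility check ($\la'\ge\Lambda$ iff $\la\ge\Lambda^2$, $\e'<\la'/2$ for small $\e$) are carried out correctly.
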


Among other things, if $\chi^V_{(-\infty, \Lambda]}$ denotes the spectral projection onto the interval $(-\infty, \Lambda^2)$ for $H_V$, for $q$ satisfying \eqref{1.6}, the quasimode estimates \eqref{qm} also implies
\begin{equation}\label{1.10}
\|\chi^V_{(-\infty, \Lambda]}\|_{L^2(\R^n)\to L^q(\R^n)} \lesssim 1,
\end{equation}
since $H_V$ is bounded from below.
Using \eqref{1.9} and \eqref{1.10}, it is straightforward to adapt the proof of Theorem 8.1 and Theorem 9.5 in \cite{BSS} to obtain the following
\begin{theorem}\label{strthm}  Let $n\ge3$ and fix
$V\in L^{n/2}(\R^n)$.  Let $u$ be the solution of
\begin{equation}\label{1.11}
\begin{cases}
\bigl(\partial_t^2-\Delta+V(x)\bigr)u=0
\\
u|_{t=0}=f_0, \quad \partial_tu|_{t=0}=f_1.
\end{cases}
\end{equation}
Then for $p_c=\frac{2(n+1)}{n-1}$, we have
\begin{equation}\label{1.12}
\|u\|_{L^{p_c}([0,1]\times \R^n)}\le C_V\bigl(\|(i+H_V)^{1/4}f_0\|_{L^2(\R^n)}
+\|(i+H_V)^{-1/4}f_1\|_{L^2(\R^n)}\bigr).
\end{equation}
Additionally, suppose that $\chi\in C^\infty(\R)$ is a smooth function satisfying
\begin{equation}\label{high1}
\chi(\la)=1\,\,\,\text{for}\,\, \,\la\ge 1\,\,\,\text{and}\,\,\,\chi(\la)=0\,\,\,\text{for}\,\, \,\la\le 1/2.
\end{equation}
Then 
\begin{equation}\label{1.13}
\|\chi(H_V/M)u\|_{L^{p_c}(\R\times \R^n)}\le C_V\bigl(\|(i+H_V)^{1/4}f_0\|_{L^2(\R^n)}
+\|(i+H_V)^{-1/4}f_1\|_{L^2(\R^n)}\bigr),
\end{equation}
assuming that $M$ is a large enough constant which depends on $V$.
\end{theorem}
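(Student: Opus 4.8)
\emph{Proof sketch.} The plan is to split the solution using the spectral projections of $H_V$ onto $(\Lambda^2,\infty)$ and onto $[-N_0,\Lambda^2]$, and to reduce the high-frequency part to the half-wave propagators $e^{\pm it\sqrt{H_V}}$ (on $\{H_V>\Lambda^2\}$ the operator $\sqrt{H_V}$ is genuinely positive and self-adjoint, so these are honest unitary groups). The low part of the solution is spectrally supported in the compact window $[-N_0,\Lambda^2]$; there $(i+H_V)^{\pm1/4}$ is comparable to the identity, the operators $\cos(t\sqrt{H_V})$ and $\tfrac{\sin(t\sqrt{H_V})}{\sqrt{H_V}}$ are bounded on $L^2$ with constant $O(1)$ for $t\in[0,1]$, and \eqref{1.10} upgrades this $L^2$ bound to $L^{p_c}$; integrating in $t\in[0,1]$ disposes of the low part for \eqref{1.12}, while for \eqref{1.13} the low part is absent since $\chi(H_V/M)$ with $M\ge 2\Lambda^2$ kills it. Writing $\cos(t\sqrt{H_V})=\tfrac12(e^{it\sqrt{H_V}}+e^{-it\sqrt{H_V}})$ and $\tfrac{\sin(t\sqrt{H_V})}{\sqrt{H_V}}=\tfrac1{2i}H_V^{-1/2}(e^{it\sqrt{H_V}}-e^{-it\sqrt{H_V}})$ on $\{H_V>\Lambda^2\}$, and using there that $\|(i+H_V)^{1/4}H_V^{-1/2}f_1\|_2\lesssim\|(i+H_V)^{-1/4}f_1\|_2$, everything reduces to the single estimate
\[
\|e^{it\sqrt{H_V}}g\|_{L^{p_c}(I\times\R^n)}\lesssim\|(i+H_V)^{1/4}g\|_{L^2(\R^n)},\qquad \text{$g$ spectrally supported in $(\Lambda^2,\infty)$,}
\]
with $I=[0,1]$ for \eqref{1.12} and $I=\R$ for \eqref{1.13}.

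For this I would make a Littlewood--Paley decomposition $g=\sum_{\lambda\gtrsim\Lambda}\beta(\sqrt{H_V}/\lambda)g$ over dyadic $\lambda$, using a square-function inequality for $H_V$ (as in \cite{BSS}). Since $(i+H_V)^{1/4}\approx\lambda^{1/2}$ on the $\lambda$-band, the square function reduces \eqref{1.12} to the frequency-localized estimate $\|e^{it\sqrt{H_V}}\beta(\sqrt{H_V}/\lambda)f\|_{L^{p_c}([0,1]\times\R^n)}\lesssim\lambda^{1/2}\|f\|_2$. This is the heart of the matter, and I would prove it exactly as in \cite{BSS} and Ch.~5 of \cite{SFIO2}: cut $[0,1]$ into $\sim\lambda$ intervals of length $\sim1/\lambda$, run a $TT^*$ argument, and control the individual pieces by the $\e$-refined spectral cluster estimate \eqref{1.9} at $q=p_c$ with $\e\sim1/\lambda$, the extra gain required to reassemble the $\sim\lambda$ blocks coming from their almost-orthogonality; after rescaling this is just the Stein--Tomas extension estimate for a sphere of radius $\sim\lambda$, which is precisely the content of \eqref{1.9} at $q=p_c$. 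This establishes \eqref{1.12}.

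For the global bound \eqref{1.13} every surviving dyadic frequency satisfies $\lambda\gtrsim\sqrt M\gtrsim\Lambda$, so one may now invoke the uniform resolvent estimates \eqref{1.4} --- equivalently the spectral-measure bounds $\|E'(\mu)\|_{L^{(p_c)'}\to L^{p_c}}\lesssim\mu^{\frac n2(1-2/p_c)-1}$ of the Corollary preceding this theorem --- which hold uniformly as $\Re\zeta\to+\infty$ with $\Im\zeta\ne0$ and with no $\delta$-loss. Since the $t$-Fourier transform of $e^{it\sqrt{H_V}}g$ is carried by $dE_{\tau^2}g$, the corresponding space-time $TT^*$ operator has kernel $\int_0^\infty e^{i(t-s)\tau}E'(\tau^2)(\cd)\,d\tau$, whose $L^{(p_c)'}_{t,x}\to L^{p_c}_{t,x}$ norm is controlled by $\sup_{\tau\gtrsim\sqrt M}\|E'(\tau^2)\|_{L^{(p_c)'}\to L^{p_c}}$ together with the $\tau$-decay above; feeding this into the abstract Keel--Tao / Stein--Tomas-via-spectral-measure machinery --- carried out in a situation like the present one in Theorems~8.1 and~9.5 of \cite{BSS} --- would yield \eqref{1.13}. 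I expect the genuinely hard part to be this last step, namely squeezing enough decay out of the energy variable $\tau$ to close the \emph{global} $TT^*$ at the critical diagonal exponent $p_c$ and to sum the dyadic pieces without loss; the reductions above and the local estimate \eqref{1.12} should be comparatively routine, while a secondary, purely technical point is making the functional calculus and square-function theory for the (possibly non-positive) operator $H_V$ precise when $V$ is only assumed in $L^{n/2}$.
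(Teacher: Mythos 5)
Your proposal matches the paper's own treatment: the paper proves the spectral cluster bound \eqref{1.9} and the low-frequency bound \eqref{1.10} and then simply remarks that, with these in hand, the proofs of Theorems~8.1 and 9.5 of \cite{BSS} carry over unchanged. Your sketch --- high/low frequency split via $\Lambda$ (resp.\ $M$), reduction to half-wave propagators $e^{\pm it\sqrt{H_V}}$, Littlewood--Paley square function in $\sqrt{H_V}$ to reduce to dyadic bands, $TT^*$ over $\sim\lambda$ time blocks controlled by \eqref{1.9} with $\e\sim1/\lambda$ for \eqref{1.12}, and a Keel--Tao/spectral-measure argument built on the Corollary for \eqref{1.13} --- is precisely the scheme executed in \cite{BSS}, and you correctly identify the global $TT^*$ at the critical diagonal exponent as the delicate step whose details are to be filled in from there.
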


Here the operator $\chi(H_V/M)$ denotes a smooth spectral projection onto the interval $(M/2, +\infty)$ for $H_V$. More specifically, 
\begin{equation}\label{def2}
\chi(H_V/M)f=\int \chi(\la/M)E^\prime(\la)fd\la.
\end{equation}

Similarly, as \eqref{qm}, \eqref{1.13} suggest, by projecting onto the subspace with large spectrum, we have the following global Strichartz estimate for Schr\"odinger equations. 
\begin{theorem}\label{hvthm}  
Let the potential $V\in L^{n/2}(\mathbb{R}^n)$ be real-valued, and $\chi\in C^\infty(\R)$ be as in \eqref{high1}. Then for each pair of exponents $(p,q)$ satisfying 
\begin{equation}\label{i.2}
n(1/2-1/q)=2/p, \, \,  2\le p\le \infty\,  \, \text{and}\,\,\,n\ge 3,
\end{equation}
we have 
\begin{equation}\label{strichartz}
\bigl\|\chi(H_V/M)e^{-itH_V}f\bigr\|_{L^p_tL^q_x(\R\times \R^n)}
\lesssim \|f\|_{L^2(\mathbb{R}^n)},
\end{equation}
where $M$ is a large constant which depends on $V$. Additionally, if $\,V\in L^{n/2}(\mathbb{R}^n)+L^{\infty}(\mathbb{R}^n)$ is real-valued, then for $(p,q)$ satisfying \eqref{i.2}, we have
\begin{equation}\label{localstrichartz}
\bigl\|e^{-itH_V}f\bigr\|_{L^p_tL^q_x([0,1]\times \R^n)}
\lesssim \|f\|_{L^2(\mathbb{R}^n)}.
\end{equation}
\end{theorem}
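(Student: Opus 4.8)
\emph{Proof proposal.} The plan is to derive \eqref{strichartz} from the spectral information supplied by Theorem~\ref{unifSob} — the spectral projection bounds \eqref{1.9}, the low-frequency bound \eqref{1.10}, and above all the uniform resolvent estimates \eqref{1.4}, which, because of the way $\Omega_\delta$ is defined in \eqref{1.5}, hold \emph{up to the real axis} on the region $\{\Re\zeta\ge\Lambda^2\}$ — combined with a Littlewood--Paley decomposition adapted to $H_V$ and the abstract $TT^*$ machinery, following the scheme used for Schr\"odinger equations in \cite{BSS}. Since $M$ is a large fixed constant, $\chi(H_V/M)$ projects onto the part of the spectrum where $\sqrt{H_V}\gtrsim\sqrt M$, and I would choose $M=M(V)$ large enough that $\sqrt M\gtrsim\Lambda$, with $\Lambda=\Lambda(q,n,V)$ as in Theorem~\ref{unifSob}. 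Fixing a standard dyadic cutoff $\beta\in\Coi((1/2,2))$ and invoking a Littlewood--Paley square function inequality for the high-frequency part of $H_V$ (a consequence of \eqref{1.9} together with finite propagation speed for $\cos(t\sqrt{H_V})$ and a spectral multiplier argument, cf.\ \cite{BSS}), followed by Minkowski's inequality (legitimate since $p,q\ge2$) and the almost-orthogonality $\sum_\mu\|\beta(\sqrt{H_V}/\mu)f\|_2^2\lesssim\|f\|_2^2$, it suffices to prove
\[
\bigl\|\beta(\sqrt{H_V}/\mu)\,e^{-itH_V}f\bigr\|_{L^p_tL^q_x(\R\times\R^n)}\lesssim\|f\|_{L^2}
\]
uniformly over dyadic $\mu\gtrsim\Lambda$, and then sum the squares; the gap relation $n(1/2-1/q)=2/p$ in \eqref{i.2} guarantees that the powers of $\mu$ produced below cancel in the sum.

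Fixing such a $\mu$ and setting $U_\mu(t)=\beta(\sqrt{H_V}/\mu)e^{-itH_V}$, so that $\|U_\mu(t)\|_{L^2\to L^2}\lesssim1$ trivially, the Keel--Tao theorem applied with $\sigma=n/2\ (\ge3/2)$ — which yields the full admissible range \eqref{i.2}, the endpoint $p=2$ included via the bilinear argument — reduces matters to the dispersive bound
\[
\bigl\|U_\mu(t)U_\mu(s)^*\bigr\|_{L^1\to L^\infty}=\bigl\|\beta^2(\sqrt{H_V}/\mu)\,e^{-i(t-s)H_V}\bigr\|_{L^1\to L^\infty}\lesssim\mu^n\bigl(\mu^2|t-s|\bigr)^{-n/2}=|t-s|^{-n/2}.
\]
To get this I would write the frequency-localized propagator through Stone's formula,
\[
\beta^2(\sqrt{H_V}/\mu)\,e^{-i\sigma H_V}=\frac1{2\pi i}\int_0^\infty\beta^2(\sqrt\tau/\mu)\,e^{-i\sigma\tau}\bigl(R_V(\tau+i0)-R_V(\tau-i0)\bigr)\,d\tau,
\]
and expand each $R_V(\tau\pm i0)$ via the resolvent identity $R_V=R_0-R_0VR_V$ a fixed finite number $N$ of times. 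The leading purely free terms reconstruct the free frequency-localized Schr\"odinger kernel, which satisfies the desired $|\sigma|^{-n/2}$ bound by stationary phase; the finitely many intermediate terms carry a bounded number of factors of $V\in L^{n/2}$ between free propagators and resolvents and are controlled by H\"older's inequality and the uniform Sobolev estimates of \cite{KRS}; the remainder carries $N+1$ factors of $V$ and one factor of $R_V(\tau\pm i0)$. This last term is where the high-frequency cutoff is decisive: since $\tau\sim\mu^2\gtrsim\Lambda^2$, $H_V$ has no eigenvalues there and \eqref{1.5} imposes no distance-to-spectrum loss, so $R_V(\tau\pm i0)$ obeys \eqref{1.4} uniformly, and taking $N=N(n)$ large lets the remainder be absorbed; carrying out the $\tau$-integral (using the oscillation $e^{-i\sigma\tau}$ to ensure convergence) then gives the bound uniformly in $\mu$. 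Alternatively one may run this step directly with the spectral measure estimate of the Corollary preceding \eqref{1.10} as a substitute for a classical dispersive bound, via the square-function/$TT^*$ technology of Mockenhaupt--Seeger--Sogge adapted to $H_V$, as in \cite{BSS}.

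Summing the dyadic pieces yields \eqref{strichartz}. For the local-in-time estimate \eqref{localstrichartz}, write $V=V_1+V_2$ with $V_1\in L^{n/2}$ and $V_2\in L^\infty$. On $[0,1]$ the bounded term is removed by a Duhamel/Gr\"onwall argument: from $e^{-itH_V}=e^{-itH_{V_1}}-i\int_0^te^{-i(t-s)H_V}V_2e^{-isH_{V_1}}\,ds$ and $\|V_2\|_\infty<\infty$, after finitely many iterations on the unit interval one reduces to the estimate for $e^{-itH_{V_1}}$; this in turn splits into the high-frequency part $\chi(H_{V_1}/M)e^{-itH_{V_1}}$, handled by \eqref{strichartz}, and the low-frequency part $(1-\chi(H_{V_1}/M))e^{-itH_{V_1}}$, which is bounded crudely by a Bernstein-type estimate for the low frequencies of $H_{V_1}$ (a consequence of \eqref{1.9} summed over the finitely many dyadic scales in $[\Lambda,\sqrt M]$ together with \eqref{1.10}) and the unitarity of $e^{-itH_{V_1}}$ on $L^2$: $\|(1-\chi(H_{V_1}/M))e^{-itH_{V_1}}f\|_{L^\infty_tL^q_x}\lesssim_M\|f\|_{L^2}$, whence $\|\cdot\|_{L^p_tL^q_x([0,1]\times\R^n)}\le\|\cdot\|_{L^\infty_tL^q_x([0,1]\times\R^n)}$ since $p\le\infty$, and the dependence on the fixed constant $M$ is harmless.

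The main obstacle is the dispersive bound for the high-frequency propagator. Because $V$ is merely in $L^{n/2}$ it is neither small nor short-range, so the Born/Dyson series diverges and one cannot obtain that bound by a soft perturbation of the free dispersive estimate; the argument must be organized as a finite expansion whose remainder is absorbed using the \emph{full} strength of the uniform resolvent estimates \eqref{1.4} valid up to the spectrum — precisely the point at which the high-frequency cutoff, by eliminating embedded eigenvalues and the attendant resolvent blow-up, becomes indispensable — together with a careful treatment of the spectral ($\tau$) integral. Securing the endpoint pair $p=2$, $q=\tfrac{2n}{n-2}$ additionally requires the Keel--Tao bilinear estimate rather than a soft interpolation.
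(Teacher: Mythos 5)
Your proposal diverges from the paper's proof at the decisive step and, as sketched, has a real gap there. The paper never proves an $L^1\to L^\infty$ dispersive estimate for the perturbed, frequency-localized propagator $\beta^2(\sqrt{H_V}/\mu)\,e^{-i\sigma H_V}$; instead it works entirely with the \emph{free} dispersive estimate (encapsulated in Keel--Tao applied to $e^{it\Delta}$), establishes inhomogeneous Strichartz bounds for solutions of the rescaled damped free equation $(i\lambda\partial_t+\Delta+i\epsilon\lambda)w=F$ (Theorem~\ref{lemmm}, with the crucial new compact-support estimate \eqref{i.15}), and then splits $V=V_1+V_2$ (bounded and compactly supported plus small in $L^{n/2}$) and bootstraps, using the largeness of $\lambda$ to absorb the $V_1$ term and the smallness of $\|V_2\|_{L^{n/2}}$ to absorb the other. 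Your plan of pushing the dispersive bound through a finite Born expansion runs into concrete trouble: the free resolvent $R_0(\tau\pm i0)$ has a kernel that is singular on the diagonal (of order $|x-y|^{2-n}$, with a genuine on-shell singularity after taking boundary values), so none of the intermediate terms $R_0VR_0\cdots V R_0$ maps $L^1\to L^\infty$ for fixed $\tau$; the remainder term carries a factor of $R_V(\tau\pm i0)$ which, by \eqref{1.4}, is controlled only on the uniform-Sobolev line $\frac1p-\frac1q=\frac2n$ (and its interpolates), never as $L^1\to L^\infty$; and it is not clear that the $\tau$-integration with the oscillation $e^{-i\sigma\tau}$ can upgrade these to the pointwise-in-time $|\sigma|^{-n/2}$ decay uniformly in $\mu$. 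Dispersive estimates for $-\Delta+V$ with $V$ merely in the critical class $L^{n/2}$ are a genuinely harder and open-ended problem; a key point of this paper is to avoid needing them, by proving Strichartz directly via a Duhamel bootstrap against the free flow. Your reduction of the local estimate \eqref{localstrichartz} to the global one via Duhamel against $e^{-itH_{V_1}}$ and the high/low spectral split is structurally reasonable, but it also inherits the gap in \eqref{strichartz}; the paper's proof of \eqref{localstrichartz} is independent of \eqref{strichartz}, splitting $V=V_{\le N}+V_{>N}$ and using Keel--Tao homogeneous and inhomogeneous bounds for the free propagator together with Minkowski's inequality on the unit interval for the bounded piece.

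To salvage your approach you would need to replace the claimed $L^1\to L^\infty$ bound by a direct $TT^*$ argument at the level of $L^{p'}_tL^{q'}_x\to L^p_tL^q_x$ (as in Theorem~\ref{lemmm}) rather than a pointwise-in-time dispersive estimate, and to handle the bounded-compactly-supported piece of $V$ by a localized $L^2_tL^2_x$-to-$L^2_tL^q_x$ gain in $\lambda$ (the content of \eqref{i.15}), since that piece is not small and Hölder with $\|V\|_{L^{n/2}}$ does not make it absorbable.
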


If $V\equiv 0$, it is well-known that the Strichartz estimates \eqref{strichartz} are a consequence of the following dispersive estimate
\begin{equation}\label{disp}
\bigl\|e^{it\Delta}f\bigr\|_{L^\infty(\R^n)}
\lesssim t^{-n/2}\|f\|_{L^1(\mathbb{R}^n)},
\end{equation}
with the endpoint of above estimates corresponding to $p=2, q=\frac{2n}{n-2},\,n\ge 3$ obtained by Keel and Tao in \cite{KT}. However, the natural $L^1\rightarrow L^\infty$ dispersive estimates for the 
operator $e^{-itH_V}$, as well as the Strichartz estimates \eqref{strichartz} may break down due to the possible existence of bounded states. If $V\in L^{n/2}(\R^n)$, it was proved by Goldberg in  \cite{goldberg2009strichartz} that global Strichartz estimates \eqref{strichartz} hold with $f$ projected onto the continuous part of the spectrum under the assumption that $0$ is neither a eigenvalue or resonance. Under the same condition, Mizutani \cite{mizutani2020uniform} also proved similar inhomogeneous Strichartz estimates when the pairs $(p,q)$ are outside the admissible range in \eqref{i.2}. The inequalities in \eqref{strichartz} show that, even if $0$ is a eigenvalue or resonance, the same estimates still hold as long as $f$ is projected onto the higher part of the spectrum. See also 
\cite{jss} for related high energy estimates for a different class of potentials $V$.

We are grateful to Mizutani~\cite{mizutani2020uniform} for pointing out his recent work after a preliminary version of this paper was
completed.


\newsection{Uniform Sobolev inequalities and quasimode estimates in $\mathbb{R}^n$}\label{unif}

In this section we shall prove Theorem \ref{unifSob}. As in \cite{blair2020uniform}, the main idea is to use the second resolvent formula 
\begin{equation}\label{2.1}
(-\Delta+V-\zeta)^{-1}-(-\Delta-\zeta)^{-1} = -(-\Delta-\zeta)^{-1}\, V(\, -\Delta+V-\zeta)^{-1}, \quad \mathrm{Im} \, \zeta \ne 0,
\end{equation}
along with quasimode estimates and 
 uniform Sobolev estimates for the 
unperturbed operator $H_0=-\Delta$ from \cite{KRS} and \cite{BSS}. Specifically, we shall require that for $n\ge 3$
\begin{equation}\label{2.2}
\|(-\Delta-\zeta)^{-1}f\|_{L^q(\R^n)} \le C\|f\|_{L^p(\R^n)},\,\,\, \forall \,  \zeta\in \mathbb{C}\setminus[0,+\infty),
\end{equation}
for pairs of exponents $(p,q)$ satisfying \eqref{1.3}, which is due to Kenig, Ruiz, and the second author \cite{KRS}.  We  also need the quasimode estimates
\begin{equation}\label{2.3}
\|u\|_{L^{p_c}(\R^n)} \le C\la^{-1+1/p_c}\e^{-1/2}\|(-\Delta-\la^2+i\e\la)f\|_{L^2(\R^n)},\,\,\, \forall \,\,\, 0<\e<\la/2,
\end{equation}
where $p_c=\tfrac{2(n+1)}{n-1}$. By a change of scale argument, it is not hard to check that \eqref{2.3} is an equivalent version of Stein-Thomas restriction theorem for $\R^n$ (see, e.g., \cite{BSS} Proposition 9.3). Actually, similar estimates also hold in the case $\e\ge \la/2$, 
but we skip these here since they are less useful.

To prove Theorem \ref{unifSob}, a key ingredient is the following theorem.
\begin{theorem}\label{lemm}
Let $n\ge 3$ and $\tfrac{2n}{n-1}<q<\tfrac{2n}{n-3}$, then if $\,V\in L^{n/2}(\mathbb{R}^n)$ is fixed, we have 
\begin{equation}\label{2.4}
\|(-\Delta-\la^2+i\e\la)^{-1}\, (V f)\|_{q}\le 1/2 \|f\|_q, \,\,\,\forall \,\,\la\ge\Lambda, \,\,\, \e>0,
\end{equation}
assuming that
$\Lambda=\Lambda(q,n,V)\ge 1$ sufficiently large.
\end{theorem}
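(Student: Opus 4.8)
The plan is to establish \eqref{2.4} by showing that the operator norm of $f \mapsto (-\Delta - \la^2 + i\e\la)^{-1}(Vf)$ on $L^q(\R^n)$ tends to zero as $\la \to \infty$, uniformly in $\e > 0$. The natural factorization is to write the composition as
\[
(-\Delta - \la^2 + i\e\la)^{-1} \circ M_V : L^q \xrightarrow{\ M_V\ } L^{p(q)} \xrightarrow{\ (-\Delta - \la^2 + i\e\la)^{-1}\ } L^q,
\]
where $M_V$ is multiplication by $V$ and $p(q)$ is the exponent with $\frac{1}{p(q)} - \frac1q = \frac2n$. The second arrow is bounded with a constant independent of $\la$ and $\e$ by the Kenig--Ruiz--Sogge bound \eqref{2.2} (valid since $\tfrac{2n}{n-1} < q < \tfrac{2n}{n-3}$ forces $(p(q), q)$ into the admissible range \eqref{1.3}). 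By H\"older's inequality, $M_V : L^q \to L^{p(q)}$ is bounded with norm at most $\|V\|_{L^{n/2}}$. Composing gives a bound $C \|V\|_{n/2} \|f\|_q$, which is \emph{not} small — so a bare application of the two ingredients is insufficient, and the real work is to exploit the large parameter $\la$.

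To gain the smallness, I would split $V = V_1 + V_2$ where $V_1 \in L^\infty \cap L^{n/2}$ is a truncation (say $V_1 = V \1_{\{|V| \le R, |x| \le R\}}$) and $\|V_2\|_{L^{n/2}} < \eta$ for a small $\eta$ to be chosen. The $V_2$ piece contributes $\le C\eta \|f\|_q$ to the operator norm by the argument above, which is $\le \tfrac14\|f\|_q$ once $\eta$ is small enough; this requires only the density of bounded compactly supported functions in $L^{n/2}$. For the $V_1$ piece, which is bounded and compactly supported, I would show the operator norm on $L^q$ decays in $\la$. Here one interpolates: the bound $L^q \to L^q$ of $(-\Delta - \la^2 + i\e\la)^{-1} M_{V_1}$ can be estimated by writing it as $L^q \xrightarrow{M_{V_1}} L^{p(q)} \to L^q$ using the \emph{scaled} form of \eqref{2.2}, or better, by using the resolvent bound together with the decay of $(-\Delta - \la^2+i\e\la)^{-1}$ between $L^2$-based spaces. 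Concretely, since $V_1$ is bounded with compact support, $M_{V_1}: L^q \to L^2_{\text{comp}}$ and $M_{V_1}: L^2_{\text{comp}} \to L^{q'}$ are bounded (with constants depending on $R$), and one can route through weighted $L^2$ resolvent estimates $\|\langle x\rangle^{-1}(-\Delta - \la^2 + i\e\la)^{-1}\langle x\rangle^{-1}\|_{L^2\to L^2} = O(\la^{-1})$, gaining a power of $\la$; Sobolev embedding upgrades this to the $L^q$ statement with a loss that is still a negative power of $\la$. Thus for $\la \ge \Lambda(q,n,V)$ large, the $V_1$ contribution is also $\le \tfrac14\|f\|_q$, and adding the two pieces gives \eqref{2.4}.

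The main obstacle I anticipate is making the $\la$-decay for the truncated potential $V_1$ genuinely uniform in $\e > 0$, including the delicate regime $\e$ comparable to or larger than $\la$ where the resolvent is not close to the limiting boundary value; one wants a bound on $\|(-\Delta - \la^2 + i\e\la)^{-1}\|$ between the relevant spaces that is monotone or at least controlled as $\e$ grows, which should follow because increasing $|\Im \zeta|$ only helps the resolvent, but this needs to be checked carefully against the specific norms used. A secondary technical point is ensuring the interpolation between the scale-invariant $L^{p(q)} \to L^q$ estimate and the $\la$-gaining weighted $L^2$ estimate lands on exactly $L^q \to L^q$ with a negative power of $\la$ and constants depending only on $q, n$ and the fixed truncation parameter $R$ — i.e., that the $R$-dependence does not conspire with the $\la$-dependence. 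Once $R$ is frozen (after $\eta$ is chosen), $\Lambda$ may depend on $R$ and hence on $V$, which is exactly what the statement permits.
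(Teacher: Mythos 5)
Your decomposition $V = V_1 + V_2$ and the treatment of the $V_2$ piece (small $L^{n/2}$ norm, H\"older plus the scale-invariant KRS estimate \eqref{2.2}) is correct and matches the paper's treatment of $V_{>N}$ exactly. The problem lies in the $V_1$ piece, and the gap is not the one you flagged (uniformity in $\e$ is in fact benign --- increasing $\Im\zeta$ only helps) but is structural. The operator you must bound is $(-\Delta-\zeta)^{-1}M_{V_1}\colon L^q\to L^q$. The weighted estimate $\|\langle x\rangle^{-1}(-\Delta-\zeta)^{-1}\langle x\rangle^{-1}\|_{L^2\to L^2}=O(\la^{-1})$ needs a decaying weight on \emph{both} sides; here only the input side carries $V_1$ (hence a weight), while the output is global $L^q$. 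Your remark that ``$M_{V_1}\colon L^2_{\rm comp}\to L^{q'}$ is bounded'' would be relevant if the operator under study were $M_{V_1}(-\Delta-\zeta)^{-1}M_{V_1}$, but it is not. Concretely, the far field $|x|\gg R$ receives an $O(1)$ contribution: the free resolvent kernel at energy $\la^2$ and small $\e$ behaves like $\la^{(n-3)/2}|x-y|^{-(n-1)/2}e^{\pm i\la|x-y|}$, which carries no decay the weighted $L^2$ bound can see. Trying to recover the far field by the scale-invariant KRS bound gives only $\|V_1\|_{L^{n/2}}$, which is large, not small. And if one instead tries to interpolate the KRS $L^{p(q)}\to L^q$ bound (no $\la$-gain, no weights) with the weighted $L^2$ bound ($\la^{-1}$ gain, weight on output), the weight on the output side is eliminated only at the endpoint $\theta=0$, at which point the $\la$-gain vanishes; the interpolation does not land on a $\la$-decaying $L^q\to L^q$ bound. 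So as written the $V_1$ estimate does not close.

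There are two ways to fix this. The quick way (Mizutani's, which the paper cites) is to note that $V_1$ is bounded with compact support, hence $V_1\in L^r$ for every $r<n/2$; applying the KRS estimates \emph{off} the scale-invariant line, namely with $\frac1p-\frac1q=\frac1r>\frac2n$ (and $\frac2{n+2}\le\frac1p-\frac1q$), yields $\|(-\Delta-\zeta)^{-1}\|_{L^p\to L^q}\lesssim\la^{-n(\frac1r-\frac2n)}$, a genuine negative power of $\la$, and then H\"older closes the $V_1$ piece. The paper itself takes a longer route that does \emph{not} require compact support of the bounded piece $V_{\le N}$: it splits the free resolvent kernel into a non-oscillatory part $K_0$ (a symbol of order $-2$, giving an $O(\la^{-2})$ gain by Young) and an oscillatory part $K_1$, dyadically decomposes $K_1=\sum_j K_1^j$ by the physical scale $|x-y|\approx 2^j/\la$, controls small $j$ by Young's inequality (where $\|V_{\le N}\|_\infty\le N$ suffices), and controls large $j$ via Proposition~\ref{oscillatory} (Stein's oscillatory integral theorem), which produces a geometric gain $2^{-j\delta(p)}$ on the uniform Sobolev line. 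Summing over $j\ge j_0$ with $2^{j_0}\approx\la^{2/(n+1)}$ yields the needed negative power of $\la$. Either of these would replace your weighted-$L^2$ step; the interpolation route you sketched does not.
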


\eqref{2.4} essentially follows from Lemma 3.3 in \cite{mizutani2020uniform}, where the author gave a short proof of this lemma using uniform Sobolev estimates for the free resolvent operator when $\frac{2}{n+2}\le\frac1p-\frac1q\le \frac2n$. The proof of \eqref{2.4} that we shall give is more complicated since it requires a decomposition of the free resolvent operator into dyadically localized operators.  However, we obtain
results that are
 less restrictive on the conditions required for $V$.  See the discussion at the end of this section for more details.

We shall postpone the proof of Theorem~\ref{lemm} to the end of this section and first see how we can apply the above theorem to obtain \eqref{1.4} and \eqref{qm}.  Even though the proof mostly follows from the arguments in \cite{blair2020uniform}, we include it here for the sake of completeness.

To prove \eqref{1.4}, if $\text{Re} \,\zeta \ge\Lambda^2$, it is equivalent to showing that 

$$\bigl\| (H_V-\la^2+i\e\la)^{-1}\bigr\|_{L^p\to
L^q}\lesssim 1, \quad \text{if } \, \,
\la \ge \Lambda, $$
with $\Lambda$ sufficiently large and $(p,q)$ as
in \eqref{1.3}.  By duality, it suffices prove this inequality when 
\begin{equation}\label{2.5}
\tfrac{2n}{n-1}<q\le \tfrac{2n}{n-2}.
\end{equation}
Thus, our task is to show that
\begin{equation}\label{2.6}
\bigl\| (H_V-\la^2+i\e\la)^{-1}f\bigr\|_{L^q(\R^n)}
\le C\|f\|_{L^p(\R^n)} \quad \text{if } \, \,
\la \ge \Lambda,
\end{equation}
with $(p,q)$ satisfying \eqref{1.3} and \eqref{2.5}.
We are also assuming that \eqref{2.2} and \eqref{2.3} are valid for this pair of exponents.

We are assuming \eqref{2.5} since by Sobolev estimates (see, e.g., (6.5) in the appendix of \cite{blair2020uniform}),
we have
$$u\in L^q(\R^n), \quad 2\le q\le \tfrac{2n}{n-2} \, 
\, \, \text{if } \, \, \,
(H_V-\la^2+i\e\la)u\in L^2.$$
Thus for $q$ as in \eqref{2.5}
\begin{equation}\label{2.7}
\bigl\| (H_V-\la^2+i\e\la)^{-1}f\bigr\|_{L^q(\R^n)}
<\infty \quad \text{if } \, \, f\in L^2(\R^n).
\end{equation}
In proving \eqref{2.6}, since $L^2$ is dense in $L^p$
we may and shall assume that $f\in L^2(\R^n)$ to be
able to use \eqref{2.7} to justify a bootstrapping
argument that follows.

By using the second resolvent formula \eqref{2.1}, write
\begin{align}\label{2.8}
(H_V-\la^2+i&\e\la)^{-1}f
\\
= &(-\Delta-\la^2+i \e\la)^{-1}f
\notag
\\
&- (-\Delta-\la^2+i \e\la)^{-1}
\bigl(V\cdot (H_V-\la^2+i \e\la)^{-1}f\bigr)
\notag
\\
&=I-II. \notag
\end{align}

If we apply the uniform Sobolev estimates \eqref{2.2} for the 
unperturbed operator, we have
\begin{equation}\label{2.9}
\|I\|_q\le C\|f\|_p,
\end{equation}
while by using \eqref{2.4} in Theorem~\ref{lemm}, 
\begin{equation}\label{2.10}\|II\|_q \le 1/2
\bigl\| (H_V-\la^2+i \e\la)^{-1}f\bigr\|_{L^q},\,\,\,\text{if}\,\,\, \la\ge\Lambda.
\end{equation}
Thus, if we combine \eqref{2.8}, \eqref{2.9} and \eqref{2.10}, we conclude that for $\la\ge\Lambda$ we have
$$\|(H_V-\la^2+i \e\la)^{-1}f\|_{L^q(\R^n)}
\le C\|f\|_{L^p(\R^n)}
+\frac12 \, \| (H_V-\la^2+i\e\la)^{-1}f\bigr\|_{L^q(\R^n)}.$$
By \eqref{2.7}, this leads to \eqref{2.6} since
we are assuming, as we may, that $f\in L^2(\R^n)$.

Now we shall give the proof of quasimode estimates \eqref{qm}, which are needed later in the proof of \eqref{1.4} for $\text{Re} \,\zeta <\Lambda^2$.  First, note that by the quasimode estimates \eqref{2.3} for the unperturbed operator,
\begin{equation}\label{2.11}
\|I\|_{p_c}\le C \la^{-1+1/p_c}\e^{-1/2}\|f\|_2, \,\,\text{if} \,\,0<\e<\la/2.
\end{equation}
If we combine \eqref{2.8}, \eqref{2.10} and \eqref{2.11}, we conclude that for $\la\ge\Lambda$ we have
$$\|(H_V-\la^2+i \e\la)^{-1}f\|_{L^{p_c}(\R^n)}
\le C \la^{-1+1/p_c}\e^{-1/2}\|f\|_{L^{2}(\R^n)}
+\frac12 \, \| (H_V-\la^2+i\e\la)^{-1}f\bigr\|_{L^{p_c}(\R^n)}.$$
By \eqref{2.7}, this leads to \eqref{qm} for $q=p_c=\tfrac{2(n+1)}{n-1}$, since in this case $-1+1/p_c=n(1/2-1/p_c)-3/2$.

The remaining estimates in \eqref{qm} for exponents $q>\frac{2(n+1)}{n-1}$ as in \eqref{1.6} now just 
are a consequence of 
the following theorem.
\begin{theorem}\label{big} 
Let $n\ge5$, assume that \eqref{qm} holds for some $\tfrac{2(n+1)}{n-1}\leq r< \tfrac{2n}{n-4}$ and $\la\ge \Lambda(r, V)$, with $0<\e<\la/2$.
Then if $V\in L^{n/2}(\R^n)$ we have for $u\in \text{Dom}(H_V)$
\begin{equation}\label{2.12}
\|u\|_{q}\le C  \,
\la^{n(1/2-1/q)-3/2} \, \e^{-1/2}\bigl\|(-\Delta+V-\la^2+i \e\la)u\bigr\|_2, 
 \, \, \text{if } \, 
\la \ge \Lambda,\,\,\,\, r< q\leq \tfrac{2n}{n-4}.
\end{equation}
Similarly, for $n=3$ or $n=4$, assume that \eqref{qm} holds for some $\tfrac{2(n+1)}{n-1}\leq r< \infty$, with $0<\e<\la/2$, then we have
\begin{equation}\label{2.13}
\|u\|_{q}\le C\,
\la^{n(1/2-1/q)-3/2} \,\e^{-1/2}\bigl\|(-\Delta+V-\la^2+i\e\la)u\bigr\|_2, \\
 \, \, \, \text{if } \, \, 
\la \ge \Lambda,\,\,\, r< q<\infty,
\end{equation}
assuming that $\Lambda=\Lambda(q, n, V)$ in \eqref{2.12} and \eqref{2.13} are sufficiently large.
\end{theorem}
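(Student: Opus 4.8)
The plan is to bootstrap from the assumed quasimode estimate at exponent $r$ up to exponents $q > r$ by once more invoking the second resolvent formula \eqref{2.1}, exactly in the spirit of the proof of \eqref{1.4} given above, but now exploiting a better free-resolvent bound valid for the larger exponents. Write $u = (H_V - \la^2 + i\e\la)^{-1}f$ with $f = (H_V-\la^2+i\e\la)u \in L^2$ (noting that $u \in \mathrm{Dom}(H_V)$ forces $f\in L^2$, and Sobolev embedding guarantees $u\in L^q$ for $q\le \tfrac{2n}{n-4}$ when $n\ge 5$, so the left side is finite and a bootstrap is legitimate). Apply \eqref{2.8}: the main term $I = (-\Delta-\la^2+i\e\la)^{-1}f$ must be controlled in $L^q$ by $\la^{n(1/2-1/q)-3/2}\e^{-1/2}\|f\|_2$, and the error term $II$ must be absorbed.

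For the main term $I$, I would use the free quasimode/spectral-projection estimates for the unperturbed operator at the exponent $q$: for $\tfrac{2(n+1)}{n-1}\le q\le \tfrac{2n}{n-4}$ one has $\|(-\Delta-\la^2+i\e\la)^{-1}g\|_q \lesssim \la^{n(1/2-1/q)-3/2}\e^{-1/2}\|g\|_2$, which is the scaled form of the standard $L^2\to L^q$ spectral projection bound for the Euclidean Laplacian (Sogge's estimates), valid in precisely the stated range; this handles $\|I\|_q$. For the error term $II = (-\Delta-\la^2+i\e\la)^{-1}(V u)$, the strategy is to split $V u$ via Hölder. Since $V\in L^{n/2}$, for any $\eta>0$ we may write $V = V_1 + V_2$ with $\|V_1\|_{n/2}<\eta$ and $V_2\in L^\infty$ (or $V_2$ of small support and bounded). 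The piece with $V_1$ is handled by a Hölder step $\|V_1 u\|_{?}\lesssim \|V_1\|_{n/2}\|u\|_q$ feeding into the free $L^{p}\to L^q$ uniform Sobolev bound \eqref{2.2} (whose range, $\tfrac1{p}-\tfrac1q = \tfrac2n$, matches the Hölder exponent), yielding a factor $\le C\eta \la^{\cdots}\cdot(\text{something})$ — but to get the right power of $\la$ we instead pair it with Theorem~\ref{lemm}-type reasoning: $\|(-\Delta-\la^2+i\e\la)^{-1}(V_1 u)\|_q \le \tfrac14\|u\|_q$ for $\la$ large. The bounded piece $V_2 u$ is placed in $L^2$ (using $u\in L^2$, which holds since $u\in\mathrm{Dom}(H_V)$), and then fed into the already-assumed quasimode estimate at exponent $r$ composed with Hölder/interpolation, or directly into the free $L^2\to L^q$ bound, producing $\la^{n(1/2-1/q)-3/2}\e^{-1/2}\|u\|_2$; but $\|u\|_2$ itself must be controlled — here one uses that for $\la\ge\Lambda$, $\|u\|_2 = \|(H_V-\la^2+i\e\la)^{-1}f\|_2 \lesssim (\la\e)^{-1}\|f\|_2$ by the elementary spectral-theorem resolvent bound, and one checks the resulting power of $\la$ and $\e$ is no worse than the target (absorbing any surplus into largeness of $\Lambda$), or alternatively one first upgrades to $L^r$ via the hypothesis and then to $L^q$ by interpolating against a crude $L^2$ bound.

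The cleanest route, which I would actually pursue, avoids juggling $\|u\|_2$: interpolate. From the hypothesis we have the bound at $r$; from Sobolev embedding plus the resolvent bound we have a (non-sharp) bound at $q=\tfrac{2n}{n-4}$; but since we want the \emph{sharp} power at every intermediate $q$, instead run the resolvent-formula bootstrap directly at each $q\in(r,\tfrac{2n}{n-4}]$ as above, using at exponent $q$: (i) the sharp free bound for $I$, (ii) Theorem~\ref{lemm}'s contraction estimate \eqref{2.4} — which is stated for $\tfrac{2n}{n-1}<q<\tfrac{2n}{n-3}$ and hence covers all $q\le \tfrac{2n}{n-4}<\tfrac{2n}{n-3}$ — to absorb the full error term $II\le\tfrac12\|u\|_q$, and (iii) the a priori finiteness \eqref{2.7}-analog $\|u\|_q<\infty$ from Sobolev embedding to close the bootstrap. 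This is essentially verbatim the argument proving \eqref{2.6}, only with the free uniform Sobolev bound \eqref{2.2} replaced by the sharp free $L^2\to L^q$ quasimode bound at exponent $q$; the case $n=3,4$ with $r<q<\infty$ is identical since then every finite $q$ lies below $\tfrac{2n}{n-3}$ as well (for $n=3$, $\tfrac{2n}{n-3}=\infty$; for $n=4$, $\tfrac{2n}{n-3}=8$, so one needs a supplementary Hölder-in-$x$ interpolation between $L^8$ and $L^r$, or a dyadic argument, to reach all $q<\infty$). The main obstacle is precisely this last point: verifying that the free Euclidean $L^2\to L^q$ spectral projection estimate holds with the claimed $\la$-power and $\e^{-1/2}$ gain throughout $\tfrac{2(n+1)}{n-1}\le q\le\tfrac{2n}{n-4}$ (and for $n=4$ pushing past $q=8$), and confirming that Theorem~\ref{lemm}'s hypothesis $q<\tfrac{2n}{n-3}$ genuinely contains the target range — which it does, with room to spare, since $\tfrac{2n}{n-4}<\tfrac{2n}{n-3}$ for all $n\ge5$.
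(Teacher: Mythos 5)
Your ``cleanest route'' rests on the claim that Theorem~\ref{lemm}'s hypothesis $q<\tfrac{2n}{n-3}$ ``genuinely contains the target range --- which it does, with room to spare, since $\tfrac{2n}{n-4}<\tfrac{2n}{n-3}$ for all $n\ge5$.'' That inequality is backwards: for $n\ge5$ one has $n-4<n-3$, hence $\tfrac{2n}{n-4}>\tfrac{2n}{n-3}$ (e.g.\ $n=5$ gives $10$ vs.\ $5$; $n=6$ gives $6$ vs.\ $4$). So Theorem~\ref{lemm} does \emph{not} cover $q\in\bigl(\tfrac{2n}{n-3},\,\tfrac{2n}{n-4}\bigr]$ when $n\ge5$, nor $q\ge\tfrac{2n}{n-3}=8$ when $n=4$; these are precisely the exponents that the theorem is supposed to add. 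The obstruction is not cosmetic: Theorem~\ref{lemm} is proved by feeding $Vu$ into the free uniform Sobolev estimate \eqref{2.2} after H\"older, and \eqref{2.2} genuinely fails once $q\ge\tfrac{2n}{n-3}$ (the dual condition $p(q)'>\tfrac{2n}{n-1}$ in \eqref{1.3} is violated). Thus the contraction step you rely on to absorb $II=(-\Delta-\la^2+i\e\la)^{-1}(Vu)$ in $L^q$ is unavailable for exactly the new exponents, and the whole bootstrap collapses there. A related symptom of the same issue is that your preferred argument never actually invokes the hypothesis that \eqref{qm} holds at the smaller exponent $r$, even though Theorem~\ref{big} is stated as a conditional implication from $r$ to $q$ --- a signal that the direct ``run the same argument at $q$'' approach cannot be the intended one.

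The paper's proof avoids this by a frequency decomposition $u=\beta(P/\la)u + (1-\beta(P/\la))u$ with $P=\sqrt{-\Delta}$, which is where the hypothesis at $r$ actually enters. On the near-characteristic piece $A=\beta(P/\la)(H_V-\la^2+i\e\la)^{-1}f$, one gains by a Bernstein/Mikhlin estimate $\|(-\Delta)^{\alpha/2}\beta(P/\la)\|_{L^r\to L^r}\lesssim\la^\alpha$, passes from $L^q$ to $L^r$ by Sobolev, and then applies the assumed estimate at $r$; this upgrades $r$ to $q$ at the correct power of $\la$ without ever needing a uniform Sobolev bound at $q$. On the away-from-characteristic piece, the multiplier $(1-\beta(|\xi|/\la))(|\xi|^2-\la^2+i\e\la)^{-1}\la^2$ is Mikhlin, so one gets an extra $\la^{-2}$ that absorbs the bounded part $V_{\le N}$ by taking $\la\ge\Lambda$, while $V_{>N}$ is absorbed by H\"older and smallness of $\|V_{>N}\|_{n/2}$, now only needing the low-order Sobolev embedding $(-\Delta+1)^{-1}:L^{p(q)}\to L^q$ (which is valid for all $q$ in the target range, unlike \eqref{2.2}). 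Finally the a priori finiteness needed to close the bootstrap comes from Lemma~\ref{Sobv}, not from the Sobolev embedding you cite, which again is insufficient at the top endpoint. Your instinct to reduce to the free resolvent and absorb via a contraction is correct for the smaller-$q$ part of the range, but the genuinely new ingredient --- the Littlewood--Paley split that lets the $r$-hypothesis do the work precisely where \eqref{2.2} is unavailable --- is missing.
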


Theorem~\ref{big} is essentially the analog of Theorem 2.3 in \cite{blair2020uniform}, which says we can use the quasimode estimates for smaller exponents $r$ to obtain quasimode estimates for larger exponents $q$ up to the optimal range. Here compared with Theorem 2.3 in \cite{blair2020uniform}, we do not have to assume that $\e$ has a lower bound which depends on $\la$ by requiring that $\la\ge \Lambda(q, n, V)$, as in the case of uniform Sobolev inequalities, \eqref{1.4}.

As in \cite{blair2020uniform}, the proof of Theorem~\ref{big} requires the following lemma.
\begin{lemma}\label{Sobv}
Let $V\in L^{n/2}(\R^n)$ be real valued, then there exists a constant $N_0>1$ large enough such that for $u\in \text{Dom}(H_V)$, we have
\begin{multline}\label{2.14}
\|u\|_{q}\le \bigl\|(-\Delta_g+V+N_0)u\bigr\|_2,\\ \,\text{for}\,\,\, 2<q< \infty, \,\,\text{if} \,\, n=3, 4,\,\,\,\text{or}\,\,\, 2<q\le \tfrac{2n}{n-4}\,\,\text{if}\,\,n\ge 5.
\end{multline}
\end{lemma}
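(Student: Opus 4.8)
The plan is to deduce the inequality \eqref{2.14} from two ingredients: the ordinary Sobolev embedding for the free operator $-\Delta$, and a perturbation argument that controls the $V$-term using the criticality of $V\in L^{n/2}$. First, recall that from the appendix of \cite{blair2020uniform} the operator $H_V=-\Delta+V$ with $V\in L^{n/2}$ is essentially self-adjoint and bounded below, so $-\Delta+V+N_0\ge 1$ (say) is a positive invertible operator for $N_0$ large, and $(-\Delta+V+N_0)^{-1}$ maps $L^2$ into $\mathrm{Dom}(H_V)$. Thus it is equivalent to prove the operator bound
\begin{equation*}
\|(-\Delta+V+N_0)^{-1}f\|_q\lesssim \|f\|_2
\end{equation*}
for $f\in L^2$, with $q$ in the stated range. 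The unperturbed analogue, namely $\|(-\Delta+N_0)^{-1}f\|_q\lesssim N_0^{-\sigma}\|f\|_2$ with a positive power of $N_0$ when $2<q\le \tfrac{2n}{n-4}$ (and $q<\infty$ when $n=3,4$), is classical: it follows by writing $(-\Delta+N_0)^{-1}=(-\Delta+N_0)^{-1/2}\cdot(-\Delta+N_0)^{-1/2}$, using $L^2\to L^{q}$ boundedness via the Sobolev embedding $\dot H^s\hookrightarrow L^q$ with $s=n(1/2-1/q)\le 2$, and gaining a negative power of $N_0$ from scaling. The gain in $N_0$ is exactly what will absorb the perturbation.

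Next I would set up the Neumann-series / resolvent-expansion argument. Using the second resolvent identity,
\begin{equation*}
(-\Delta+V+N_0)^{-1}=(-\Delta+N_0)^{-1}-(-\Delta+N_0)^{-1}\,V\,(-\Delta+V+N_0)^{-1},
\end{equation*}
so if we write $S=(-\Delta+V+N_0)^{-1}$ it suffices to show the map $f\mapsto (-\Delta+N_0)^{-1}(Vg)$ is a contraction on $L^q$ for $g$ restricted appropriately, plus a bounded inhomogeneous term. Concretely, one first checks $\|(-\Delta+N_0)^{-1}(Vg)\|_q\le \tfrac12\|g\|_q$ for $N_0$ large: by Hölder, multiplication by $V\in L^{n/2}$ maps $L^q\to L^{r}$ with $\tfrac1r=\tfrac1q+\tfrac2n$, and then $(-\Delta+N_0)^{-1}:L^{r}\to L^{q}$ with operator norm $O(N_0^{-\sigma})$, $\sigma>0$, again by the Sobolev/scaling bound since $\tfrac1r-\tfrac1q=\tfrac2n$ is exactly the free-resolvent gap. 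Choosing $N_0$ large makes the constant $\le 1/2$. Then, since by the basic $L^2\to L^q$ Sobolev bound \eqref{2.7}-type reasoning one already knows $Sf\in L^q$ for $f\in L^2$ (so the bootstrap is legitimate), the identity gives
\begin{equation*}
\|Sf\|_q\le \|(-\Delta+N_0)^{-1}f\|_q+\tfrac12\|Sf\|_q,
\end{equation*}
hence $\|Sf\|_q\le 2\|(-\Delta+N_0)^{-1}f\|_q\lesssim N_0^{-\sigma}\|f\|_2\le \|f\|_2$ for $N_0$ large, which is \eqref{2.14}. (Note this even allows us to arrange the constant to be $1$ as stated, by taking $N_0$ a bit larger.)

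The main obstacle is the endpoint $q=\tfrac{2n}{n-4}$ when $n\ge 5$, where $s=n(1/2-1/q)=2$ and the Sobolev embedding $\dot H^2\hookrightarrow L^{2n/(n-4)}$ is sharp, so one must be careful that the free bound $(-\Delta+N_0)^{-1}:L^2\to L^{2n/(n-4)}$ genuinely holds with a negative power of $N_0$ (it does, by scaling: the homogeneous estimate $\||\xi|^{-2}\hat f\|$ type bound combined with $|\xi|^2+N_0\gtrsim N_0$ on low frequencies and $\gtrsim|\xi|^2$ on high frequencies). A secondary point is verifying that the Hölder-plus-resolvent composition really lands in the admissible exponent pair for the free resolvent estimate — i.e. that $r=\tfrac{2nq}{2q+n}>1$ and the pair $(r,q)$ lies in the range where $(-\Delta+N_0)^{-1}$ is bounded with a good $N_0$-power — which is automatic from $\tfrac1r-\tfrac1q=\tfrac2n$ but should be stated. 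For $n=3,4$ there is no upper endpoint constraint beyond $q<\infty$, and the argument is cleaner since the relevant Sobolev exponents are subcritical.
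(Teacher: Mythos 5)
There is a genuine gap in the key contraction step. You claim that multiplication by $V\in L^{n/2}$ maps $L^q\to L^r$ with $\tfrac1r=\tfrac1q+\tfrac2n$, and that $(-\Delta+N_0)^{-1}:L^r\to L^q$ has operator norm $O(N_0^{-\sigma})$ for some $\sigma>0$, so that taking $N_0$ large makes the constant $\le 1/2$. But the gap $\tfrac1r-\tfrac1q=\tfrac2n$ is precisely the scale-invariant (Sobolev) gap, and a direct scaling computation shows the $N_0$-dependence vanishes there: writing $(-\Delta+N_0)^{-1}=N_0^{-1}S_\mu(-\Delta+1)^{-1}S_\mu^{-1}$ with $\mu=N_0^{-1/2}$ and $S_\mu f(x)=f(x/\mu)$, one finds
\begin{equation*}
\|(-\Delta+N_0)^{-1}\|_{L^r\to L^q}=N_0^{-1+\frac n2(\frac1r-\frac1q)}\|(-\Delta+1)^{-1}\|_{L^r\to L^q},
\end{equation*}
and the exponent of $N_0$ is exactly $-1+\tfrac n2\cdot\tfrac2n=0$. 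So the $L^r\to L^q$ resolvent norm is \emph{independent} of $N_0$, and taking $N_0$ large buys you nothing at this gap; the contraction fails for general $V\in L^{n/2}$.

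The standard fix, which is used throughout this paper and in the cited proof of Lemma~2.4 of \cite{blair2020uniform}, is to split $V=V_{\le N}+V_{>N}$ as in \eqref{2.18}--\eqref{2.20}. For the tail $V_{>N}$, the $L^{n/2}$ norm $\|V_{>N}\|_{n/2}=\delta(N)$ can be made small by choosing $N$ large (this uses $V\in L^{n/2}$, not $N_0$), and then the scale-invariant $L^r\to L^q$ bound with H\"older gives $\|(-\Delta+N_0)^{-1}(V_{>N}g)\|_q\le C\delta(N)\|g\|_q\le\tfrac14\|g\|_q$. For the bounded piece $V_{\le N}\in L^\infty$, you instead use the $L^q\to L^q$ resolvent bound, where the gap is zero and the scaling exponent is $-1$, so $\|(-\Delta+N_0)^{-1}\|_{L^q\to L^q}\lesssim N_0^{-1}$; then $\|(-\Delta+N_0)^{-1}(V_{\le N}g)\|_q\le CNN_0^{-1}\|g\|_q\le\tfrac14\|g\|_q$ once $N_0$ is large depending on $N$. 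Combining the two absorbs the perturbation. Your bootstrapping structure and the use of the resolvent identity are otherwise fine, and the unperturbed $L^2\to L^q$ bound with a negative $N_0$-power does hold for $q$ strictly below the endpoint $\tfrac{2n}{n-4}$; but without the $V_{\le N}/V_{>N}$ decomposition the perturbation term cannot be made small.
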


Lemma~\ref{Sobv} is essentially a special case of Lemma 2.4 in \cite{blair2020uniform}, where the authors proved a sharp Sobolev type estimates for the operator $H_V$ on compact manifolds. However, their proof works equally well in the case of $\R^n$. Thus, for 
the sake of brevity, we shall skip the proof of \eqref{2.14} here and refer the reader to
the proof of Lemma 2.4
in \cite{blair2020uniform} for details. The main idea is that although the uniform resolvent estimates for the operator $(-\Delta-\zeta)^{-1}$ hold for a rather restricted range of exponents 
$(p,q)$, it is possible to enlarge the range of $(p,q)$ at the expense of requiring $\zeta$ to lie in certain regions of the complex plane.

\begin{proof}[Proof of Theorem \ref{big}]
Throughout the proof we shall assume that 
\begin{equation}\label{2.61}
\tfrac{2(n+1)}{n-1}\le r<q\leq \tfrac{2n}{n-4}, \,\,\,\text{if}\,\,\, n\ge 5,\,\,\,\text{or}\,\,\,\tfrac{2(n+1)}{n-1}\le r<q< \infty,\,\,\ \text{if}\,\,\, n=3, 4.
\end{equation}
Note that proving \eqref{2.12} and \eqref{2.13} is equivalent to showing that for $q$ satisfying \eqref{2.61}
\begin{equation}\label{2.62}
\bigl\|(H_V-\la^2+i \e\la)^{-1}f\bigr\|_q \leq C \la^{n(1/2-1/q)-3/2} \,\e^{-1/2} \|f\|_2,
 \, \, \, \text{if } \, \, 
\la \ge \Lambda.
\end{equation}
The proof of \eqref{2.12} and \eqref{2.13} also relies on the simple fact that if we let

\begin{equation}\label{2.18}
V_{\le N}(x)=
\begin{cases}V(x), \, \, \, \text{if } \, \,
|V(x)|\le N,
\\
0, \, \, \, \text{otherwise},
\end{cases}
\end{equation}
then, of course,
\begin{equation}\label{2.19}
\|V_{\le N}\|_{L^\infty}\le N,
\end{equation}
and, if $V_{>N}(x)=V(x)-V_{\le N}(x)$,
\begin{equation}\label{2.20}
\|V_{>N}\|_{L^{n/2}(\R^n)}\le \delta(N), 
\quad \text{with } \, \, \delta(N) \searrow
0, \, \, \, \text{as } \, \, N\to \infty,
\end{equation}
since we are assuming that $V\in L^{n/2}(\R^n)$.

Fix a smooth bump function $\beta\in C_0^\infty(1/4,4)$ with $\beta\equiv 1$ in $(1/2,2)$, and let $P=\sqrt{-\Delta}$, write
\begin{multline}\label{2.64}
(H_V-\la^2+i\e\la)^{-1}f
\\ \qquad \qquad\qquad= \beta(P/\la)(H_V-\la^2+i\e\la)^{-1}f+ \big(1-\beta(P/\la)\big)(H_V-\la^2+i\e\la)^{-1}f \\
=A+B. \qquad\qquad \qquad\qquad\qquad \qquad\qquad\qquad \qquad\qquad\qquad \qquad\qquad \quad
\end{multline}

To deal with the first term, note that since $m(\xi)=\la^{-\alpha}|\xi|^\alpha \beta(|\xi|/\la)$ is a Mikhlin multiplier, i.e., $$\big||\xi|^k\nabla^km(\xi)\big|\le C_k,\,\,\forall\,\, k\ge 0,\,\,\xi\neq 0.$$ 
Therefore, by the Mikhlin multiplier theorem(see, e.g., \cite{SFIO2} Theorem 0.2.6), we have
\begin{equation}\label{2.65}
\|(-\Delta)^{\frac{\alpha}{2}} \beta(P/\la)\|_{L^r\rightarrow L^r} \lesssim \la^{\alpha},\,\,\,\,\, \text{if}\,\,\, 1<r<\infty.
\end{equation} So by Sobolev estimates, \eqref{2.65} and \eqref{qm} for the exponent $r$, if $\alpha=n(\frac1r-\frac1q)$ and $0<\e<\la/2$, we have
\begin{equation}\label{2.66}
\begin{aligned}
\|A\|_q &\leq \|(\Delta_g)^{\frac{\alpha}{2}}\beta(P/\la)(H_V-\la^2+i\e\la)^{-1}f\|_r  \\
&\le \la^{n(\frac1r-\frac1q)} \|(H_V-\la^2+i\e\la)^{-1}f\|_r \\
&\le  C \,\la^{n(1/2-1/q)-3/2} \,\e^{-1/2}\|f\|_2,\,\,\,\text{if}\,\,\,\la\ge \Lambda_1=\Lambda(r).
\end{aligned}
\end{equation}

To bound the second term, we shall use the second resolvent formula \eqref{2.1} to write
\begin{equation}
\begin{aligned}
\big(1-\beta(P/\la)\big)(&H_V-\la^2+i \e\la)^{-1}f
\\
= &\big(1-\beta(P/\la)\big)(-\Delta-\la^2+i \e\la)^{-1}f
\notag
\\
&- \big(1-\beta(P/\la)\big)(-\Delta-\la^2+i \e\la)^{-1}
\bigl(V_{>N}\cdot (H_V-\la^2+i \e\la)^{-1}f\bigr)
\notag
\\
&- \big(1-\beta(P/\la)\big)(-\Delta-\la^2+i \e\la)^{-1}
\bigl(V_{\le N}\cdot (H_V-\la^2+i \e\la)^{-1}f\bigr)
\notag
\\
&=I-II-III. \notag
\end{aligned}
\end{equation}

Since the function $1-\beta(\tau/\la)$ vanishes in a dyadic neighborhood of $\la$, it is easy to see that 
$$\big(1-\beta(|\xi|/\la)\big)(|\xi|^2-\la^2+i \e\la)^{-1}(|\xi|^2+1)
$$
is a Mikhlin multiplier, and so
\begin{equation}\label{2.67}
\|\big(1-\beta(P/\la)\big)(-\Delta-\la^2+i \e\la)^{-1}f\|_q\le \|(-\Delta+1)^{-1}f\|_q,\,\,\,\,\, \text{if}\,\,\, 1<q<\infty.
\end{equation}
So by \eqref{2.67}, Sobolev estimates, and H\"older's inequality, we have for $q$ satisfying \eqref{2.61}
\begin{equation}\label{2.68}
\begin{aligned}
\|II\|_q&\le \|(-\Delta_g+1)^{-1}\big(V_{> N}\cdot (H_V-\la^2+i \e\la)^{-1}\big)f\|_q  \\
&\le \|V_{> N}\cdot (H_V-\la^2+i \e\la)^{-1}f\|_{p(q)} \\
&\le  C\delta(N)\|(H_V-\la^2+i \e\la)^{-1}f\|_q,
\end{aligned}
\end{equation}
where $\frac{1}{p(q)}-\frac1q=\frac2n$. By \eqref{2.20} we can
fix $N$ large enough so that
$C\delta(N)<1/4$, yielding the bounds
\begin{equation}\label{2.69}
\|II\|_q<\frac14 \, \bigl\|
(H_V-\la^2+i\e\la)^{-1}f
\bigr\|_q.
\end{equation}

For the third term $III$, note that as before, by the support property of $\beta$, it is straightforward to check that
$$\big(1-\beta(|\xi|/\la)\big)(|\xi|^2-\la^2+i \e\la)^{-1}\la^2
$$
is a Mikhlin multiplier, and so
\begin{equation}\label{2.70}
\|\big(1-\beta(P/\la)\big)(-\Delta-\la^2+i \e\la)^{-1}f\|_q\le C\la^{-2}\|f\|_q,\,\,\,\,\, \text{if}\,\,\, 1<q<\infty.
\end{equation}
Thus, by \eqref{2.19}, we have
\begin{equation}\label{2.71}
\begin{aligned}
\|III\|_q&\le C\la^{-2}\|\big(V_{\le N}\cdot (H_V-\la^2+i \e\la)^{-1}\big)f\|_q  \\
&\le CN\la^{-2}\|(H_V-\la^2+i \e\la)^{-1}f\|_q.
\end{aligned}
\end{equation}
If we choose $\Lambda_2$ such that $\Lambda_2^2=4CN$, we have
\begin{equation}\label{2.72}
\|III\|_q<\frac14 \, \bigl\|
(H_V-\la^2+i\e\la)^{-1}f
\bigr\|_q,\,\,\,\text{if}\,\,\,\la\ge\Lambda_2.
\end{equation}

We are left with estimating the first term $I$.  Note that for $q$ satisfying \eqref{2.61}, we have $\frac12-\frac1q \leq \frac2n$. By Sobolev estimates, if $\alpha=n(\frac12-\frac1q)$
\begin{multline}\label{2.73}
\|\big(1-\beta(P/\la)\big)(-\Delta_g-\la^2+i \e(\la)\la)^{-1}f\|_q \\
\le \|(-\Delta_g)^{\frac\alpha2}\big(1-\beta(P/\la)\big)(-\Delta_g-\la^2+i \e(\la)\la)^{-1}f\|_2.
\end{multline}
Since the symbol of the operator on the right side of \eqref{2.73} satisfies 
\begin{equation}\label{2.74}
\tau^\alpha \big(1-\beta(\tau/\la)\big)(\tau^2-\la^2+i \e\la)^{-1}\le \la^{\alpha-2},
\end{equation}
by the spectral theorem, this yields the bounds
\begin{equation}\label{2.75}
\|I\|_q\le \la^{n(\frac12-\frac1q)-2}\|f\|_2,
\end{equation}
which are better than 
those in
\eqref{2.62}, since we are assuming that $\e<\la/2$.

If we combine \eqref{2.66}, \eqref{2.69}, \eqref{2.72}, and \eqref{2.75}, we conclude that for $\la\ge\Lambda$ with $\Lambda=\max\{\Lambda_1,\Lambda_2\}$,
we have
\begin{multline}\label{2.76}
\|(H_V-\la^2+ i\e\la)^{-1}f\|_{L^q(\R^n)}
\le C \,\la^{n(1/2-1/q)-3/2} \,\e^{-1/2}\|f\|_{L^2(\R^n)} \\
+\frac12 \, \| (H_V-\la^2+i\e\la)^{-1}f\bigr\|_{L^q(\R^n)}.
\end{multline}
Note that as a consequence of Lemma~\ref{Sobv}, for $q$ satisfying \eqref{2.61}, we have 
\begin{equation}\label{2.77}
\bigl\| (H_V-\la^2+i\e\la)^{-1}f\bigr\|_{L^q(\R^n)}
<\infty \quad \text{if } \, \, f\in L^2(\R^n).
\end{equation}

Thus \eqref{2.76} and \eqref{2.62} are equivalent, and so the proof of Theorem~\ref{big} is complete.
\end{proof}

Now we shall prove \eqref{1.4} for the region $\Re \zeta< \Lambda^2$ by using the results we have just proved.
Note that by spectral theorem as well as \eqref{below}, we have
\begin{multline}\label{spectral}
\bigl\|(H_V-\zeta)^{-1}f\bigr\|_{L^2(\R^n)}
\le C_{\Lambda, \delta}
\bigl\|(H_V-\Lambda^2+i\Lambda^2/4)^{-1}f\bigr\|_{L^2(\R^n)}, \\
 \text{if } \, \, \Re \zeta < \Lambda^2, \,\,\,\text{and}\,\,\,\mathrm{dist }(\zeta, [-N_0,+\infty))
 \ge \delta.
\end{multline}
On the other hand, by a simple interpolation argument along with the trivial $L^2$ estimates, \eqref{qm}  implies
\begin{equation}\label{low}
\bigl\|(H_V-\Lambda^2+i\Lambda^2/4)^{-1}f\bigr\|_{L^q(\R^n)}
\le C_{\Lambda} \|f\|_{L^2(\R^n)}, 
 \text{if } \, \, \tfrac{2n}{n-1}<q<\tfrac{2n}{n-3}.
\end{equation}
By duality, this yields
\begin{multline}\label{low1}
\bigl\|(H_V-\zeta)^{-1}f\bigr\|_{L^2(\R^n)}
\le C
\|f\|_{L^p(\R^n)}, \\
 \text{if } \,\,  \tfrac{2n}{n+3}<p<\tfrac{2n}{n+1}, \, \, \Re \zeta < \Lambda^2, \,\,\,\text{and}\,\,\,\mathrm{dist }(\zeta, [-N_0,+\infty))
 \ge \delta.
\end{multline}

To prove \eqref{1.4} when $\Re \zeta< \Lambda^2$, by repeating the previous arguments, it suffice to show that
\begin{equation}\label{2.17}
\bigl\| (H_V-\zeta)^{-1}f\bigr\|_{L^q(\R^n)}
\le C\|f\|_{L^p(\R^n)},
\end{equation}
with $(p,q)$ satisfying \eqref{1.3} and \eqref{2.5}, and $\zeta$ satisfying the conditions in \eqref{low1}.

To exploit this we use the second resolvent formula
\eqref{2.1} to write
\begin{align}\label{2.21}
(H_V-\zeta)^{-1}f
= &(-\Delta-\zeta)^{-1}f
\\
&- (-\Delta-\zeta)^{-1}
\bigl(V_{>N}\cdot (H_V-\zeta)^{-1}f\bigr)
\notag
\\
&- (-\Delta-\zeta)^{-1}
\bigl(V_{\le N}\cdot (H_V-\zeta)^{-1}f\bigr)
\notag
\\
&=I-II-III. \notag
\end{align}

By the uniform Sobolev estimates \eqref{2.2} for the 
unperturbed operator we have
\begin{equation}\label{2.22}
\|I\|_q\le C\|f\|_p,
\end{equation}
as well as
\begin{equation}\label{2.22'}\tag{2.22$'$}
\|II\|_q\le C
\bigl\|V_{>N}\cdot
(H_V-\zeta)^{-1}f
\bigr\|_p \le 
C\|V_{>N}\|_{L^{n/2}}
\cdot
\bigl\| (H_V-\zeta)^{-1}f\bigr\|_{L^q},
\end{equation}
using H\"older's inequality and the fact that $\frac1p-\frac1q=\frac2n $ in the last step. By \eqref{2.20} we can
fix $N$ large enough so that
$C\|V_{>N}\|_{L^{n/2}}<1/2$, yielding the bounds
\begin{equation}\label{2.23}
\|II\|_q<\frac12 \, \bigl\|
(H_V-\zeta)^{-1}f
\bigr\|_q.
\end{equation}

To bound the third term $III$, note that by simple Sobolev estimates, for $q$ satisfying \eqref{2.5} and $\zeta$ satisfying conditions in \eqref{spectral}, we have
\begin{equation}\label{low2}
\bigl\|(-\Delta-\zeta)^{-1}f\bigr\|_{L^q(\R^n)}
\le C_{\Lambda} \|f\|_{L^2(\R^n)}.
\end{equation}
If we combine \eqref{low2} and \eqref{low1}, we conclude that
\begin{equation}\label{2.25}
\|III\|_q\le C_{\Lambda, \delta} N \, \|
f\|_p.
\end{equation}
Thus, \eqref{2.22}, \eqref{2.23} and \eqref{2.25} imply that 
\begin{multline}\label{low3}
\|(H_V-\zeta)^{-1}f\|_{L^q(\R^n)}
\le C
\|f\|_{L^p(\R^n)}+1/2\|(H_V-\zeta)^{-1}f\|_{L^q(\R^n)}, \\
 \text{if } \, \, \Re \zeta < \Lambda^2, \,\,\,\text{and}\,\,\,\mathrm{dist }(\zeta, [-N_0,+\infty))
 \ge \delta.
\end{multline}
By \eqref{2.7}, this implies \eqref{1.4} when $\Re \zeta<\Lambda^2$.

To conclude this section we shall give the proof of Theorem~\ref{lemm}.
\vspace{-0.1cm}
\begin{proof}[Proof of Theorem~\ref{lemm}] Write $V=V_{>N}+V_{\le N}$, we shall focus on the term $V_{\le N}$, since, as in \eqref{2.22'}, we can always fix $N$ large enough so that  
\begin{equation}\label{largen}
\|(-\Delta-\la^2+i\e\la)^{-1}\, V_{>N} f\|_{q}\le 1/4 \|f\|_q.
\end{equation}
Recall that for $\la\ge 1$ the Euclidean resolvent kernels equal 
\begin{equation}\label{kernel}
\begin{aligned}
(-\Delta-\la^2+i\e\la)^{-1}(x,y)&=(2\pi)^{-n}\int_{\R^n} \frac{e^{i\langle x-y,\xi \rangle}}{|\xi|^2-\la^2+i\e\la} d\xi \\
&=(2\pi)^{-n}\la^{n-2} \int_{\R^n} \frac{e^{i\la\langle x-y,\xi \rangle}}{|\xi|^2-1+i\e/\la} d\xi.
\end{aligned}
\end{equation}

Fix a real-valued Littlewood-Paley bump function $\beta\in C_0^\infty((1/2,2))$ satisfying 
\begin{equation}\label{bump}
1=\sum_{-\infty}^\infty \beta(2^{-j}s)\, \, \text{for } \, \, 
s>0, \quad \text{and } \, \beta(s)=1, \, \, s\in [3/4,5/4].
\end{equation}
Note that
 is straightforward to check that 
 $m(\xi)=(1-\beta(|\xi|))(|\xi|^2-1+i\e/\la)^{-1}$ is a symbol of order $-2$, i.e.,
$(\frac{d}{d r})^j  m(r)\le C_\alpha (r^2+1)^{-2-j}$. Therefore, by a simple integration by parts argument, 
\begin{equation}\label{k0}
\begin{aligned}
K_0(x,y)&=(2\pi)^{-n}\la^{n-2} \int_{\R^n} \bigl(1-\beta(|\xi|)\bigr)\frac{e^{i\la\langle x-y,\xi \rangle}}{|\xi|^2-1+i\e/\la} d\xi \\ &=
O\bigl( |x-y|^{2-n}(1+\la|x-y|)^{-N}\bigr), \,\,\forall \,\,N>0.
\end{aligned}
\end{equation}
Thus, by Young's inequality
\begin{equation}\label{k01}
\|K_0 (V_{\le N} f)\|_{q}\le  C\la^{-2} \|V_{\le N}  f\|_q \le C\la^{-2} N\|  f\|_q.
\end{equation}
By choosing $\Lambda_1$ such that $ C\Lambda_1^{-2} N \le 1/12$,  we have
\begin{equation}\label{k02}
\|K_0 (V_{\le N} f)\|_{q} \le 1/12\|  f\|_q, \,\,\text{if}\,\, \la\ge\Lambda_1.
\end{equation}
Here in \eqref{k01} and \eqref{k02} we are abusing notation a bit by letting $K_0$ denote the integral operator
with kernel $K_0(x,y)$ and we shall use similar notation in what follows.

On the other hand, if $|\xi|\approx 1$, by stationary phase methods or the Fourier transform formula for the sphere, we have
\begin{equation}\label{k1}
\begin{aligned}
K_1(x,y)&=
(2\pi)^{-n}\la^{n-2} \int_{\R^n} \beta(|\xi|)\frac{e^{i\la\langle x-y,\xi \rangle}}{|\xi|^2-1+i\e/\la} d\xi \\&=\begin{cases}
O(\la^{n-2}),\,\,\text{if}\,\, |x-y|< \la^{-1} \\
\sum_\pm e^{\pm i\la|x-y|} c_\pm(|x-y|),\,\,\text{if}\,\, |x-y|\ge \la^{-1}, 
\end{cases}
\end{aligned}
\end{equation}
where $|\frac{d^j}{ds^j}c_\pm(s)|\le B_j \la^{\frac{n-3}{2}}s^{-\frac{n-1}{2}-j}$ .

Now we split the kernel $K_1(x,y)$ as 
$$K_1(x,y)=\sum_{j=0}^\infty K_1^j(x,y),
$$
where 
$$ K_1^j(x,y)=\beta(|x-y|\la 2^{-j})K_1(x,y), \,\,\text{if}\,\, j>0,
$$
and 
$$ K_1^0(x,y)=\beta_0(|x-y|\la)K_1(x,y),
$$
with 
$$ \beta_0(s)=(1-\sum_{j=1}^\infty \beta (2^{-j} s)).
$$
Note that as a consequence of \eqref{k1} and Young's inequality, 
\begin{equation}\label{k11}
\|K_1^j (V_{\le N} f)\|_{q}\le  C\la^{-2}2^{j\frac{n+1}{2}} \|V_{\le N}  f\|_q \le C\la^{-2}2^{j\frac{n+1}{2}} N\|  f\|_q,
\end{equation}
if $2^j\le 2^{j_0} \approx \la^{\frac{2}{n+1}}$. Thus,
by choosing $\Lambda_2$ such that $ 2C\Lambda_2^{-1} N \le 1/12$,  we have
\begin{equation}\label{k12}
\sum_{j=0}^{j_0}\|K_1^j (V_{\le N} f)\|_{q} \le 1/12\|  f\|_q, \,\,\text{if}\,\, \la\ge\Lambda_2.
\end{equation}

On the other hand, for each fixed $j>0$, the operator $K_1^j$ satisfies the following bounds.

\begin{proposition}\label{oscillatory}
Let $n\ge 3$, and suppose that
\begin{equation}\label{stein}
1\le p \le 2, \quad q\ge(n+1)p^\prime/(n-1).
\end{equation}
Then
\begin{equation}\label{kj}
\|K_1^j f\|_{L^q(\R^n)}\le C \la^{-2+n(\frac1p-\frac1q)}2^{j(\frac{n+1}{2}-\frac np)} \|f\|_{L^p(\R^n)}.
\end{equation}
\end{proposition}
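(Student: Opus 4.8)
The plan is to recognize $K_1^j$ as (a piece of) a \emph{dyadically rescaled oscillatory integral operator} of the type governed by the Stein–Tomas / Carleson–Sjölin machinery, and to reduce the claimed bound \eqref{kj} to the classical estimate for the extension operator associated to the sphere. Concretely, on the support of $\beta(|x-y|\la 2^{-j})$ we have $|x-y|\approx 2^j/\la$, so from the second alternative in \eqref{k1} the kernel of $K_1^j$ has the form $\sum_\pm e^{\pm i\la|x-y|}\,a_\pm(x-y)$ with amplitude $a_\pm$ supported in $|x-y|\approx 2^j/\la$ and satisfying, by the stated bounds on $c_\pm$, the size estimate $|a_\pm(z)|\lesssim \la^{(n-3)/2}(2^j/\la)^{-(n-1)/2}=\la^{n-2}2^{-j(n-1)/2}$ together with the natural derivative bounds $|\partial^\gamma a_\pm(z)|\lesssim \la^{n-2}2^{-j(n-1)/2}(2^j/\la)^{-|\gamma|}$. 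Thus $K_1^j$ is $\la^{n-2}2^{-j(n-1)/2}$ times an operator whose kernel lives on an annulus of radius $\sim 2^j/\la$, with phase $e^{\pm i\la|x-y|}$ oscillating at frequency $\la$, i.e.\ with $\sim 2^j$ oscillations across the support.

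First I would rescale: set $z = (2^j/\la)w$, so that the operator becomes, up to the scalar factor and the Jacobian $(2^j/\la)^n$, an operator $T_{2^j}$ with kernel $e^{\pm i 2^j|w-w'|}b_\pm(w-w')$, where $b_\pm$ is supported in $|w|\approx 1$ and is a symbol of order $0$ uniformly in $j$. This is exactly the kernel whose $L^2\to L^2$ and $L^1\to L^\infty$ behavior is controlled by the decay of $\widehat{d\sigma}$: indeed $e^{i\mu|w|}\,b(w)$ with $b$ bump-supported away from the origin is, after a further localization, a superposition of pieces of the Fourier transform of surface measure on the sphere of radius $\mu=2^j$, so one has the fixed-time-type bound $\|T_{2^j}\|_{L^1\to L^\infty}\lesssim 2^{-j(n-1)/2}$ and trivially $\|T_{2^j}\|_{L^2\to L^2}\lesssim 1$ (by Plancherel, since the symbol is bounded). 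More efficiently, $T_{2^j}$ is an oscillatory integral operator of Carleson–Sjölin type with parameter $2^j$, and the sharp $L^p\to L^q$ bound for such operators — which is precisely the Stein–Tomas exponent condition $q\ge (n+1)p'/(n-1)$, $1\le p\le 2$ — gives $\|T_{2^j}\|_{L^p\to L^q}\lesssim 2^{-j\,\sigma(p,q)}$ with the exponent $\sigma(p,q)$ dictated by scaling and the endpoint $L^2\to L^{2(n+1)/(n-1)}$ estimate $\lesssim 2^{-jn/(n+1)}$. I would then interpolate this endpoint bound with the trivial $L^1\to L^\infty$ and $L^2\to L^2$ bounds (or quote the packaged statement, e.g.\ the Carleson–Sjölin theorem as in Chapter 2 of \cite{SFIO2}) to obtain $\|T_{2^j}\|_{L^p\to L^q}\lesssim 2^{-j(n-1)/2}\cdot 2^{j n(1/p-1/q)}$ on the stated range — note $n(1/p-1/q)\le (n-1)/2$ there so this is genuine decay relative to the $L^1\to L^\infty$ scale only after combining with the $L^1$ size bound; the bookkeeping is routine once the endpoint is in hand.

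Finally I would undo the two scalings. The scalar prefactor is $\la^{n-2}2^{-j(n-1)/2}$; the change of variables $z=(2^j/\la)w$ contributes $(2^j/\la)^n$ to the kernel but acts on $L^p\to L^q$ norms by $(2^j/\la)^{n - n/p + n/q}=(2^j/\la)^{n(1+1/q-1/p)}$ after accounting for how dilations scale $L^p$ norms; assembling these with the rescaled operator bound $\|T_{2^j}\|_{L^p\to L^q}\lesssim 2^{jn(1/p-1/q)}$ (on the $L^1\to L^\infty$-normalized scale, i.e.\ the full bound is $2^{-j(n-1)/2}$ times a gain) yields, after collecting powers of $\la$ and $2^j$, exactly $\la^{-2+n(1/p-1/q)}\,2^{j((n+1)/2 - n/p)}$, as claimed. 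The one point requiring care — and the step I expect to be the main obstacle — is verifying that the rescaled kernel $e^{\pm i2^j|w-w'|}b_\pm(w-w')$ really does obey the Carleson–Sjölin hypotheses \emph{uniformly in $j$} (nondegeneracy of the mixed Hessian of the phase $|w-w'|$ on the annulus $|w-w'|\approx 1$, and uniform symbol bounds on $b_\pm$ coming from the derivative estimates on $c_\pm$ after rescaling); granting that, the proposition is just the Stein–Tomas estimate in disguise, which is why the hypothesis \eqref{stein} is precisely the Stein–Tomas range.
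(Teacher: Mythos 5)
Your approach is the same as the paper's: the paper reduces the endpoint case $q=(n+1)p'/(n-1)$ to Lemma~5.4 of~\cite{sogge86}, which is itself proved by exactly the change of scale $x\mapsto (2^j/\la)X$ plus Stein's oscillatory integral theorem, does the $q=\infty$ case by Young's inequality using the support condition $|x-y|\approx 2^j\la^{-1}$, and interpolates. You are unpacking that same reduction, and your scaling framework is set up correctly (the prefactor $\la^{n-2}2^{-j(n-1)/2}$ from the amplitude, the Jacobian/dilation factor $r^{\,n-n/p+n/q}$ with $r=2^j/\la$, and the need for a uniform-in-$j$ Carleson--Sj\"olin bound on the normalized operator $T_{2^j}$ with kernel $e^{\pm i2^j|W-W'|}b_\pm(W-W')$). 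The final claimed estimate is also correct.

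However, the intermediate formulas you write for $\|T_{2^j}\|_{L^p\to L^q}$ are wrong and do not reproduce the answer if taken at face value. First, once you have pulled the size factor $\la^{n-2}2^{-j(n-1)/2}$ out as a scalar prefactor, the kernel of $T_{2^j}$ is $O(1)$ on the unit annulus, so $\|T_{2^j}\|_{L^1\to L^\infty}=O(1)$, not $O(2^{-j(n-1)/2})$; the $(n-1)/2$ decay comes from stationary phase and lives in the $L^2\to L^2$ bound (which is $O(2^{-j(n-1)/2})$, sharper than the $O(1)$ you quote). Second, the endpoint you quote, $\|T_{2^j}\|_{L^2\to L^{2(n+1)/(n-1)}}\lesssim 2^{-jn/(n+1)}$, happens to agree with the true bound only for $n=3$; in general Stein's theorem gives $\|T_\mu\|_{L^p\to L^q}\lesssim \mu^{-n/q}$ along $q=(n+1)p'/(n-1)$, i.e.\ $2^{-jn(n-1)/(2(n+1))}$ at $p=2$. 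Third, the interpolated formula $\|T_{2^j}\|_{L^p\to L^q}\lesssim 2^{-j(n-1)/2}\cdot 2^{jn(1/p-1/q)}$ is inconsistent with both endpoints and, if substituted into your bookkeeping ($\la^{n-2}2^{-j(n-1)/2}\cdot r^{\,n-n/p+n/q}\cdot\|T_{2^j}\|$), gives $2^{j(n+1)/2}$ rather than $2^{j((n+1)/2-n/p)}$. Replacing it with the correct $\|T_{2^j}\|_{L^p\to L^q}\lesssim 2^{-jn/q}$ makes the powers of $\la$ and $2^j$ collect to exactly $\la^{-2+n(1/p-1/q)}2^{j((n+1)/2-n/p)}$, as one checks directly, and then the rest of your plan (verify uniform Carleson--Sj\"olin hypotheses, interpolate with the Young bound at $q=\infty$) goes through.
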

When $ q=(n+1)p^\prime/(n-1)$, \eqref{kj} is essentially Lemma 5.4 in \cite{sogge86}, which can be proved by standard change of scale arguments and an application of Stein's oscillatory integral theorem. While if $q=\infty$, the above estimates follows from Young's inequality noticing that $K_1^j$ is supported in the set $|x-y|\approx \la^{-1}2^j$. The remaining inequalities in \eqref{kj} now follow from interpolation. 

The intersection of the two regions \eqref{1.3} and \eqref{stein} are pairs of exponents $(p,q)$ satisfying
$ \tfrac1p-\tfrac1q=\tfrac2n, \,\,\,   \tfrac{2n^2}{n^2+n+2}<p<\tfrac{2n}{n+1}
$.
By applying \eqref{kj} along the uniform Sobolev line $\tfrac1p-\tfrac1q=\tfrac2n$ and duality, we have for $(p,q)$ satisfying \eqref{1.3},
\begin{equation}\label{kj1}
\|K_1^j f\|_{L^q(\R^n)}\le C 2^{-j\delta(p)} \|f\|_{L^p(\R^n)}, 
\end{equation}
where $\delta(p)>0$ is a fixed constant which depends on $p$.

A combination of \eqref{kj1} and H\"older's inequality yields
\begin{equation}\label{kj3}
\|K_1^j (V_{\le N} f)\|_{q}\le  C 2^{-j\delta(p)} \|V_{\le N}  f\|_p \le  C 2^{-j\delta(p)} \|V\|_{n/2}\|  f\|_q.
\end{equation}
For each fixed $p$ with $\frac1p-\frac1q=\frac2n$, if we choose $\Lambda_3$ large enough such that $$ (1-2^{-\delta(p)})^{-1}C\Lambda_3^{-\frac{2\delta(p)}{n+1}} \|V\|_{n/2} \le 1/12,$$ after summing over $2^j \ge 2^{j_0}\approx \la^{\frac{2}{n+1}}$,
\begin{equation}\label{kj4}
\sum_{j=j_0}^{\infty}\|K_1^j (V_{\le N} f)\|_{q} \le 1/12\|  f\|_q, \,\,\text{if}\,\, \la\ge\Lambda_3.
\end{equation}
If we combine \eqref{largen}, \eqref{k02}, \eqref{k12} and \eqref{kj4}, we conclude that for $\la\ge \max(\Lambda_1, \Lambda_2, \Lambda_3)$ we have
\begin{equation}\label{kk}
\|(-\Delta-\la^2+i\e\la)^{-1}\, (V f)\|_{q}\le 1/2 \|f\|_q, \,\,\text{if}\,\, \tfrac{2n}{n-1}<q<\tfrac{2n}{n-3},
\end{equation}
which completes the proof of Theorem~\ref{lemm}.
\end{proof}

\noindent\textbf{Remark.}
The above proof of Theorem~\ref{lemm} does not make full use of the condition that $V\in L^{n/2}(\R^n)$ and
it can be easily adapted to prove bounds for certain potentials in $L^{n/2}_{\text{loc}}$. For example, in \eqref{k01} and \eqref{k11}, instead of using $L^q\rightarrow L^q$ type estimates, we can use their $L^p\rightarrow L^q$ analogs with $\frac1p-\frac1q=\frac2n$, and exploit the locally supported condition for the operators $K_0$ and $K_1^j$ to remove the $L^\infty$ condition on $V$ used there.

Specifically, if we assume that
\begin{equation}\label{newassump}
\lim_{r\rightarrow 0}\sup_x\|V\|_{L^{n/2}(B_x(r))}\rightarrow 0
\end{equation}
as well as 
\begin{equation}\label{newassump1}
\sup_x\|V\|_{L^{n/2}(B_x(R))}\le C_\e R^\e, \, \, \forall \,\,\e>0, \, \, R>1,
\end{equation}
then one could decompose the resolvent operator $(-\Delta-\la^2+i\e\la)^{-1}$ as a sum of dyadically localized operators and repeat the above arguments to get the same conclusion.  Also, the condition
 \eqref{newassump} itself ensures that the operator $H_V$ defines a self adjoint operator which is bounded from below. Thus, it is possible to get the same results in Theorem~\ref{unifSob} assuming that $V$ satisfies \eqref{newassump} and \eqref{newassump1}. 
 Recall that in Ionescu and Jerison~\cite{2003absence}, a special case of the conditions the authors used to guarantee the absence of positive eigenvalues was that 
$V\in L_{loc}^{n/2}(\R^n)$ and 
\begin{equation}
\nonumber \lim_{R\rightarrow\infty} \|V\|_{L^{n/2}(R<|x|<2R)}=0,
\end{equation}
which implies \eqref{newassump} and \eqref{newassump1} with $R^\e$ replaced by $\log R$.

On the other hand, by an interpolation theorem of Stein and Weiss, i.e., Chapter V, Theorem 3.15 in \cite{stein2016introduction}, it is straightforward to show that, e.g., \eqref{kj3} still holds with $\|V\|_{L^{n/2}}$ 
replaced by $\|V\|_{L^{n/2, \infty}}$.   Thus the conditions in \eqref{newassump} and \eqref{newassump1} can further be weakened to 
\begin{equation}\label{newassump2}
\lim_{r\rightarrow 0}\sup_x\|V\|_{L^{n/2,\infty}(B_x(r))}\rightarrow 0
\end{equation}
as well as 
\begin{equation}\label{newassump3}
\sup_x\|V\|_{L^{n/2,\infty}(B_x(R))}\le C_\e R^\e, \, \, \forall \,\,\e>0, \, \, R>1.
\end{equation}

\newsection{Strichartz estimates for Schr\"odinger operators with singular potentials in $\mathbb{R}^n$.}\label{stsec}

In this section we shall prove Theorem~\ref{hvthm}. We shall first give the proof of \eqref{localstrichartz}, which is an easy consequence of Duhamel's principle and a bootstrap argument.

If $V\in L^{n/2}(\mathbb{R}^n)+L^{\infty}(\mathbb{R}^n)$, by adapting the arguments in the appendix of \cite{blair2020uniform}, one can show that $H_V=-\Delta+V$ defines a self-adjoint operator which 
is bounded from below. If necessary, we may add a constant to $V$ so that $H_V\ge 0$.
 This will not affect
our estimates, since, if we, say, add the constant
$N$ to $V$ the two different Schr\"odinger operators
will agree up to a factor $e^{\pm itN}$.
By spectral theorem, we have
\begin{equation*}
\|e^{-itH_V}\|_{L^2(\R^n)\to L^2(\R^n)}=O(1),\,\, \forall\,\, t\in \R.
\end{equation*}
Thus, it suffices to prove \eqref{localstrichartz} for the other endpoint, i.e.,
\begin{equation}\label{local1}
\bigl\|e^{-itH_V}f\bigr\|_{L^2_tL^{\frac{2n}{n-2}}_x([0,1]\times \R^n)}
\lesssim \|f\|_{L^2(\mathbb{R}^n)}.
\end{equation}

To proceed, note that for $V\in L^{n/2}(\mathbb{R}^n)+L^{\infty}(\mathbb{R}^n)$, as in \eqref{2.18}--\eqref{2.20}, we can always write $V=V_{\le N}+V_{>N}$,
with $$\|V_{\le N}\|_{L^\infty}\le N,$$ while $$\|V_{> N}\|_{L^{n/2}}=\delta(N) \rightarrow 0$$ as $N\rightarrow \infty$.

By Duhamel's formula, we can write
\begin{equation}\label{duhamel}
\begin{aligned}
e^{-itH_V}f=&e^{it\Delta}f+i\int_0^t e^{i(t-s)\Delta}V_{\le N}e^{-isH_V}fds+i\int_0^t e^{i(t-s)\Delta}V_{>N}e^{-isH_V}fds \\
=& I+II+III.
\end{aligned}
\end{equation}

Note that, by the Keel-Tao \cite{KT} theorem, we have
\begin{equation}\label{keel1}
\bigl\|e^{it\Delta}f\bigr\|_{L^2_tL^{\frac{2n}{n-2}}_x([0,1]\times \R^n)}
\le C\|f\|_{L^2(\mathbb{R}^n)},
\end{equation}
as well as 
\begin{equation}\label{keel2}
\bigl\|\int_0^t e^{i(t-s)\Delta}F(s,\cdot)ds\bigr\|_{L^2_tL^{\frac{2n}{n-2}}_x([0,1]\times \R^n)}
\le C\|F\|_{L_t^2L_x^{\frac{2n}{n+2}}([0,1]\times \R^n)}.
\end{equation}

Thus, the first term $I$ is bounded by the right side of \eqref{local1}, and if we choose $N$ large enough such that $C\delta(N) \le 1/2$ for the constant $C$ appeared in \eqref{keel2}, we have
\begin{equation}\label{III}
\begin{aligned}
\|III\|_{L^2_tL^{\frac{2n}{n-2}}_x([0,1]\times \R^n)}\le& C\|V_{>N}e^{-itH_V}f\|_{L_t^2L_x^{\frac{2n}{n+2}}(\mathbb{R}^n)} \\
\le& C \|V_{>N}\|_{L^{n/2}(\R^n)} \bigl\|e^{-itH_V}f\bigr\|_{L^2_tL^{\frac{2n}{n-2}}_x([0,1]\times \R^n)} \\
\le& 1/2 \bigl\|e^{-itH_V}f\bigr\|_{L^2_tL^{\frac{2n}{n-2}}_x([0,1]\times \R^n)}.
\end{aligned}
\end{equation}

To estimate $II$ we shall use Minkowski inequality and \eqref{keel1}. More specifically,
\begin{equation}\label{II}
\begin{aligned}
\|II\|_{L^2_tL^{\frac{2n}{n-2}}_x([0,1]\times \R^n)}\le& C\int_0^1\| e^{it\Delta}(e^{is\Delta}V_{\le N}e^{-isH_V})\|_{L^2_tL^{\frac{2n}{n-2}}_x([0,1]\times \R^n)}ds \\
\le&  C\int_0^1\|e^{is\Delta}V_{\le N}e^{-isH_V}f\|_{L^2_x(\R^n)}ds \\
\le&  CN\int_0^1\|e^{-isH_V}f\|_{L^2_x(\R^n)}ds \\
\le& CN\|f\|_{L^2(\R^n)}.
\end{aligned}
\end{equation}

If $f\in H^1(\R^n)$, then by Sobolev estimates and (6.9) in the appendix of \cite{blair2020uniform}, $e^{itH_V}f\in L^{\frac{2n}{n-2}}$, which implies
$$\bigl\|e^{-itH_V}f\bigr\|_{L^2_tL^{\frac{2n}{n-2}}_x([0,1]\times \R^n)}<\infty.
$$
Thus, a combination of \eqref{keel1}, \eqref{III}, \eqref{II} and a bootstrap argument gives us
\begin{equation}\label{dense}
\bigl\|e^{-itH_V}f\bigr\|_{L^2_tL^{\frac{2n}{n-2}}_x([0,1]\times \R^n)}\le C_V \|f\|_{L^2(\R^n)}, \,\,\text{if}\,\, f\in H^1(\R^n).
\end{equation}
Since $H^1$ is dense in $L^2$, the proof of \eqref{local1} is complete.

The proof of \eqref{strichartz} requires more work since we can not bound the term $II$ as before if the support of $s$ is unbouned. To proceed, we shall follow the strategy in a recent work \cite{huang2020quasimode} by the authors and prove an analogous dyadic estimates which will allow us to obtain \eqref{strichartz}. Also, we have to show that the Littlewood-Paley estimates for $H_V$ are valid for the exponents $q$ as in \eqref{i.2}.  This was done in the appendix of \cite{huang2020quasimode} in the setting of compact manifolds
and the same argument can be used to handle the Euclidean space $\R^n$.

As in \cite{huang2020quasimode}, the proof of dyadic variants of \eqref{i.2} rely on certain microlocalized ``quasimode" estimates for the unperturbed scaled Schr\"oding operators with a damping term,
\begin{equation}\label{damp}
i\la\partial_t+\Delta+i\e\la.
\end{equation}
Unlike the case of compact manifold, we need to allow the damping term $\e\la$ to be arbitrary small and prove results that are uniform in $\e$ in order to get global Strichartz estimates. Furthermore, since the Littlewood-Paley operators associated with $-\Delta$ may not be compatible with the corresponding ones for $H_V=-\Delta+V(x)$ with V singular, we shall also introduce the Littlewood-Paley operators acting on the time variable 
$$\beta(-D_t/\la)h(x)=(2\pi)^{-1}
\int_{-\infty}^\infty e^{it\tau}
\beta(-\tau/\la) \, \Hat h(\tau)\, d\tau,
$$
with $\beta$ defined as in \eqref{bump}.

More specifically, to prove \eqref{strichartz} our main estimates will concern solutions of the scaled inhomogeneous Schr\"odinger equation with damping term
\begin{equation}\label{i.11}
(i\la \partial_t+\Delta_g+i\e\la)w(t,x)=F(t,x), \quad
w(0,\cd)=0.
\end{equation}
It will be convenient to assume that the ``forcing term'' here satisfies
\begin{equation}\label{i.12}
F(t,x)=0, \quad t\notin [0,\e^{-1}].
\end{equation}

The result that we shall need in order to prove \eqref{strichartz} is the following.
\begin{theorem}\label{lemmm}  Let $n\ge 3$, suppose
$F$ satisfies the support assumption in \eqref{i.12}
and $w$ solves \eqref{i.11}.  Then for $\la\ge 1$, we have
\begin{equation}\label{i.13}
\bigl\|\beta(-D_t/\la)w\bigr\|_{L^2_tL^{\frac{2n}{n-2}}_x(\R\times \R^n)}
\lesssim \la^{-1/2} \e^{-1/2}\|F\|_{L^2_{t,x}([0,\e^{-1}]\times \R^n)},
\end{equation}
and also
\begin{equation}\label{i.14}
\bigl\|\beta(-D_t/\la)w\bigr\|_{L^2_tL^{\frac{2n}{n-2}}_x(\R\times \R^n)}
\lesssim 
\|F\|_{L^{2}_tL^{\frac{2n}{n+2}}_x([0,\e^{-1}]\times \R^n)}.
\end{equation}
Additionally, if for all time $t$, $\supp F(t, \cdot)\subset B_R$, with $B_R$ being a fixed ball of radius $R\ge1$ in $\R^n$ centered at origin,
we have 
\begin{equation}\label{i.15}
\bigl\|\beta(-D_t/\la)w\bigr\|_{L^2_tL^{\frac{2n}{n-2}}_x(\R\times B_R)}
\le C_R \la^{-1/2}
\|F\|_{L^{2}_tL^{2}_x([0,\e^{-1}]\times \R^n)}.
\end{equation}
\end{theorem}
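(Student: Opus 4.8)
The plan is to reduce each of \eqref{i.13}--\eqref{i.15} to a statement about the rescaled, damped retarded Duhamel operator and then invoke the Keel--Tao inhomogeneous Strichartz estimates \cite{KT} for the free Schr\"odinger evolution (for \eqref{i.13}--\eqref{i.14}) and the limiting absorption principle for $-\Delta$ (for \eqref{i.15}), following \cite{huang2020quasimode}. Since $w(0,\cd)=0$ and $F(t,\cd)\equiv 0$ for $t<0$, Duhamel's formula gives the retarded representation $w=K_\la * F$, where $K_\la*F(t,x)=\tfrac1{i\la}\int_{s<t}e^{i(t-s)\Delta/\la}e^{-\e(t-s)}F(s,x)\,ds$ is an operator-valued convolution in $t$; moreover $\beta(-D_t/\la)$ is a convolution in $t$ with an $L^1$ kernel, hence bounded on $L^2_tL^q_x$, and it commutes with $K_\la*$ and with the spatial cutoff $\1_{B_R}$. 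The change of variables $t\mapsto\la t$, together with the rescaling $-\Delta\mapsto -\Delta/\la$ already built into the equation, turns $K_\la*$ into a constant multiple of the standard free retarded Duhamel operator, all powers of $\la$ being accounted for by this scaling; in particular the rescaled homogeneous estimate reads $\|e^{it\Delta/\la}f\|_{L^2_tL^{2n/(n-2)}_x}\lesssim\la^{1/2}\|f\|_{L^2}$. Finally $0<e^{-\e(t-s)}\le1$ for $t>s$, so the damping can be discarded pointwise in $t$ whenever the propagator is estimated in absolute value, and otherwise only improves the dispersive bound, so it is harmless throughout.

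\textbf{Proof of \eqref{i.14}.} After the above rescaling this is exactly the Keel--Tao inhomogeneous Strichartz estimate for the doubly endpoint admissible pair: output side $(p,q)=(2,\tfrac{2n}{n-2})$ and input side $(2,\tfrac{2n}{n+2})$. The factor $\la^{1/2}$ from the output Strichartz estimate and the $\la^{-1/2}$ from (the dual of) the input Strichartz estimate combine with the $\la^{-1}$ in front of the Duhamel integral to yield the clean constant in \eqref{i.14}; placing $\beta(-D_t/\la)$ outermost and using its $L^2_tL^q_x$-boundedness disposes of the time-frequency cutoff. (For the non-unitary propagator $e^{i\sigma\Delta/\la}e^{-\e\sigma}$ one invokes the variant of the Keel--Tao argument that uses only the uniform $L^2\to L^2$ energy bound and the uniform $L^1\to L^\infty$ dispersive bound.)

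\textbf{Proof of \eqref{i.13}.} Here I would exploit the support hypothesis \eqref{i.12} rather than the endpoint machinery. By Cauchy--Schwarz in $t$, $\|F\|_{L^1_tL^2_x}\le\e^{-1/2}\|F\|_{L^2_{t,x}([0,\e^{-1}]\times\Rn)}$. On the other hand, Minkowski's inequality in $s$ applied to $w=K_\la*F$ together with the rescaled homogeneous Strichartz estimate gives $\|w\|_{L^2_tL^{2n/(n-2)}_x}\le\tfrac1\la\int\|e^{it\Delta/\la}F(s)\|_{L^2_tL^{2n/(n-2)}_x}\,ds\lesssim\la^{-1/2}\|F\|_{L^1_tL^2_x}$, and $\beta(-D_t/\la)$ is again absorbed by its $L^2_tL^q_x$-boundedness. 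Combining the two bounds yields \eqref{i.13}; note that the $\e^{-1/2}$ is precisely the price of the Cauchy--Schwarz step, reflecting the non-admissibility of the time pair $(2,2)$.

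\textbf{Proof of \eqref{i.15}, and the main obstacle.} Now the $\e^{-1/2}$ must be replaced by a ball-dependent constant, so the crude bound $\|F\|_{L^1_tL^2_x}\le\e^{-1/2}\|F\|_{L^2_{t,x}}$ is forbidden and one must use the spatial localization of both $F$ and the output. Decompose $F=\sum_j F_j$, with $F_j$ the restriction of $F$ to successive time intervals $I_j$ of length $\sim R$, so $\sum_j\|F_j\|_{L^2_{t,x}}^2=\|F\|_{L^2_{t,x}}^2$. For each $j$, the argument of \eqref{i.13} applied on an interval of length $R$ gives $\|\1_{B_R}\beta(-D_t/\la)(K_\la*F_j)\|_{L^2_tL^q_x}\le\|K_\la*F_j\|_{L^2_tL^q_x}\lesssim\la^{-1/2}\|F_j\|_{L^1_tL^2_x}\le\la^{-1/2}R^{1/2}\|F_j\|_{L^2_{t,x}}$ --- with $R^{1/2}$ in place of $\e^{-1/2}$, since $F_j$ lives on an interval of length $R$. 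The crucial point is that, because $\beta(-D_t/\la)$ restricts (up to an easily handled part where the scaled resolvent symbol is $O(\la^{-2})$) to spatial frequencies $\approx\la$, at which Schr\"odinger wave packets for $i\la\partial_t+\Delta$ travel with group speed $\approx 1$, the function $g_j:=\1_{B_R}\beta(-D_t/\la)(K_\la*F_j)$ is --- up to a rapidly decaying tail --- supported in the time interval $I_j+[0,CR]$; hence the $g_j$ have bounded overlap in $t$ and are almost orthogonal in $L^2_t$, so that
\begin{equation*}
\bigl\|\1_{B_R}\beta(-D_t/\la)w\bigr\|_{L^2_tL^q_x}^2\lesssim\sum_j\|g_j\|_{L^2_tL^q_x}^2\lesssim\sum_j\la^{-1}R\,\|F_j\|_{L^2_{t,x}}^2=\la^{-1}R\,\|F\|_{L^2_{t,x}}^2,
\end{equation*}
which is \eqref{i.15} with $C_R\sim R^{1/2}$. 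The delicate ingredient --- and the main obstacle --- is making precise this approximate finite speed of propagation for the locally observed, frequency-$\la$ evolution, i.e. showing via non-stationary phase that the tails of $\1_{B_R}e^{i(t-s)\Delta/\la}(\text{freq-}\la\text{ data in }B_R)$ for $t-s\gg R$ are negligible; alternatively one can obtain the $\e$-free $L^2_{t,x}$ bound $\|\1_{B_R}\beta(-D_t/\la)w\|_{L^2_{t,x}}\le C_R\la^{-1}\|F\|_{L^2_{t,x}}$ directly from the uniform (in $\e\ge0$) limiting absorption estimate $\|\1_{B_R}(-\Delta-\mu(\tau)^2-i\e\la)^{-1}\1_{B_R}\|_{L^2\to L^2}\le C_R\mu(\tau)^{-1}$, $\mu(\tau)=\sqrt{-\la\tau}\approx\la$ on $\supp\beta(-\cd/\la)$, and Plancherel in $t$, and then upgrade the spatial exponent by the same local-in-time patching. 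Either way this step is carried out as in \cite{huang2020quasimode}.
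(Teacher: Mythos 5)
Your proposal is correct and follows essentially the same route as the paper: Keel--Tao for \eqref{i.14}, Cauchy--Schwarz in $t$ on scale $\e^{-1}$ plus the homogeneous Strichartz estimate for \eqref{i.13}, and for \eqref{i.15} a time decomposition at scale $R$ combined with finite speed of propagation (non-stationary phase) at spatial frequency $\approx\la$ after the $\beta(-D_t/\la)$ localization, with the off-frequency part absorbed because the scaled resolvent symbol is $O(\la^{-2})$ there. The only cosmetic differences are that the paper makes the spatial frequency cutoff explicit via $\tb^2(P/\la)$ versus $1-\tb^2(P/\la)$ and implements the time decomposition on the Duhamel kernel with a partition of unity $\eta_j(t)\eta_k(s)$ rather than on $F$ itself, but these amount to the same almost-orthogonality argument.
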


Inequalities \eqref{i.13} and \eqref{i.14} are analogs of those in Theorem~1.2 in \cite{huang2020quasimode}, which, as we shall see later, can be proved by using a similar argument.  Inequality \eqref{i.15} is the main new ingredient, which will allow us to deal with forcing terms involving bounded and compactly supported potentials.  Also, compared with Theorem~1.2 in \cite{huang2020quasimode}, we only consider the special case $p=2$, $q=\frac{2n}{n-2}$ here for the sake of simplicity since we are assuming that $n\ge3 $ where the endpoints are always admissible . 
In other words, this endpoint Strichartz estimate implies all the others by interpolating with the trivial $L^2$-estimate.

Before giving the proof of Theorem \eqref{lemmm}, which we shall postpone until the end of the section, let us see how we can use the above inequalities to prove \eqref{strichartz}. 
Recall that under the assumption \eqref{def}, $H_V$ defines a self-adjoint operator and, as noted before, we may assume that
\begin{equation}\label{d.14}
H_V\ge 0.
\end{equation}

To proceed, we shall require the following multiplier bounds associated to the operator $H_V$.
\begin{proposition}\label{multiplier}
Let $V\in L^{n/2}(\R^n)$ be real valued, and suppose that 
\begin{equation}\label{multi}
1<q<\infty \,\,for\,\, n=3,4\,\, or \,\,2n/(n+4)<q<2n/(n-4) \, \, n\ge5.
\end{equation}
Then if $m\in C^\infty (\R_+)$ is a Mikhlin-type multiplier, i.e.,
\begin{equation}\label{h.1}
|\partial_\tau^j m(\tau)|\le C(1+\tau)^{-j}, \tau >0, \, \, \,
0\le j\le n/2 +1,
\end{equation}
we have
\begin{equation}\label{multineq}
\bigl\|m(\sqrt{H_V})f\bigr\|_{L^q(\R^n)}
\le C_q\|f\|_{L^q(\R^n)} ,
\end{equation}
as well as
\begin{equation}\label{lpest}
 \|h\|_{L^q(\R^n)} \le C_{q,V}\, 
\|\beta_0(\sqrt{H_V})h\|_{L^q(\R^n)}
+
\bigl\| \, \bigl(\, \sum_{k=1}^\infty
\, \bigl|\beta(\sqrt{H_V}/2^k)h\bigr|^2\, \bigr)^{1/2} \,
\bigr\|_{L^q(\R^n)},
\end{equation}
for $q$, $V$ as above and $\beta$ being the Littlewood-Paley bump function satisfying \eqref{bump}.
\end{proposition}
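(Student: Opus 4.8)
\smallskip
\noindent\textbf{Proof strategy for Proposition~\ref{multiplier}.}
The plan is to follow the appendix of \cite{huang2020quasimode}, where the analogous statement is proved on compact manifolds, and transplant the argument to $\R^n$. First I would reduce the Littlewood--Paley inequality \eqref{lpest} to the multiplier bound \eqref{multineq}. Granting \eqref{multineq}: if $\tilde\beta$ is a fattened Littlewood--Paley bump with $\tilde\beta\beta=\beta$, then for every choice of signs $\pm_k$ the function $\sum_{k\ge1}\pm_k\,\tilde\beta(\tau/2^k)$ satisfies \eqref{h.1} with constants independent of the signs (the frequency supports being lacunary, only boundedly many terms are nonzero at any $\tau$); applying \eqref{multineq} and Khintchine's inequality gives the ``easy'' half of Littlewood--Paley, namely $\bigl\|\bigl(\sum_{k\ge1}|\tilde\beta(\sqrt{H_V}/2^k)g|^2\bigr)^{1/2}\bigr\|_q\lesssim\|g\|_q$ for $q$ as in \eqref{multi}. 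Then, writing $1=\beta_0(\tau)+\sum_{k\ge1}\beta(\tau/2^k)$ (cf.\ \eqref{bump}), pairing $h$ with $g\in L^{q'}$, using the self-adjointness of $H_V$ and $\beta=\beta\tilde\beta$, and applying Cauchy--Schwarz in $k$ and H\"older in $x$, one obtains \eqref{lpest}; here one uses that the range \eqref{multi} is symmetric about the exponent $2$, so $q'$ is admissible whenever $q$ is.

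It remains to prove \eqref{multineq}. I would split $m(\sqrt{H_V})=m(\sqrt{-\Delta})+\bigl(m(\sqrt{H_V})-m(\sqrt{-\Delta})\bigr)$; the first term is bounded on $L^q$, $1<q<\infty$, by the classical Mikhlin--H\"ormander theorem. For the difference, the key structural input is that $\cos(t\sqrt{H_V})$ has unit propagation speed --- since $V$ enters $H_V$ only through multiplication, this follows in the usual way (a Duhamel iteration off $\cos(t\sqrt{-\Delta})$, or the energy identity). I would then decompose $m=\sum_{k\in\mathbb Z}m_k$ with $m_k=m\cdot\beta(\cdot/2^k)$ supported where $\tau\sim2^k$, represent each $m_k(\sqrt{H_V})$ through the cosine transform, and decompose further in the dual variable as $m_k=\sum_{\nu\ge0}m_k^\nu$ with $\widehat{m_k^\nu}$ supported in $\{|t|\lesssim2^{\nu-k}\}$ and $\|m_k^\nu\|_\infty\lesssim2^{-\nu\sigma_0}$ for some $\sigma_0>0$ controlled by the number of derivatives allowed in \eqref{h.1}. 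Finite propagation speed then forces the Schwartz kernel of $m_k^\nu(\sqrt{H_V})$ to be supported in $\{|x-y|\lesssim2^{\nu-k}\}$, while this paper's quasimode estimates \eqref{1.9} --- summed over unit spectral bands using their $L^2$ orthogonality --- give $\|m_k^\nu(\sqrt{H_V})\|_{L^2(\R^n)\to L^q(\R^n)}\lesssim2^{-\nu\sigma_0}\,2^{kn(1/2-1/q)}$ throughout the range \eqref{multi}. Decomposing $\R^n$ into cubes of side $2^{\nu-k}$ and combining this $L^2\to L^q$ bound with H\"older on each cube and the locality of the kernel converts it into $\|m_k^\nu(\sqrt{H_V})\|_{L^q\to L^q}\lesssim2^{-\nu\sigma}$ for some $\sigma>0$, uniformly in $k$, which is summable in $\nu$; running the estimate $\ell^2_k$-valued and invoking the kernel localization, a square-function / restricted-range singular-integral argument then assembles the pieces into \eqref{multineq} for $q$ as in \eqref{multi}. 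Finally, the part of the spectrum below $\Lambda^2$, where \eqref{1.9} degenerates, I would handle separately on the fixed spatial scale $\Lambda^{-1}$ using \eqref{1.10} in place of \eqref{1.9}; alternatively one verifies directly, in the spirit of Lemma~\ref{Sobv} and the second resolvent formula \eqref{2.1}, that $(\Lambda^2+H_V)^{-1}$ is bounded on $L^q$ in the range \eqref{multi} by expanding it in a Neumann series off $(\Lambda^2-\Delta)^{-1}$ after splitting $V=V_{\le N}+V_{>N}$ and taking $N$, then $\Lambda$, large, which controls any smooth function of $H_V$ with bounded spectral support (and also sidesteps the even-extension issue for $m$ near $\tau=0$).

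The step I expect to be the main obstacle is the passage from the $L^2\to L^q$ bound for $m_k^\nu(\sqrt{H_V})$ to the uniform $L^q\to L^q$ bound: this is exactly where one must use both finite propagation speed (the locality of $m_k^\nu(\sqrt{H_V})$) and the restriction \eqref{multi}, which is forced by --- and dual to --- the admissible range \eqref{1.6} of the quasimode estimates, and it is the one point at which this paper's estimates \eqref{1.9}--\eqref{1.10} are genuinely needed. One must also take care that the cutoffs at scale $2^{\nu-k}$ stay compatible with finite propagation speed and that the low-frequency block is cleanly separated, since near the bottom of the spectrum the cluster bounds break down; apart from these points the argument is the same as in the appendix of \cite{huang2020quasimode}, and it goes through verbatim in $\R^n$.
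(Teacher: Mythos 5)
Your proposal follows essentially the same route as the paper: both reduce the matter to verifying (i) finite propagation speed of $\cos(t\sqrt{H_V})$ and (ii) the dyadic Bernstein/cluster bounds \eqref{h.3}, which follow from \eqref{qm} and the spectral theorem, and then invoke the localization argument of the appendix of \cite{huang2020quasimode} to conclude \eqref{multineq}, followed by the Rademacher--Khintchine argument to deduce \eqref{lpest}. Two small remarks: the initial split $m(\sqrt{H_V})=m(\sqrt{-\Delta})+\bigl(m(\sqrt{H_V})-m(\sqrt{-\Delta})\bigr)$ is never actually exploited --- the cosine-transform plus finite-speed argument you then describe applies directly to $m(\sqrt{H_V})$ --- and for finite propagation speed with singular $V\in L^{n/2}$ a Duhamel iteration off $\cos(t\sqrt{-\Delta})$ is delicate to make rigorous, whereas the paper cites Coulhon--Sikora \cite{CouS}, which yields \eqref{h.2} abstractly for any nonnegative self-adjoint $H_V$ with $V\in L^1_{\mathrm{loc}}$.
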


The proof of \eqref{multineq} was given in the appendix of \cite{huang2020quasimode} in the compact manifold case.  We can
repeat the arguments and see that, to obtain \eqref{multineq} in the Euclidean setting,
it suffices to check the following two properties hold for the operator $H_V$ in $\R^n$.

 i). Finite propagation speed for the wave equation associated to $H_V$, that is, if $u,v\in L^2(\R^n)$ and $\dist(\text{supp }u,\text{supp }v)=R$ then
\begin{equation}\label{h.2}
\bigl(u, \, \cos t\sqrt{H_V} \, v\bigr)=0, \, \, |t|<R.
\end{equation}
By a result of Coulhon and Sikora \cite{CouS},
\eqref{h.2} is valid when $H_V$ is nonnegative,
self-adjoint and $V\in L^1_{loc}(\R^n)$.  Alternately, one can use arguments from \cite{BSS} to show this for the potentials that
we are considering.

ii). For $q$ satisfying \eqref{1.6} and $\beta$ defined as in \eqref{bump}, we have the Bernstein type (dyadic Sobolev)
estimates
\begin{multline}\label{h.3}
\|\beta(\sqrt{H_V}/\la) u\|_{L^{q}(\R^n)}
\le C\la^{n(\frac12-\frac1{q})}\|u\|_{L^2(\R^n)}, \, \,
\la \ge 1,
\\
\text{and } \, \,
\|\beta_0(\sqrt{H_V})u\|_{L^{q}(\R^n)}\le C\|u\|_{L^2(\R^n)},
\, \, \text{if } \, \beta_0(s)=1-\sum_{k=1}^\infty
\beta(2^{-k}s).
\end{multline}
This, in turn, 
is a direct consequence of \eqref{qm} and the spectral theorem if we let $u=\beta(\sqrt{H_V}/\la) u$ and $\e=\la$ there.

The fact that the conditions $i)$ and $ii)$ implies \eqref{multineq} now follows from the same procedure as in the appendix of \cite{huang2020quasimode}.   Also given \eqref{multineq}, the Littlewood-Paley estimate \eqref{lpest} can be proved using a standard argument involving Radamacher functions (see e.g., \cite[p. 21]{SFIO2}).

In view of the Littlewood-Paley inequalities \eqref{lpest}, by using a standard argument involving Minkowski's inequality (see, e.g., (3.33) in \cite{huang2020quasimode}), the Strichartz estimate in \eqref{strichartz} is equivalent to the following:
\begin{multline}\label{d.15}
\bigl\|e^{-itH_V}\rho(\sqrt{H_V}/\la)f\bigr\|_{L^p_tL^q_x(\R\times \R^n)}
\lesssim \|\rho(\sqrt{H_V}/\la)f\|_{L^2(\R^n)}, \\ \,\,\text{if}\,\, \rho\in C_0^\infty((9/10, 11/10))\,\,\text{is fixed},\,\,\text{and}\,\,\la>\Lambda,
\end{multline}
assuming that $\Lambda=\Lambda(V)$ sufficiently large.

We choose
this interval $(9/10, 11/10)$ as the support of $\rho$ since we are assuming that the 
Littlewood-Paley bump function arising in
Theorem~\ref{lemmm} satisfies
\begin{equation}\label{d.2}
\beta(s)=1 \quad \text{on } \, \,
[3/4,5/4] \quad
\text{and } \, \, \mathrm{supp } \,\beta
\subset (1/2,2).
\end{equation}

Since, by the spectral theorem
\begin{equation}\label{spec}
\|e^{-itH_V}\|_{L^2(\R^n)\to L^2(\R^n)}=1,
\end{equation}
the estimate trivially holds for $p=\infty$ and
$q=2$.  Therefore, by interpolation, 
since we are assuming that $n\ge3$ it 
suffices to prove the estimate for the other endpoint,
i.e.,
\begin{equation}\label{3.17}
\bigl\|e^{-itH_V}\rho(\sqrt{H_V}/\la)f\bigr\|_{L^2_tL^{2n/(n-2)}_x(\R\times \R^n)}
\le C\|\rho(\sqrt{H_V}/\la)f\|_{L^2(\R^n)}, \,\,\text{if}\,\,\la>\Lambda.
\end{equation}

To be able to use \eqref{i.13} and \eqref{i.14}, we note that, after rescaling, \eqref{3.17} is equivalent to the statement that
\begin{multline}\label{3.18}
\bigl\|e^{-it\la^{-1}H_V}\rho(\sqrt{H_V}/\la)f\bigr\|_{L^2_tL^{2n/(n-2)}_x([-\e^{-1},\e^{-1}]\times \R^n)} \\
\le C\la^{1/2}\|\rho(\sqrt{H_V}/\la)f\|_{L^2(\R^n)},\,\,\forall\,\,0<\e<1, \,\,\text{if}\,\,\la>\Lambda.
\end{multline}
where $C=C(n, V)$ is a uniform constant independent of $\e$. 

By \eqref{spec}, this is equivalent to showing that 
whenever $$\eta\in C^\infty((0,1)),\,\,0<\e<1$$ are fixed we have
\begin{multline} \label{3.19}
\bigl\|w\bigr\|_{L^2_tL^{2n/(n-2)}_x(\R\times M)}
\le C\la^{1/2}\|\rho(\sqrt{H_V}/\la)f\|_{L^2(M)}, \,\,\text{if}\,\,\la>\Lambda, \\ \text{with}\,\,\,
w(t,x)= \eta(\e t)\cdot 
e^{-it\la^{-1}H_V}\rho(\sqrt{H_V}/\la)f.
\end{multline}

To proceed we need the following simple lemma.

\begin{lemma}\label{Verror}  Let $n\ge3$ and let $w$ be as in
\eqref{3.19} with $\eta\in C^\infty_0((0,1))$ and $\rho\in C_0^\infty$ as in \eqref{d.15}.
Then for large enough $\la$ and each $N=1,2,\dots$
we have the uniform bounds
\begin{equation}\label{3.20}
\bigl\| \, (I-\beta(-D_t/\la))w \,
\bigr\|_{L^2_tL^{\frac{2n}{n-2}}_x(\R \times \R^n)}
\le C_N \la^{-N}\|\rho(\sqrt{H_V}/\la)f\|_{L^2(\R^n)}.
\end{equation}
\end{lemma}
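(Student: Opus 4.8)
The plan is to show that $w$ has time-Fourier transform concentrated at $\tau\approx-\la$, precisely the band where $\beta(-D_t/\la)$ acts as the identity, so that $I-\beta(-D_t/\la)$ annihilates $w$ up to errors of size $O(\la^{-N})$. Put $g=\rho(\sqrt{H_V}/\la)f$ and let $\{E_\mu\}$ be the spectral resolution of $H_V$; since $\rho$ is supported in $(9/10,11/10)$ and $H_V\ge0$, the measure $dE_\mu g$ is supported in $\mu\in I_\la:=(\tfrac{81}{100}\la^2,\tfrac{121}{100}\la^2)$. Writing $w(t,x)=\eta(\e t)\int e^{-it\la^{-1}\mu}\,dE_\mu g(x)$ and taking the Fourier transform in $t$ (legitimate since $\eta(\e\cdot)$ is compactly supported and $e^{-it\la^{-1}H_V}$ is unitary on $L^2$) gives
$$\widehat w(\tau,\cdot)=\Phi_{\tau,\la}(H_V)g,\qquad \Phi_{\tau,\la}(\mu):=\e^{-1}\widehat\eta\bigl(\tfrac{\tau+\la^{-1}\mu}{\e}\bigr),$$
so that $(I-\beta(-D_t/\la))w$ has time-Fourier transform $\bigl(1-\beta(-\tau/\la)\bigr)\Phi_{\tau,\la}(H_V)g$.

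The only step needing care is an elementary separation of supports. Since $\beta\equiv1$ on $[3/4,5/4]$, the condition $1-\beta(-\tau/\la)\ne0$ forces $\tau\notin[-\tfrac54\la,-\tfrac34\la]$, whereas $\la^{-1}\mu\in(\tfrac{81}{100}\la,\tfrac{121}{100}\la)$ for $\mu\in I_\la$; a short two-case argument then yields $|\tau+\la^{-1}\mu|\ge\max\bigl(\tfrac1{25}\la,\,|\tau|-2\la\bigr)$ for all such $(\tau,\mu)$. As $\widehat\eta$ is Schwartz and $0<\e<1$, this gives, for every $M$,
$$\bigl|1-\beta(-\tau/\la)\bigr|\,\sup_{\mu\in I_\la}\bigl|\Phi_{\tau,\la}(\mu)\bigr|\le C_M\bigl(\max(\tfrac1{25}\la,\,|\tau|-2\la)\bigr)^{-M},$$
the square of which integrates to at most $C_M\la^{1-2M}$ over $\tau\in\R$.

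To assemble the bound, note that $q=\tfrac{2n}{n-2}$ satisfies \eqref{1.6} for every $n\ge3$, and $\Phi_{\tau,\la}(H_V)g$ is spectrally localized to $\sqrt{H_V}/\la\in(9/10,11/10)$, where $\beta(\cdot/\la)\equiv1$; hence the Bernstein-type estimate \eqref{h.3} applies and, combined with the spectral theorem bound $\|\Phi_{\tau,\la}(H_V)g\|_{L^2}\le\sup_{\mu\in I_\la}|\Phi_{\tau,\la}(\mu)|\,\|g\|_{L^2}$, produces a pointwise-in-$\tau$ estimate for $\|\bigl(1-\beta(-\tau/\la)\bigr)\Phi_{\tau,\la}(H_V)g\|_{L^q_x}$. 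Feeding this into Plancherel in $t$ together with Minkowski's inequality (valid since $q\ge2$),
$$\bigl\|(I-\beta(-D_t/\la))w\bigr\|_{L^2_tL^q_x}\lesssim\Bigl\|\bigl\|\bigl(1-\beta(-\tau/\la)\bigr)\Phi_{\tau,\la}(H_V)g\bigr\|_{L^q_x}\Bigr\|_{L^2_\tau}\le C_M\la^{n(1/2-1/q)+\frac12-M}\|g\|_{L^2},$$
and choosing $M$ large relative to $N$ completes the proof, since $\|g\|_{L^2}=\|\rho(\sqrt{H_V}/\la)f\|_{L^2}$.

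I do not expect a genuine obstacle: the argument is simply a matter of commuting the functional calculus of $H_V$ with the Fourier transform in $t$ and exploiting the frequency separation above, after which the already-established dyadic Sobolev bound \eqref{h.3} and Plancherel do the rest. The one thing that must be gotten right is the numerology of the cutoffs, which is exactly why $\rho$ was taken supported in $(9/10,11/10)$ in \eqref{d.15} and $\beta\equiv1$ on $[3/4,5/4]$ in \eqref{d.2}.
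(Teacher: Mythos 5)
Your overall strategy matches the paper's: express the time Fourier transform of $(I-\beta(-D_t/\la))w$ through the functional calculus of $H_V$, exploit the separation between the support of $1-\beta(-\tau/\la)$ and the spectral localization of $\rho(\sqrt{H_V}/\la)f$ to get rapid decay in $\la$, and then assemble via an $L^q_x$ Sobolev-type bound and an $L^2_\tau$ integral. Your numerology in the two-case support separation argument is correct.

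However, there is a genuine gap in the last displayed inequality. You assert
$$\bigl\|(I-\beta(-D_t/\la))w\bigr\|_{L^2_tL^q_x}\lesssim\Bigl\|\bigl\|(1-\beta(-\tau/\la))\Phi_{\tau,\la}(H_V)g\bigr\|_{L^q_x}\Bigr\|_{L^2_\tau}$$
``by Plancherel and Minkowski (valid since $q\ge 2$).'' This is not correct as stated: Plancherel in $t$ applies directly only to $L^2_tL^2_x$; to exchange $L^2_tL^q_x$ for $L^q_xL^2_t$ (so that Plancherel can be applied inside) one needs the \emph{inner} exponent $\le$ the outer exponent, i.e. $q\le 2$. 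For $q=\tfrac{2n}{n-2}>2$, Minkowski gives $\|h\|_{L^q_xL^2_t}\le\|h\|_{L^2_tL^q_x}$, which is the wrong direction, and the further swap $\|\widehat h\|_{L^q_xL^2_\tau}\to\|\widehat h\|_{L^2_\tau L^q_x}$ also goes the wrong way. So the displayed chain of inequalities does not follow from the stated tools.

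The fix is simple and uses nothing you have not already introduced: apply the Bernstein estimate \eqref{h.3} \emph{first, pointwise in $t$}. Since $(I-\beta(-D_t/\la))w(t,\cdot)=\tilde\rho(\sqrt{H_V}/\la)(I-\beta(-D_t/\la))w(t,\cdot)$ for a slightly wider $\tilde\rho\in C^\infty_0$ with $\tilde\rho\rho=\rho$ (the $t$-Fourier multiplier commutes with the $H_V$-functional calculus), \eqref{h.3} gives for each $t$
$$\bigl\|(I-\beta(-D_t/\la))w(t,\cdot)\bigr\|_{L^q_x}\lesssim\la^{n(1/2-1/q)}\bigl\|(I-\beta(-D_t/\la))w(t,\cdot)\bigr\|_{L^2_x}.$$
Taking $L^2_t$, you now have $L^2_tL^2_x$, so Plancherel in $t$ (Fubini plus one-dimensional Plancherel) gives $L^2_\tau L^2_x$, to which your spectral-theorem bound $\|\Phi_{\tau,\la}(H_V)g\|_{L^2_x}\le\sup_{\mu\in I_\la}|\Phi_{\tau,\la}(\mu)|\,\|g\|_{L^2}$ and the $\tau$-integral estimate $\int_\R\bigl|(1-\beta(-\tau/\la))\sup_\mu\Phi_{\tau,\la}(\mu)\bigr|^2d\tau\lesssim_M\la^{1-2M}$ apply directly, yielding exactly the bound you wrote. (The paper achieves the same end via the Sobolev estimate \eqref{d.s}, which has the advantage of being valid for all $\la$, not just $\la\ge\Lambda$; since the lemma only claims the bound for large $\la$ this is immaterial, but it is worth noticing.)
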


This is essentially the Euclidean version of Lemma 3.1 in \cite{huang2020quasimode}, and so we can repeat the arguments there to prove \eqref{3.20} with minor modifications. For completeness, we include the details below.
\begin{proof}  First note the Fourier transform
of $t\to \eta(\e t)e^{-it\la^{-1}\mu^2}$ is
$\e^{-1}\Hat \eta(\frac{\tau+\mu^2/\la}{\e})$.  Consequently,
\begin{equation}\label{d.4}
\bigl(I-\beta(-D_t/\la)\bigr)w(t,x)
=  a(t;\sqrt{H_V}) \rho(\sqrt{H_V}/\la)f(x),
\end{equation}
where
\begin{equation}\label{d.5}
a(t;\mu)=(2\pi)^{-1}
\int_{-\infty}^\infty e^{it\tau}
\e^{-1}\Hat \eta(\frac{\tau+\mu^2/\la}{\e}) \, 
\big(1-\beta(-\tau/\la)\bigr) \, d\tau.
\end{equation}

Since for $q_0=\frac{2n}{n-2}$, by (6.7) in \cite{blair2020uniform}, 
the following Sobolev estimates are valid
\begin{equation}\label{d.s}
\|u\|_{L^{q_0}(\R^n)}\lesssim \| \, (I+H_V)^{1/2}u\, \|_{L^2(\R^n)}.
\end{equation}
Therefore, by the spectral theorem,
\begin{equation}\label{d.6}
\bigl\| a(t;\sqrt{H_V}) \rho(\sqrt{H_V}/\la)f\bigl\|_{L^{q_0}(\R^n)}\lesssim 
\la \bigl\| a(t;\sqrt{H_V}) \rho(\sqrt{H_V}/\la)f \bigl\|_{L^2(\R^n)}.
\end{equation}

Next, since $2\le q_0<\infty$, by Minkowski's inequality and Sobolev's theorem for $\R$ we therefore have
\begin{align*}
\|(I-\beta(-D_t/\la))w\|_{L^2_tL^{q_0}_x(\R\times M)} &\lesssim 
\la \bigl\|  a(t;\sqrt{H_V}) \rho(\sqrt{H_V}/\la)f \bigl\|_{L^{q_0}_tL^2_x(\R\times \R^n)}
\\
&\le \la \bigl\| a(t;\sqrt{H_V}) \rho(\sqrt{H_V}/\la)f \bigl\|_{L^2_xL^{q_0}_t(\R\times \R^n)}
\\
&\lesssim \la \bigl\|  \, |D_t|^{1/2-1/q_0}a(t;\sqrt{H_V}) \rho(\sqrt{H_V}/\la)f \bigl\|_{L^2_xL^2_t(\R\times \R^n)}.
\end{align*}
By the spectral theorem, we conclude that
\begin{multline}\label{d.7}
\|(I-\beta(-D_t/\la)w\|_{L^2_tL^{q_0}_x(\R\times \R^n)}
\\
\lesssim \la \, \bigl(\sup_{\mu\in[9\la/10, 11\la/10]}\bigl\| \, |D_t|^{1/2-1/p}a(t;\mu)\bigr\|_{L^2_t(\R)}\bigr) \cdot \|\rho(\sqrt{H_V}/\la)f\|_{L^2(\R^n)}.
\end{multline}

Next,  by Plancherel's theorem, \eqref{d.2} and \eqref{d.5},
\begin{align*}
\| \, |D_t|^{1/2-1/p}a(t;\mu)\|_{L^2_t(\R)}^2 &=(2\pi)^{-1} \int_{-\infty}^\infty
|\tau|^{1-2/p} \, \bigl|\e^{-1}\Hat \eta(\frac{\tau+\mu^2/\la}{\e})\bigr|^2 \, \bigl|(1-\beta(-\tau/\la))\bigr|^2 \, d\tau
\\
&\lesssim \int_{\tau\notin [-5/\la/4,\, -3\la/4]} |\tau|^{1-2/p} \, |\e^{-1}\Hat \eta(\frac{\tau+\mu^2/\la}{\e})|^2 \, d\tau.
\end{align*}
Note that $|\tau+\mu^2/\la|\approx (|\tau|+\la)$ if if $\tau\notin [-5\la/4,-3\la/4]$ and $\mu\in[9\la/10, 11\la/10]$
, and since $\Hat \eta\in {\mathcal S}(\R)$ the preceding inequality leads to the trivial bounds
\begin{equation}\label{d.8}
\sup \| \, |D_t|^{1/2-1/p}a(t;\mu)\|_{L^2_t(\R)} \lesssim \la^{-N}.
\end{equation}
Combining this inequality with \eqref{d.7} yields \eqref{3.20}.
\end{proof}

We now are able to prove \eqref{3.19}.  For fixed $V\in L^{n/2}(\R^n)$, let us split
$$V=V_{1}+V_{2},
$$
where
\begin{equation}\label{V}
V_{1}(x)=\begin{cases} V(x),\, \, \, \text{if } \, |V(x)|\le \ell 
\, \, \text{and } \, |x|<R \, \, \\ 0, \,\,\text{otherwise}.
\end{cases}
\end{equation}
The assumption $V\in L^{n/2}(\R^n)$ then yields
\begin{equation}\label{d.19}
\|V_{2}\|_{L^{n/2}(\R^n)}=\delta(\ell, R), \quad
\text{with } \, \, \delta(\ell, R)\to 0 \, \, 
\text{as } \, \ell, R \to \infty,
\end{equation}
and we also trivially have
\begin{equation}\label{d.20}
\|V_{1}\|_{L^\infty(\R^n)}\le \ell.
\end{equation}

To use this we note that since $-H_V=\Delta_g-V$
\begin{align*}(i\la\partial_t+\Delta_g+i\la)w
&= (i\la\partial_t-H_V+i\e\la)w +Vw
\\
&= (i\la\partial_t-H_V+i\e\la)w +V_{1}\, w +V_{2} \, w,
\end{align*}
and also $w(0,\cd)=0$.
So we can split
\begin{equation}\label{d.21}
w=\widetilde w+ w_{1} + w_{2},
\end{equation}
where
\begin{equation}\label{d.22}
(i\la\partial_t+\Delta_g+i\e\la)\widetilde w
= (i\partial_t-H_V+i\e\la)w =\widetilde F, \quad 
\tilde w(0,\cd)=0,
\end{equation}
\begin{equation}\label{d.23}
(i\la\partial_t+\Delta_g+i\e\la)w_{1}= V_{1} \, w
=F_{1}, \quad w_{1}(0,\cd)=0,
\end{equation}
and
\begin{equation}\label{d.24}
(i\la\partial_t+\Delta_g+i\e\la)w_{2}= V_{2}\, w
=F_{2}, \quad w_{2}(0,\cd)=0.
\end{equation}
Note that since $w(t,x)=0$, $t\notin (0,\e^{-1})$ each
of the forcing terms $\widetilde F$, $F_{1}$ 
and $F_{2}$ also vanishes for such $t$ which allows
us to apply the estimates in Theorem~\ref{lemmm}
for $\widetilde w$, $w_{1}$ and $w_{2}$.

By \eqref{3.20} and \eqref{d.21} we have for each $N=1,2,\dots$
\begin{align}\label{d.25}
\bigl\|w&\bigr\|_{L^2_tL^{2n/(n-2)}_x(\R\times \R^n)}
\\
&\lesssim \bigl\|\beta(-D_t/\la)w\bigr\|_{L^2_tL^{2n/(n-2)}_x(\R\times \R^n)}+C_N \la^{-N}\|\rho(\sqrt{H_V}/\la)f\|_{L^2(\R^n)} \notag
\\
&\le
\bigl\|\beta(-D_t/\la)\widetilde w\bigr\|_{L^2_tL^{2n/(n-2)}_x(\R\times \R^n)}
+ \bigl\|\beta(-D_t/\la)w_{1}\bigr\|_{L^2_tL^{2n/(n-2)}_x(\R\times \R^n)} \notag
\\
&+
\bigl\|\beta(-D_t/\la)w_{2}\bigr\|_{L^2_tL^{2n/(n-2)}_x(\R\times \R^n)}+ C_N \la^{-N}\|\rho(\sqrt{H_V}/\la)f\|_{L^2(\R^n)}.
\notag
\end{align}
Based on this  we
would obtain \eqref{3.19} if we have the following three inequalities
\begin{equation}\label{d.26}
\bigl\| \beta(-D_t/\la) \widetilde w\bigr\|_{L^2_tL^{2n/(n-2)}_x(\R\times  \R^n)}
\le C\la^{1/2}\|\rho(\sqrt{H_V}/\la)f\|_{L^2( \R^n)},
\end{equation}
as well as
\begin{multline}\label{d.27}
\bigl\| \beta(-D_t/\la) w_{1}\bigr\|_{L^2_tL^{2n/(n-2)}_x(\R\times  \R^n)}
\le \tfrac14\bigl\|w\bigr\|_{L^2_tL^{2n/(n-2)}_x(\R\times \R^n)}\\+C\la^{1/2}\|\rho(\sqrt{H_V}/\la)f\|_{L^2( \R^n)},
\end{multline}
and finally
\begin{equation}\label{d.28}
\bigl\| \beta(-D_t/\la) w_{2}\bigr\|_{L^2_tL^{2n/(n-2)}_x(\R\times  \R^n)}
\le \tfrac14 \bigl\|w\bigr\|_{L^2_tL^{2n/(n-2)}_x(\R\times  \R^n)}.
\end{equation}
Indeed we just combine \eqref{d.25}--\eqref{d.28}
and use a simple bootstrapping argument which is justified since the right side of \eqref{d.28} is finite
by the aforementioned Sobolev estimates for $H_V$.

To prove these three estimates we shall use Theorem~\ref{lemmm},  as we may,  since,
as  mentioned before, the forcing terms in \eqref{d.22},
\eqref{d.23} and \eqref{d.24} obey the support assumption
in \eqref{i.12}.

To prove \eqref{d.26} we note that if $\widetilde F$
is as an \eqref{d.22} then, since $w$ is as in \eqref{3.19}, we have
\begin{multline}\widetilde F(t,x)
= (i\la\partial_t-H_V+i\e\la) \bigl(\eta(\e t)e^{-it\la^{-1}H_V}\rho(\sqrt{H_V}/\la)f(x)\bigr)
\\=i\e \la(\eta'(\e t)+\eta(\e t)) e^{-it\la^{-1}H_V}\rho(\sqrt{H_V}/\la)f(x).\end{multline}
Consequently, we may use the $L^2$-estimate, \eqref{i.13}, in
Theorem~\ref{lemmm} to deduce
that
\begin{align*}
\bigl\| \beta(-D_t/\la) &\widetilde w\bigr\|_{L^2_tL^{2n/(n-2)}_x(\R\times \R^n)}
\\
&\le \la^{-1/2}\e^{-1/2}\| i\e\la(\eta'(\e t)+\eta(\e t))\cdot e^{-it\la^{-1}H_V}\rho(\sqrt{H_V}/\la)f\|_{L^2_{t,x}(\R\times \R^n)}
\\
&\lesssim \la^{1/2} \|\rho(\sqrt{H_V}/\la)f\|_{L^2(\R^n)},
\end{align*}
as desired.

To prove \eqref{d.27} and \eqref{d.28} we shall need to use \eqref{i.14} and \eqref{i.15} in Theorem~\ref{lemmm}.
Note that
$$\frac1{q'}-\frac1q=\frac2n, \quad \text{if } \, \,
q=2n/(n-2), \,  \, q'=2n/(n+2).$$
Consequently if we use \eqref{i.14}, \eqref{d.24}, H\"older's inequality and \eqref{d.19} then
we conclude that we can
fix $\ell_1, R_1$ large enough so that we have
\begin{equation}\label{3.39}
\begin{aligned}\bigl\| \beta(-D_t/\la)  w_{2}\bigr\|_{L^2_tL^{2n/(n-2)}_x(\R\times \R^n)}
&\le C\|V_{2} \, w\|_{L^2_tL^{2n/(n+2)}_x(\R\times  \R^n)}
\\
&\le C\|V_{2}\|_{L^{n/2}( \R^n)} \cdot 
\| w  \|_{L^2_tL^{2n/(n-2)}_x(\R\times  \R^n)}
\\
&\le \tfrac 14 \| w  \|_{L^2_tL^{2n/(n-2)}_x(\R\times  \R^n)},
\end{aligned}
\end{equation}
assuming, as we may, in the last step that, if $\delta(\ell_1, R_1)$ is as
in \eqref{d.19},  $C\delta(\ell_1, R_1)\le \tfrac14$.

Similarly, by repeating the above argument, and using the support condition on $V_1$, we have
\begin{equation}\label{3.40}
\begin{aligned}\bigl\| \beta(-D_t/\la)  w_{1}\bigr\|_{L^2_tL^{2n/(n-2)}_x(\R\times \R^n)}
&\le C\|V_{1} \, w_1\|_{L^2_tL^{2n/(n+2)}_x(\R\times  \R^n)}
\\
&\le C\|V_{1}\|_{L^{n/2}( \R^n)} \cdot 
\| \chi_{R_1}w  \|_{L^2_tL^{2n/(n-2)}_x(\R\times  \R^n)}
\\
&\le C\|V\|_{L^{n/2}( \R^n)}  \| \chi_{R_1}w  \|_{L^2_tL^{2n/(n-2)}_x(\R\times  \R^n)},
\end{aligned}
\end{equation}
where $\chi_{R_1}\in C_0^\infty(\R^n)$ with $\chi_{R_1}\equiv 1$ if $|x|\le R_1$ and vanishes if $|x|>2R_1$.

Although at the moment we do not obtain the right side of \eqref{d.27} as we want, we gain a support function $\chi_R$, which will allow us to use \eqref{i.15}. To see this, 
as in \eqref{d.21}, we shall split
\begin{equation}\label{3.41}
\chi_{R_1}w=\chi_{R_1}\widetilde w+ \chi_{R_1}w_{1} + \chi_{R_1}w_{2},
\end{equation}
with $\widetilde w, w_{1}, w_{2}$ satisfying \eqref{d.22}, \eqref{d.23}, \eqref{d.24} correspondingly, but with a different choice of $\ell, R$ in \eqref{V} which we shall specify later.

By \eqref{d.25} and \eqref{3.40}, we would obtain \eqref{d.27} if we have the following inequalities
\begin{equation}\label{3.42}
\bigl\| \chi_{R_1}\beta(-D_t/\la) \widetilde w\bigr\|_{L^2_tL^{2n/(n-2)}_x(\R\times  \R^n)}
\le C\la^{1/2}\|\rho(\sqrt{H_V}/\la)f\|_{L^2( \R^n)},
\end{equation}
\begin{equation}\label{3.43}
\bigl\|\chi_{R_1} \beta(-D_t/\la) w_{1}\bigr\|_{L^2_tL^{2n/(n-2)}_x(\R\times  \R^n)}
\le \tfrac{1}{8C_1}\bigl\|w\bigr\|_{L^2_tL^{2n/(n-2)}_x(\R\times \R^n)},
\end{equation}
and finally
\begin{equation}\label{3.44}
\bigl\| \chi_{R_1}\beta(-D_t/\la) w_{2}\bigr\|_{L^2_tL^{2n/(n-2)}_x(\R\times  \R^n)}
\le \tfrac{1}{8C_1} \bigl\|w\bigr\|_{L^2_tL^{2n/(n-2)}_x(\R\times  \R^n)},
\end{equation}
with $C_1=C\|V\|_{L^{n/2}( \R^n)}$ as in \eqref{3.40}.

Note that \eqref{3.42} follows directly from \eqref{d.26}, and \eqref{3.44} follows from \eqref{3.39} if we choose $\ell_2, R_2$ large enough such that $R_2\ge R_1$ and 
$C\delta(\ell_2, R_2) <\frac{1}{8C_1}$.

Up until now we have not used the condition that $\la>\Lambda$. We need the condition to obtain \eqref{3.43} with the required small constant on the right side. To see this,
note that by applying \eqref{i.15} for $R=R_2$ and using the fact that $V_1\le \ell_2$ and $\supp V_1\subset B_{R_2}$, we have
\begin{equation}\label{3.45}
\begin{aligned}
\bigl\|\chi_{R_1} \beta(-D_t/\la) w_{1}\bigr\|_{L^2_tL^{2n/(n-2)}_x(\R\times  \R^n)}
&\le C_{R_2}\la^{-1/2} \bigl\|V_1w\bigr\|_{L^2_tL^2_x(\R\times \R^n)}\\
&\le C_{R_2}R_2\ell_2\la^{-1/2} \bigl\|w\bigr\|_{L^2_tL^{2n/(n-2)}_x(\R\times \R^n)}.
\end{aligned}
\end{equation}
Thus, if we choose $\Lambda$ such that $\Lambda^2=8C\|V\|_{L^{n/2}( \R^n)}C_{R_2}R_2\ell_2$, we obtain \eqref{3.43} for $\la>\Lambda$, which complete the proof of \eqref{3.19}.

The rest of the paper is devoted to the proof of Theorem~\ref{lemmm}, with a focus on the Fourier analysis related to the standard Laplacian $\Delta$ or free
Schr\"odinger equation in Euclidean space. In other words, we can assume $V\equiv 0$ from now on.

To proceed, if $\beta$ is as in \eqref{bump}, let us define
``wider cutoffs" that we shall also use as follows
\begin{equation}\label{l.1}
\tb(s)=\sum_{|j|<10}\beta(2^{-j}s)\in C^\infty_0((2^{-10},
2^{10})).
\end{equation}
For future use, note that
\begin{equation}\label{l.2}
\tb(s)=1 \quad \text{on } \, \, (1/4,4).
\end{equation}

For fixed $0<\e<1$, our first estimate concerns the operator 
\begin{equation}\label{l.3}
U(t)=\1_+(t) \tb(P/\la) e^{it\la^{-1}\Delta}e^{-\e t},
\end{equation}
where $\1_+(s)=\1_{[0,+\infty)}(s)$ denote the Heaviside function and $P=\sqrt{-\Delta}$.
For later use,
let us note that we can rewrite this operator.
Indeed,
if we recall that
\begin{equation*}
(2\pi)^{-1}\int_{-\infty}^\infty \frac{e^{it\tau}}{i\tau +\e} \, d\tau = \1_+(t)e^{-\e t},
\end{equation*}
we deduce that  
\begin{equation}\label{l.4}
U(t)f(x)=\frac{i\la}{2\pi}
\int_{-\infty}^\infty 
\frac{e^{it\tau}}{-\la\tau + \Delta+i\e\la}
\, \tb(P/\la)f(x) \, d\tau.
\end{equation}
Also, if we regard $U$ as an operator sending functions
of $x$ into functions of $x,t$, then its adjoint is the operator
\begin{equation}\label{l.5}
U^*F(x)=\int_0^\infty e^{-\e s}\bigl(e^{-is\la^{-1}\Delta}
\tb(P/\la) F(s,\cd )\bigr)(x) \, ds.
\end{equation}
Consequently,
\begin{multline}\label{l.6}
\int U(t)U^*(s)F(s,x)\, ds
\\
=\1_+(t) \int_0^\infty
\Bigl(e^{i(t-s)\la^{-1}\Delta}e^{-\e(t-s)}
\tb^2(P/\la) e^{-2\e s}F(s,\cd)\Bigr)(x) \, ds.
\end{multline}
Note also that if, say,
\begin{equation}\label{l.7}
F(t,x)=0, \quad t\notin [0,\e^{-1}],
\end{equation}
then
 the solution to the scaled
inhomogeneous Schr\"odinger equation with damping term
\begin{equation}\label{l.8}
(i\la \partial_t+\Delta_g+i\e\la)w(t,x)=F(t,x), \quad
w(0,\cd)=0
\end{equation}
is given by
\begin{multline}\label{l.9}
w(t,x)=(i\la)^{-1}\int_0^t 
 \bigl(\Bigl(e^{i(t-s)\la^{-1}\Delta}e^{-\e(t-s)}
F(s,x)\bigr) \, ds
\\
=(2\pi)^{-1}\int_0^{\e^{-1}}\int_{-\infty}^\infty
\frac{e^{i(t-s)\tau}}{-\la\tau+\Delta+i\e\la}
F(s,\cd)(x)\, d\tau ds.
\end{multline}
Thus, since $w(t,\cd)=0$ for $t<0$, it follows
from \eqref{l.4}, \eqref{l.5} and \eqref{l.9}
that  
\begin{multline}\label{l.10}
\tb^2(P/\la)w(t,x) =(i\la)^{-1}\int_0^t
U(t)U^*(s) \bigl(e^{2\e s}F(s,\cd)\bigr)(x) \, ds
\\
= (2\pi)^{-1}\int_0^{\e^{-1}}\int_{-\infty}^\infty
\frac{e^{i(t-s)\tau}}{-\la\tau+\Delta+i\e\la}
\tb^2(P/\la)F(s,\cd)(x)\, d\tau ds.
\end{multline}

Using these formulas, we claim that we can use the Keel-Tao \cite{KT} theorem to
deduce the following.

\begin{proposition}\label{mainprop}  Suppose that
$F$ satisfies the support assumption in \eqref{l.7}
and that $w$ solves \eqref{l.8}.  Then for $\la\ge1$ we have
\begin{equation}\label{l.11}
\bigl\|\tb^2(P/\la)w\bigr\|_{L^2_tL^{\frac{2n}{n-2}}_x(\R\times \R^n)}
\lesssim \la^{-1/2}\e^{-1/2}\|F\|_{L^2_{t,x}([0,\e^{-1}]\times \R^n)},
\end{equation}
and also
\begin{equation}\label{l.12}
\bigl\|\tb^2(P/\la)w\bigr\|_{L^2_tL^{\frac{2n}{n-2}}_x(\R\times \R^n)}
\lesssim 
\|F\|_{L^{2}_tL^{\frac{2n}{n+2}}_x([0,\e^{-1}]\times \R^n)}.
\end{equation}
Additionally, if for all times $t$, $\supp F(t, \cdot)\subset B_R$, with $B_R$ being a fixed ball of radius $R\ge1$ in $\R^n$ centered at origin,
we have 
\begin{equation}\label{l.13}
\bigl\|\tb^2(P/\la)w\bigr\|_{L^2_tL^{\frac{2n}{n-2}}_x(\R\times B_R)}
\le C_R \la^{-1/2}
\|F\|_{L^{2}_tL^{2}_x([0,\e^{-1}]\times \R^n)}.
\end{equation}
\end{proposition}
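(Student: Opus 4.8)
The plan is to deduce all three bounds from the Keel--Tao \cite{KT} abstract Strichartz machinery applied to the operator $U(t)$ of \eqref{l.3} after a time rescaling, the last bound \eqref{l.13} requiring in addition a local smoothing (limiting absorption) estimate for $-\Delta$. First I would record two building blocks for $V(\tau):=U(\la\tau)=\1_+(\tau)\tb(P/\la)e^{i\tau\Delta}e^{-\e\la\tau}$. Since $e^{i\tau\Delta}$ is unitary, $|\1_+(\tau)e^{-\e\la\tau}|\le1$, and $\tb(P/\la)$ is bounded on every $L^p(\R^n)$ uniformly in $\la$ (its kernel is $\la^n$ times a fixed Schwartz function of $\la x$), one has the energy bound $\|V(\tau)f\|_{L^2(\R^n)}\lesssim\|f\|_{L^2(\R^n)}$; and since $V(\tau)V^*(\sigma)=e^{-\e\la(\tau+\sigma)}\tb^2(P/\la)e^{i(\tau-\sigma)\Delta}$, the free dispersive estimate \eqref{disp} together with the $L^\infty\to L^\infty$ bound for $\tb^2(P/\la)$ gives $\|V(\tau)V^*(\sigma)\|_{L^1(\R^n)\to L^\infty(\R^n)}\lesssim|\tau-\sigma|^{-n/2}$, \emph{uniformly in $\e$ and $\la$} (the damping being $\le1$). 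As $n\ge3$, the pair $(2,\tfrac{2n}{n-2})$ is Keel--Tao admissible with decay exponent $n/2$, so we get $\|V(\tau)f\|_{L^2_\tau L^{2n/(n-2)}_x}\lesssim\|f\|_{L^2}$ and the retarded bounds
\begin{equation*}
\Bigl\|\int_{0<\sigma<\tau}V(\tau)V^*(\sigma)G(\sigma)\,d\sigma\Bigr\|_{L^2_\tau L^{2n/(n-2)}_x}\lesssim\|G\|_{L^2_\sigma L^{2n/(n+2)}_x}\quad\text{and}\quad\lesssim\|G\|_{L^1_\sigma L^2_x},
\end{equation*}
the latter from pairing with the $(\infty,2)$ endpoint.

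Next I would feed \eqref{l.10} into these: with $G(s)=e^{2\e s}F(s)$, supported in $s\in[0,\e^{-1}]$, and $U(t)U^*(s)=V(t/\la)V^*(s/\la)$, the substitution $s=\la\sigma$ turns \eqref{l.10} into $\tb^2(P/\la)w(\la\tau)=-i\int_{0<\sigma<\tau}V(\tau)V^*(\sigma)\widetilde G(\sigma)\,d\sigma$ with $\widetilde G(\sigma)=G(\la\sigma)$. The time-dilation Jacobian gives $\|\tb^2(P/\la)w\|_{L^2_tL^{2n/(n-2)}_x}=\la^{1/2}\bigl\|\tb^2(P/\la)w(\la\,\cdot)\bigr\|_{L^2_\tau L^{2n/(n-2)}_x}$; moreover $\|\widetilde G\|_{L^1_\sigma L^2_x}=\la^{-1}\int_0^{\e^{-1}}e^{2\e s}\|F(s)\|_{L^2}\,ds\lesssim\la^{-1}\e^{-1/2}\|F\|_{L^2_{t,x}}$ by Cauchy--Schwarz on an interval of length $\e^{-1}$, while $\|\widetilde G\|_{L^2_\sigma L^{2n/(n+2)}_x}\lesssim\la^{-1/2}\|F\|_{L^2_tL^{2n/(n+2)}_x}$ since $e^{2\e s}=O(1)$ there. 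Combining the Jacobian factor with the two retarded bounds yields \eqref{l.11} and \eqref{l.12} respectively.

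The estimate \eqref{l.13} is the main new point; it does \emph{not} follow from \eqref{l.12}, which (by H\"older in $x$, as $\supp F(t,\cdot)\subset B_R$) only gives $\|\tb^2(P/\la)w\|_{L^2_tL^{2n/(n-2)}_x(\R\times B_R)}\lesssim_R\|F\|_{L^2_{t,x}}$, i.e.\ \eqref{l.13} without the gain $\la^{-1/2}$. To produce that gain I would use \eqref{l.10} in the form $\tb^2(P/\la)w(t)=(i\la)^{-1}e^{-\e t}\tb^2(P/\la)v(t)$, $v(t)=\int_0^te^{i(t-s)\la^{-1}\Delta}e^{\e s}F(s)\,ds$, so that $v$ solves $(i\la\partial_t+\Delta)v=i\la e^{\e t}F$ with $v(0,\cdot)=0$, and after $t=\la\tau$ one has $\widetilde v(\tau):=v(\la\tau)\sim\la\int_0^\tau e^{i(\tau-\sigma)\Delta}\widetilde F(\sigma)\,d\sigma$ with $\widetilde F(\sigma)=e^{\e\la\sigma}F(\la\sigma)$ still supported in $B_R$. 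The key estimate is then the combination of the endpoint Strichartz bound on the output with dual local smoothing on the input: for forcing terms supported in a fixed ball and (after inserting $\tb^2(P/\la)$, modulo $O(\la^{-N})$ spatial tails) localized to frequencies $\sim\la$,
\begin{equation*}
\Bigl\|\chi_R\int_0^\tau e^{i(\tau-\sigma)\Delta}\tb^2(P/\la)\widetilde F(\sigma)\,d\sigma\Bigr\|_{L^2_\tau L^{2n/(n-2)}_x}\lesssim_R\bigl\|\,|D|^{-1/2}\tb^2(P/\la)\widetilde F\,\bigr\|_{L^2_{\sigma,x}}\lesssim\la^{-1/2}\|\widetilde F\|_{L^2_{\sigma,x}},
\end{equation*}
where $\chi_R\in C_0^\infty(\R^n)$ equals $1$ on $B_R$. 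Tracking the factors — the $(i\la)^{-1}$, the extra $\la$ in $\widetilde v$, the $\la^{-1/2}$ from $|D|^{-1/2}$, the two partially cancelling time Jacobians, and $\|\widetilde F\|_{L^2_{\sigma,x}}\lesssim\la^{-1/2}\|F\|_{L^2_{t,x}}$ — one lands on $\|\tb^2(P/\la)w\|_{L^2_tL^{2n/(n-2)}_x(\R\times B_R)}\lesssim_R\la^{-1/2}\|F\|_{L^2_{t,x}}$, which is \eqref{l.13}.

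The hard part will be this combined Strichartz--local smoothing estimate at the endpoint: since its input and output time exponents are both $2$, the Christ--Kiselev lemma does not pass from the non-retarded version — which factors at once as endpoint Strichartz applied after the dual local smoothing bound $\|\int_\R e^{-i\sigma\Delta}H(\sigma)\,d\sigma\|_{L^2}\lesssim_R\|\,|D|^{-1/2}H\|_{L^2_{\sigma,x}}$ for $H$ supported in $B_R$ — to the retarded one. This is circumvented by running a Keel--Tao-type bilinear argument directly (or by invoking the known endpoint inhomogeneous Strichartz estimates with a local-smoothing input space); one also checks that all constants are uniform in the damping parameter $\e$, which is automatic since $e^{-\e\la(\tau+\sigma)}\le1$ does not affect the dispersive or energy bounds and $\e$ enters the final inequalities only through the time-support $[0,\e^{-1}]$ of $F$. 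The remaining pieces — the frequency-cutoff bookkeeping, the $O(\la^{-N})$ spatial tails, and the scaling arithmetic — are routine, as in \cite{huang2020quasimode}.
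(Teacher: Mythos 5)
Your derivation of \eqref{l.11} and \eqref{l.12} reproduces the paper's argument: both rescale to $V(\tau)=U(\la\tau)$, verify the $L^2$ energy bound and the $|\tau-\sigma|^{-n/2}$ dispersive bound, apply Keel--Tao to get the homogeneous endpoint estimate and its retarded counterparts (pairing $(2,\tfrac{2n}{n-2})$ with itself for \eqref{l.12} and with $(\infty,2)$ for \eqref{l.11}), and undo the rescaling through the representation \eqref{l.10}; the $\e^{-1/2}$ in \eqref{l.11} comes from Cauchy--Schwarz over the time interval $[0,\e^{-1}]$. This part is fine and matches the paper.

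For \eqref{l.13} you take a genuinely different route, and that route has a real gap. You correctly observe that \eqref{l.12} plus H\"older only gives \eqref{l.13} \emph{without} the $\la^{-1/2}$ gain, and you propose to recover the gain from a retarded inhomogeneous estimate pairing the Strichartz endpoint with a local-smoothing input. But as you yourself note, at the $L^2_t\to L^2_t$ endpoint Christ--Kiselev fails, so the non-retarded version (dual local smoothing followed by endpoint Strichartz) does \emph{not} automatically upgrade to the retarded one; you then assert, without proof, that a ``Keel--Tao-type bilinear argument'' or ``known endpoint inhomogeneous Strichartz estimates with a local-smoothing input space'' will close the gap. That estimate is precisely the nontrivial content, and as written the proof of \eqref{l.13} is incomplete until it is supplied.

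The paper sidesteps this entirely by a more elementary kernel argument. Writing the Schwartz kernel $K(t,s,x,y)$ of $\tb^2(P/\la)$ composed with the damped propagator as an explicit oscillatory integral \eqref{l.19}, one observes that when $x,y\in B_R$ and $|\xi|\approx\la$, non-stationary phase in $\xi$ gives $|K(t,s,x,y)|\le C_N\la^n(1+\la|x-y|+\la|t-s|)^{-N}$ for $|t-s|>2^{11}R$, so the operator is essentially near-diagonal in $t-s$ at scale $R$. Splitting $K=K_1+K_2$ via a unit-scale time partition $\{\eta_j\}$ at scale $R$, the far-diagonal $K_2$ is an $O(\la^{-N})$ error, while on each block of $K_1$ one applies the homogeneous Strichartz bound \eqref{l.15} ($L^2_x\to L^2_tL^{2n/(n-2)}_x$, costing $\la^{1/2}$), Cauchy--Schwarz in time over a window of length $O(R)$ (rather than $\e^{-1}$), and then almost-orthogonality in $j$; the factor $(\la R)^{1/2}$ divided by the overall $\la$ from \eqref{l.18} gives exactly $C_R\la^{-1/2}$. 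You might either carry out the endpoint retarded Strichartz--smoothing estimate you invoke, or adopt this kernel decomposition, which is shorter, self-contained, and manifestly uniform in $\e$.
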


\begin{proof}
Let
\begin{equation}\label{l.14}
V(t')f(x)= U(\la t')f(x)=\1_+(t')e^{-\e\la t'} \tb(P/\la) e^{it'\Delta_g}f(x).
\end{equation}
We then clearly have
\begin{equation*}
\|V(t')\|_{L^2(\R^n)\to L^2(\R^n)}=O(1),
\end{equation*}
and since
\begin{multline*}
V(t')(V(s'))^*f (x)=\\ \1_+(t')\1_+(s')e^{-\e\la (t'+s')} (2\pi)^{-n}\int_{\R^n}\int_{\R^n} e^{i\langle x-y,\xi \rangle+(t'-s')|\xi|^2}\tb^2(|\xi|/\la) f(y) d\xi dy,
\end{multline*}
by stationary phase methods, we have
$$\|V(t')(V(s'))^*\|_{L^1(\R^n)\to L^\infty(\R^n)}.
\lesssim |t'-s'|^{-n/2}.$$

We can use the Keel-Tao theorem along with these
two inequalities to deduce 
that
$$\|V(t')f\|_{L^2_{t'}L^{{\frac{2n}{n-2}}}_x(\R\times \R^n)}
\lesssim \|f\|_{L^2(\R^n)},
$$
as well
as 
$$\Bigl\|\int_0^{t'}V(t')V^*(s')G(s',\cd)\, ds'
\, \Bigr\|_{L^2_{t'}L^{\frac{2n}{n-2}}_x(\R\times \R^n)}
\lesssim \|G\|_{L^{2}_{t}L^{{\frac{2n}{n+2}}}_x(\R\times
\R^n)},
$$
and
$$\Bigl\| \int_0^\infty V^*(s')G(s', \cd)\, ds'
\Bigr\|_{L^2(\R^n)}
\lesssim  \|G\|_{L^{2}_tL^{{\frac{2n}{n+2}}}_x(\R\times
\R^n)}.
$$
Using \eqref{l.14} we deduce that these inequalities
are equivalent to
\begin{equation}\label{l.15}
\|U(t)f\|_{L^2_tL^{\frac{2n}{n-2}}_x(\R\times \R^n)}
\lesssim \la^{1/2}\|f\|_{L^2(\R^n)},
\end{equation}
as well as
\begin{equation}\label{l.16}
\Bigl\|\int_0^t U(t)U^*(s)H(s,\cd)\, ds
\Bigr\|_{L^2_tL^{\frac{2n}{n-2}}_x(\R\times \R^n)}
\\
\lesssim \la \,
\|H\|_{L^{2}_tL^{\frac{2n}{n+2}}_x(\R\times
\R^n)}
\end{equation}
and
\begin{equation}\label{l.17}
\Bigl\|\int_0^\infty U^*(s)H(s,\cd)\, ds\Bigr\|_{L^2(\R^n)}
\lesssim \la^{1/2}\|H\|_{L^{2}_tL^{\frac{2n}{n+2}}_x(\R\times
\R^n)},
\end{equation}
respectively.

Using \eqref{l.10} with $H=e^{2\e s}F$ along with \eqref{l.16} we obtain \eqref{l.12}
since $\|H\|_{L^2_tL^{\frac{2n}{n-2}}_x}\approx \|F\|_{L^2_tL^{\frac{2n}{n-2}}_x}$
due to \eqref{l.7}. Moreover, since $\|U^*(s)\|_{L^2(\R^n)\to L^2(\R^n)}=O(1)$,
using \eqref{l.10} along with \eqref{l.15}
we find that if $H=e^{2\e s}F$
\begin{align*}
\bigl\|\tb^2(P/\la)w\bigr\|_{L^2_tL^{\frac{2n}{n-2}}_x(\R\times \R^n)}
&\le \la^{-1} \int_0^{\e^{-1}}
\bigl\|\1_+(t-s) U(t)U^*(s)H(s, \cd)
\bigr\|_{L^2_tL^{\frac{2n}{n-2}}_x(\R\times \R^n)}\, ds
\\
&\lesssim
\la^{-1/2}\int_0^{\e^{-1}}\|U^*(s)H(s, \cd)\|_{L^2_x}\, ds
\\
&\lesssim \la^{-1/2}\int_0^{\e^{-1}}\|F(s,\cd)\|_{L^2_x}\, ds
\le \la^{-1/2}\e^{-1/2}\|F\|_{L^2_{t,x}([0,\e^{-1}]\times \R^n)},
\end{align*}
as desired, which is \eqref{l.11}.

To prove \eqref{l.14}, note that if we assume
\begin{equation}\label{l.18}
\tb^2(P/\la)w(t,x)=(i\la)^{-1}\int_{\R^n}\int_{\R} K(t,s,x,y) F(s,y)ds dy,
\end{equation}
then by \eqref{l.10}, we have
\begin{equation}\label{l.19}
K(t,s,x,y)=(2\pi)^{-n}\1_{[0,t]}(s)e^{-\e(t-s)}\int_{\R^n} e^{i\langle x-y,\xi \rangle+(t-s)\la^{-1}|\xi|^2}\tb^2(|\xi|/\la) d\xi.
\end{equation}
Since, in proving \eqref{l.13}, we only consider the case where $x,y \in B_R$, by the definition of $\tb$ in \eqref{l.1} and using integration by parts, 
\begin{equation}\label{error1}
|K(t,s,x,y)|\le C_N \la^n (1+\la|x-y|+\la|t-s|)^{-N},  \,\,\,\text{if}\,\,\, |t-s|>2^{11}R.
\end{equation}

Now let us choose a function $\eta\in C_0^\infty (\R)$ satisfying $\eta(t)=0$, if $|t|>1$, and $\sum_{j=-\infty}^\infty \eta(t-j)\equiv 1$. Given $R\ge1$,
we shall set
\begin{equation}\label{eta}
\eta_j(t)=\eta_{j,R}(t)=\eta((2^{11}R)^{-1}\cdot t-j),
\end{equation}
and write
\begin{multline}
K(t,s,x,y)=K_1(t,s,x,y)+K_2(t,s,x,y) \\
=\sum_{j,s \in \mathbb{Z}:|j-k|\le 1} \eta_j(t)\eta_k(s)K(t,s,x,y)+\sum_{j,s \in \mathbb{Z}:|j-k|> 1} \eta_j(t)\eta_k(s)K(t,s,x,y).
\end{multline}

As a consequence of \eqref{error1} and \eqref{eta}, we have
\begin{equation}\label{error2}
|K_2(t,s,x,y)|\le C_N \la^n \la^{-N}R^{-N}(1+\la|x-y|+\la|t-s|)^{-N},
\end{equation}
which, after an application of Young's inequality, gives us much better bounds than the right side of \eqref{l.13}.

Hence, it suffices to consider the case where $t, s$ are supported near diagonal, i.e., we need to show that
\begin{equation}\label{l.20}
\bigl\|K_1F\bigr\|_{L^2_tL^{\frac{2n}{n-2}}_x(\R\times B_R)}
\le C_R \la^{1/2}
\|F\|_{L^{2}_tL^{2}_x([0,\e^{-1}]\times \R^n)}.
\end{equation}

Firstly, for fixed $j, k$, using \eqref{l.10} along with \eqref{l.15}, we find that if $H=\eta_k(s)e^{2\e s}F$,
\begin{align*}
\bigl\|\iint \eta_j(t)\eta_k(s)&K(t,s,x,y)F(s,y)dsdy\bigr\|_{L^2_tL^{\frac{2n}{n-2}}_x(\R\times \R^n)} \\
&\le  \int_0^{\e^{-1}}
\bigl\|\1_+(t-s) U(t)U^*(s)H(s, \cd)
\bigr\|_{L^2_tL^{\frac{2n}{n-2}}_x(\R\times \R^n)}\, ds
\\
&\lesssim
\la^{1/2}\int_0^{\e^{-1}}\|U^*(s)H(s, \cd)\|_{L^2_x}\, ds
\\
&\lesssim \la^{1/2}\int_0^{\e^{-1}}\|H(s,\cd)\|_{L^2_x}\, ds
\le \la^{1/2}R^{1/2}\|H\|_{L^2_{t,x}([0,\e^{-1}]\times \R^n)},
\end{align*}
where we used the fact that $H(s,\cdot)$ is supported in a interval of length $\approx 2^{11}R$ in the last inequality.

Since $\eta_j(t)\bar\eta_k(t)=0$ if $|j-k|>1$, by Cauchy Schwartz and the above inequality,
\begin{align*}
\bigl\|\iint K_1(t,s,x, &y)F(s,y)dsdy\bigr\|^2_{L^2_tL^{\frac{2n}{n-2}}_x(\R\times \R^n)} \\
&\le \sum_j \bigl\|\sum_{k:|k-j|\le1}\iint \eta_j(t)\eta_k(s)K(t,s,x,y)F(s,y)dsdy\bigr\|^2_{L^2_tL^{\frac{2n}{n-2}}_x(\R\times \R^n)} \\
&\le \sum_j \sum_{k:|k-j|\le1}\bigl\|\iint \eta_j(t)\eta_k(s)K(t,s,x,y)F(s,y)dsdy\bigr\|^2_{L^2_tL^{\frac{2n}{n-2}}_x(\R\times \R^n)} \\
&\le \sum_j \sum_{k:|k-j|\le1}\la R\|\eta_k(t)e^{2\e t}F(x)\|^2_{L^2_{t,x}([0,\e^{-1}]\times \R^n)} \le \la R \|F\|_{L^{2}_tL^{2}_x([0,\e^{-1}]\times \R^n)},
\end{align*}
which completes the proof of \eqref{l.20}.
\end{proof}

Finally, to prove Theorem~\ref{lemmm} using Proposition~\ref{mainprop}, we shall basically repeat the argument at the end of Section 2 in \cite{huang2020quasimode}.
To proceed, we shall require the following two elementary lemmas which are analogous to Lemma 2.2 and Lemma 2.3 in \cite{huang2020quasimode}.
\begin{lemma}\label{soblemma}  Let $\alpha\in C([0,\infty))$ and $1<p\le 2<q<\infty$.  Then
\begin{equation}\label{m.28}
\|\alpha(P)f\|_{L^q(\R^n)}
\le C_{p,q} \, 
\bigl(\sup_{\mu\ge 0} (1+\mu)^{n(\frac1p-\frac1q)}
|\alpha(\mu)|\bigr) \, \|f\|_{L^p(\R^n)}.
\end{equation}
\end{lemma}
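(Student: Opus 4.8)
The plan is to realize $\alpha(P)$ as a composition $L^p(\R^n)\to L^2(\R^n)\to L^2(\R^n)\to L^q(\R^n)$, using the hypothesis $1<p\le 2<q<\infty$ to split the Sobolev exponent $s:=n(\tfrac1p-\tfrac1q)$ into two \emph{nonnegative} parts. Put
$$s_1=n\bigl(\tfrac1p-\tfrac12\bigr)\ge0,\qquad s_2=n\bigl(\tfrac12-\tfrac1q\bigr)>0,$$
so that $s=s_1+s_2$, and write $M=\sup_{\mu\ge0}(1+\mu)^{n(1/p-1/q)}|\alpha(\mu)|$ for the quantity on the right side of \eqref{m.28}. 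Since $|\alpha(\mu)|\le M(1+\mu)^{-s}$ for all $\mu\ge0$, we have the identity of functions of $P$
$$\alpha(P)=(1+P)^{-s_2}\circ\bigl[(1+P)^{s}\alpha(P)\bigr]\circ(1+P)^{-s_1},$$
valid because all three factors are Borel functions of the self-adjoint operator $P=\sqrt{-\Delta}$ and the product of their symbols is $\alpha$.

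Next I would estimate the three factors in turn. The middle factor $(1+P)^{s}\alpha(P)$ is bounded on $L^2(\R^n)$ with operator norm exactly $\sup_{\mu\ge0}(1+\mu)^{s}|\alpha(\mu)|=M$, by the spectral theorem (equivalently Plancherel). For the two outer factors I would invoke the classical Sobolev embedding, noting that the symbol $(1+|\xi|)^{-\sigma}$ agrees with the Bessel-potential symbol $(1+|\xi|^2)^{-\sigma/2}$ up to a Mikhlin--H\"ormander multiplier, which is bounded on every $L^r(\R^n)$, $1<r<\infty$. Thus $(1+P)^{-s_1}$ maps $L^p(\R^n)$ into $L^2(\R^n)$: this is the embedding $W^{s_1,p}\hookrightarrow L^2$, which holds since $\tfrac12=\tfrac1p-\tfrac{s_1}{n}$ with $0\le s_1<\tfrac np$, and when $p=2$ this factor is simply the identity. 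Likewise $(1+P)^{-s_2}$ maps $L^2(\R^n)$ into $L^q(\R^n)$ by the embedding $W^{s_2,2}\hookrightarrow L^q$, valid since $\tfrac1q=\tfrac12-\tfrac{s_2}{n}$ with $0<s_2<\tfrac n2$ (here $q<\infty$ is exactly what gives $s_2<n/2$). Composing the three bounds yields $\|\alpha(P)f\|_{L^q(\R^n)}\le C_{n,p,q}\,M\,\|f\|_{L^p(\R^n)}$, with a constant depending only on $n,p,q$ through the Sobolev and Mikhlin constants; this is \eqref{m.28}. A routine density argument (first taking $f$ Schwartz) justifies all the manipulations.

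I do not expect a serious obstacle here; the one point that requires care is choosing the right factorization. A more naive route --- writing $\alpha=\sum_{j\ge0}\alpha_j$ with $\alpha_j=\alpha\cdot\beta(\,\cdot/2^j)$ and $\alpha_0=\alpha\beta_0$, and bounding each dyadic piece by Bernstein's inequality together with $\|\alpha_j\|_\infty\lesssim M\,2^{-jn(1/p-1/q)}$ --- produces only $\|\alpha_j(P)f\|_{L^q}\lesssim M\|f\|_{L^p}$ \emph{uniformly} in $j$, with no decay, so the series over $j$ fails to converge. Routing the estimate through $L^2$ is precisely what allows the full ``derivative budget'' $s$ to be spent at once on two genuine Sobolev embeddings rather than scale by scale, and it is exactly the inequalities $p\le2$ and $q>2$ that make both $s_1\ge0$ and $s_2>0$, so that no summation over scales is needed.
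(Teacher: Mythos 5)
Your proposal is correct and is essentially the paper's own argument: the paper dismisses this lemma as ``a direct consequence of Sobolev estimates and spectral theorem,'' and your factorization $\alpha(P)=(1+P)^{-s_2}\circ\bigl[(1+P)^{s}\alpha(P)\bigr]\circ(1+P)^{-s_1}$, with $s_1=n(\tfrac1p-\tfrac12)\ge0$ and $s_2=n(\tfrac12-\tfrac1q)>0$, is precisely the standard implementation of that remark—two Sobolev embeddings on the outer factors and the spectral theorem (Plancherel) supplying the $L^2$ operator norm $\sup_\mu(1+\mu)^s|\alpha(\mu)|$ for the middle one. The side discussion of why a direct dyadic/Bernstein summation would fail is a nice sanity check but not part of the proof.
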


\begin{lemma}\label{freeze}  Suppose that
\begin{equation}\label{m.29}
|K_\la(t,t')|\le \la (1+\la|t-t'|)^{-2}.
\end{equation}
Then if $1\le p\le q\le \infty$ we have
the following uniform bounds for $\la\ge1$
\begin{equation}\label{m.30}
\Bigl\|\, \int K_\la(t,t') \, 
G(t',\cd)\, dt' \, 
\Bigr\|_{L^p_tL^q_x(\R\times \R^n)}
\le C\|G\|_{L^p_tL^q_x(\R\times \R^n)}.
\end{equation}
Also, suppose that 
$$WF(t,x)=
\int_{-\infty}^\infty \int_{\R^n}
K(t,x;t',y) \, F(t',y) \, dy \, dt'
$$
and that for each $t,t'\in \R$ the operator
$$W_{t,t'}f(x) = \int_{\R^n} K(x,t;y,t') f(y) \, dy$$
satisfies
$$\|W_{t,t'}f\|_{L^q(\R^n)}
\le \la(1+\la|t-t'|)^{-2} \, \|f\|_{L^r(\R^n)}
$$
for some $1\le r\le q\le \infty$.  Then if
$1\le s\le p\le \infty$ we have for $\la\ge1$
\begin{equation}\label{m.31}
\|WF\|_{L^p_tL^q_x(\R\times \R^n)}
\le C\la^{\frac1s-\frac1p}
\|F\|_{L^s_tL^r_x(\R\times \R^n)}.
\end{equation}
\end{lemma}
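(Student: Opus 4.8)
To prove Lemma~\ref{freeze}, the plan is to reduce both assertions to a scalar convolution estimate in the time variable, using Minkowski's integral inequality to pull the spatial norm inside the $t'$-integral and then Young's inequality on $\R$, with the whole $\la$-dependence coming from the $L^a_t$-norm of the model kernel $k_\la(\tau)=\la(1+\la|\tau|)^{-2}$. For \eqref{m.30} this is immediate. Writing $H(t,x)=\int K_\la(t,t')G(t',x)\,dt'$ and $g(t')=\|G(t',\cd)\|_{L^q_x(\R^n)}$, Minkowski's inequality in $x$ together with the domination $|K_\la(t,t')|\le k_\la(t-t')$ gives
\begin{equation*}
\|H(t,\cd)\|_{L^q_x(\R^n)}\le\int_{-\infty}^\infty |K_\la(t,t')|\,\|G(t',\cd)\|_{L^q_x(\R^n)}\,dt'\le(k_\la*g)(t),
\end{equation*}
and since $\|k_\la\|_{L^1(\R)}=\int_\R(1+|u|)^{-2}\,du$ is a constant independent of $\la$, Young's inequality yields $\|k_\la*g\|_{L^p_t(\R)}\le C\|g\|_{L^p_t(\R)}$ for every $1\le p\le\infty$; the restriction $p\le q$ in the hypothesis is not actually needed here.

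For \eqref{m.31} the argument runs along the same lines, with the hypothesis on the operators $W_{t,t'}$ playing the role of the pointwise kernel bound. Since $WF(t,\cd)=\int W_{t,t'}\bigl[F(t',\cd)\bigr]\,dt'$, Minkowski's integral inequality in $t'$ and the assumed bound on $W_{t,t'}$ give, with $g(t')=\|F(t',\cd)\|_{L^r_x(\R^n)}$,
\begin{align*}
\|WF(t,\cd)\|_{L^q_x(\R^n)}
&\le\int_{-\infty}^\infty\bigl\|W_{t,t'}[F(t',\cd)]\bigr\|_{L^q_x(\R^n)}\,dt'\\
&\le\int_{-\infty}^\infty\la(1+\la|t-t'|)^{-2}\,\|F(t',\cd)\|_{L^r_x(\R^n)}\,dt'=(k_\la*g)(t).
\end{align*}
Now I would apply Young's inequality with the kernel exponent $a$ determined by $\tfrac1a+\tfrac1s=1+\tfrac1p$, obtaining $\|WF\|_{L^p_tL^q_x(\R\times\R^n)}\le\|k_\la\|_{L^a_t(\R)}\,\|F\|_{L^s_tL^r_x(\R\times\R^n)}$, and conclude with the scaling identity $\|k_\la\|_{L^a_t(\R)}^a=\int_\R\la^a(1+\la|\tau|)^{-2a}\,d\tau=\la^{a-1}\int_\R(1+|u|)^{-2a}\,du$, i.e. $\|k_\la\|_{L^a_t(\R)}=C_a\,\la^{1-1/a}=C_a\,\la^{1/s-1/p}$, which is exactly \eqref{m.31}.

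There is no genuine obstacle in this argument; the only points that deserve a sentence of checking are that the Young exponent $a$ lies in $[1,\infty]$ — forced by $1\le s\le p\le\infty$, since $\tfrac1p\le\tfrac1s$ yields $a\ge1$ and $\tfrac1s\le1\le1+\tfrac1p$ yields $a\le\infty$ — that $\|k_\la\|_{L^a_t(\R)}$ is finite, which is automatic because $a\ge1$ forces $2a>1$, and the endpoint cases $p=\infty$ and $s=1$, covered respectively by $\|k_\la\|_{L^\infty_t(\R)}=\la$ and $\|k_\la\|_{L^1_t(\R)}=C$. Finally, \eqref{m.30} is itself the special case $s=p$, $r=q$ of \eqref{m.31}, in which the factor $\la^{1/s-1/p}$ degenerates to $1$, so one could alternatively present only the second argument.
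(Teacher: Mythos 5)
Your proof is correct. The paper does not present its own proof of this lemma, referring the reader to Theorem~0.3.6 in \cite{SFIO2} and Lemma~2.3 in \cite{huang2020quasimode}; the argument you give — Minkowski's integral inequality to reduce to a scalar convolution in $t$, followed by Young's inequality with $\|k_\la\|_{L^a_t}=C_a\,\la^{1/s-1/p}$ where $1/a+1/s=1+1/p$ — is precisely the standard argument used in those references, and your side remarks (that the constraint $p\le q$ is not needed for \eqref{m.30}, and that \eqref{m.30} is the $s=p$, $r=q$ case of \eqref{m.31}) are accurate.
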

Both lemmas are well known,
The first lemma is a direct consequence of Sobolev estimates and spectral theorem, while the second lemma is essentially Theorem 0.3.6 in \cite{SFIO2}. For more 
details about the proof of the two lemmas, see, i.e., \cite{huang2020quasimode}.

\begin{proof}[Proof of Theorem~\ref{lemmm}]
We first note that the kernel of $\beta(-D_t/\la)$
is $O(\la (1+\la|t-t'|)^{-2})$.  Therefore, by
\eqref{m.30}
$$\|\beta(-D_t/\la) \tb^2(P/\la)w\|_{L^p_tL^q_x(\R\times \R^n)}
\lesssim \| \tb^2(P/\la)w\|_{L^p_tL^q_x(\R\times \R^n)}.
$$
Therefore, if as in Proposition~\ref{mainprop} and
our theorem our forcing term $F$ satisfies \eqref{i.12},
it suffices to show that $\beta(-D_t/\la)(I-\tb^2(P/\la))w$ enjoys the bounds in 
\eqref{i.13}, \eqref{i.14} and \eqref{i.15}.

Recalling \eqref{l.9}, this means that it suffices
to show that
\begin{multline}\label{m.32}
\Bigl\|
\int_0^{\e^{-1}}\int_{-\infty}^\infty
\frac{e^{i(t-s)\tau}}{-\la\tau-P^2+i\e\la}
\beta(-\tau/\la) \, 
\bigl(1-\tb^2(P/\la)\bigr)\, F(s,\cd)\, d\tau ds
\, \Bigr\|_{L^2_tL^{\frac{2n}{n-2}}_x(\R\times \R^n)}
\\
\lesssim \la^{-1/2}\|F\|_{L^2_{t,x}(\R\times \R^n)},
\end{multline}
as well as
\begin{multline}\label{m.33}
\Bigl\|
\int_0^{\e^{-1}}\int_{-\infty}^\infty
\frac{e^{i(t-s)\tau}}{-\la\tau-P^2+i\e\la}
\beta(-\tau/\la) \, 
\bigl(1-\tb^2(P/\la)\bigr)\, F(s,\cd)\, d\tau ds
\, \Bigr\|_{L^2_tL^{\frac{2n}{n-2}}_x(\R\times \R^n)}
\\
\lesssim 
\|F\|_{L^{2}_tL^{\frac{2n}{n+2}}_x(\R\times \R^n)}.
\end{multline}
Actually, \eqref{m.32} also implies that $\beta(-D_t/\la)(I-\tb^2(P/\la))w$ enjoys a better bound than $\tb^2(P/\la)w$ since we do not have the $\e^{-1/2}$ factor on 
the right side of \eqref{i.13}.

To use Lemma~\ref{freeze} set
$$\alpha(t,s;\mu)
= \int_{-\infty}^\infty
\frac{e^{i(t-s)\tau}}{-\la\tau-\mu^2+i\e\la}
\beta(-\tau/\la) \, 
\bigl(1-\tb^2(\mu/\la)\bigr) \, d\tau,$$
and note that, by \eqref{l.2} and the support
properties of $\beta$ we have
for $j=0,1,2$
$$
\la \,
\bigl|
\la^{j} \,
\partial_\tau^j \bigl((1-\tb^2(\mu/\la)\bigr)  \beta(-\tau/\la)
(-\la\tau-\mu^2+i\e\la)^{-1}\bigr)\, \bigr|
\lesssim \la (\mu^2+\la^2)^{-1},$$
which, by a simple  integration parts argument, translates to the bound
$$|\alpha(t,s;\mu)|\lesssim 
\la(1+\la|t-s|)^{-2} \cdot (\mu^2+\la^2)^{-1}.$$

If we use Lemma~\ref{soblemma} we deduce from this that the 
``frozen operators" 
$$T_{t,s}h(x)
=\int_{-\infty}^\infty
\frac{e^{i(t-s)\tau}}{-\la\tau-P^2+i\e\la}
\beta(-\tau/\la) \, 
\bigl(1-\tb^2(P/\la)\bigr)h(x)\, d\tau,$$
satisfy
\begin{equation}\label{m.34}
\|T_{t,s}h\|_{L^{\frac{2n}{n-2}}(\R^n)}
\lesssim \la(1+\la|t-s|)^{-2}
\cdot \la^{-2+1}\|h\|_{L^{2}(\R^n)}
\end{equation}
as well as
\begin{equation}\label{m.35}
\|T_{t,s}h\|_{L^{\frac{2n}{n-2}}(\R^n)}
\lesssim \la(1+\la|t-s|)^{-2}\|h\|_{L^{\frac{2n}{n+2}}(\R^n)}
\end{equation}
due to the fact that $n(\frac12-\frac{n-2}{2n})=1$
 and $n(\frac{n+2}{2n}-\frac{n-2}{2n})=2$.

If we combine \eqref{m.34} and \eqref{m.31}, we conclude
that the left side of \eqref{m.32} is dominated by
$\la^{-1} \|F\|_{L^2_{t,x}(\R\times \R^n)},
$
which is better than the bounds posited in \eqref{m.32}
by a factor of $\la^{-1/2}$.
Similarly, if we combine \eqref{m.35} and \eqref{m.31}, we find that the left side of \eqref{m.33} is dominated
by
$ \|F\|_{L^{2}_t
L^{\frac{2n}{n+2}}_x(\R\times \R^n)},
$ which completes
the proof.
\end{proof}

\bibliography{02-10}
\bibliographystyle{abbrv}
\end{document}